\newcommand*{\tree}{\mathcal{T}_{d,k}}
\newcommand*{\treed}{\mathcal{T}_d}
\newcommand*{\bord}{\partial_{\infty} \mathcal{T}_{d,k}}
\newcommand*{\bordd}{\partial_{\infty} \mathcal{T}_d}
\newcommand*{\aut}{\mathrm{Aut}(\mathcal{T}_{d,k})}
\newcommand*{\autd}{\mathrm{Aut}(\mathcal{T}_{d})}
\newcommand*{\aaut}{\mathrm{AAut}(\mathcal{T}_{d,k})}
\newcommand*{\aautd}{\mathrm{AAut}(\mathcal{T}_d)}
\newcommand*{\Daaut}{\mathrm{AAut}_{D}(\mathcal{T}_{d,k})}
\newcommand*{\Gaaut}{\mathrm{AAut}_G(\mathcal{T}_d)}
\newcommand*{\Haaut}{\mathrm{AAut}_H(\mathcal{T}_d)}
\newcommand*{\rpc}{/ \! \! /}
\newcommand*{\sch}{Schlichting }
\title{Compact presentability of tree almost automorphism groups}
\author{Adrien Le Boudec}
\address{Laboratoire de Math\'ematiques, b\^atiment 425, Universit\'e Paris-Sud 11, 91405 Orsay, France}
\email{adrien.le-boudec@math.u-psud.fr}
\date{June 16, 2016}
\theoremstyle{plain}
\newtheorem{thm}{Theorem}[section]
\newtheorem{prop}[thm]{Proposition}
\newtheorem{cor}[thm]{Corollary}
\newtheorem{lem}[thm]{Lemma}
\theoremstyle{definition}
\newtheorem{defi}[thm]{Definition}
\newtheorem{ex}[thm]{Example}
\newtheorem{rmq}[thm]{Remark}
\begin{document}

\maketitle

\begin{abstract}
We establish compact presentability, i.e.\ the locally compact version of finite presentability, for an infinite family of tree almost automorphism groups. Examples covered by our results include Neretin's group of spheromorphisms, as well as the topologically simple group containing the profinite completion of the Grigorchuk group constructed by Barnea, Ershov and Weigel.

We additionally obtain an upper bound on the Dehn function of these groups in terms of the Dehn function of an embedded Higman-Thompson group. This, combined with a result of Guba, implies that the Dehn function of the Neretin group of the regular trivalent tree is polynomially bounded. 
\end{abstract}

\setcounter{tocdepth}{1}
\tableofcontents

\section{Introduction}

\subsection*{Almost automorphism groups}

If $T$ is a locally finite tree, then its automorphism group $\mathrm{Aut}(T)$, endowed with its natural locally compact and totally disconnected topology, acts continuously and properly on $T$.

Almost automorphisms of the tree $T$ (sometimes called spheromorphisms) do not act on $T$, but on its boundary $\partial_{\infty} T$. Roughly speaking, an almost automorphism of $T$ is a transformation induced in the boundary by a piecewise tree automorphism. Almost automorphisms form a topological group $\mathrm{AAut}(T)$ containing the automorphism group $\mathrm{Aut}(T)$ as an open subgroup.

In the case where $T$ is a non-rooted regular tree of degree $d+1 \geq 3$, the group $\mathcal{N}_d$ of almost automorphisms of $T$ was introduced by Neretin in connection with his work in representation theory \cite{Neretin}. Neretin proved that, from the point of view of representation theory, the group $\mathcal{N}_d$ can be seen as a $p$-adic analogue of the diffeomorphism group of the circle. Inspired by a simplicity result for the diffeomorphism group of the circle $\mathrm{Diff}^+(\mathbb{S}^1)$ \cite{Herm}, Kapoudjian later proved that the group $\mathcal{N}_d$ is abstractly simple \cite{simple}.

Recently, Bader, Caprace, Gelander and Mozes proved that $\mathcal{N}_d$ does not have any lattice \cite{BCGM}. This result is remarkable for the reason that all the familiar examples of simple locally compact groups (which are unimodular), e.g.\ real or $p$-adic Lie-groups, or the group of type preserving automorphisms of a locally finite regular tree, are known to have lattices. Actually $\mathcal{N}_d$ turned out to be the first example of a locally compact simple group without lattices.

In this paper we investigate a family of groups which appear as generalizations of Neretin's group. Here we give an outline of their construction (see Section \ref{aaut-type} for precise definitions). Every finite permutation subgroup $D \leq \mathrm{Sym}(d)$ is known to give rise to a closed subgroup of the automorphism group $\mathrm{Aut}(\mathcal{T}_d)$ of the rooted $d$-regular tree $\mathcal{T}_d$, by considering the infinitely iterated permutational wreath product $W(D) = (\ldots \wr D) \wr D$. Elements of $W(D)$ are rooted automorphisms whose local action is prescribed by $D$. We now consider the quasi-regular rooted tree $\tree$, in which the root has degree $k$ and other vertices have degree $d+1$. Roughly, the almost automorphism groups $\Daaut$ we are interested in, are homeomorphisms of $\bord$ that are piecewise tree automorphisms whose local action is prescribed by $D$. This family of groups generalizes Neretin's groups because when $D$ is the full permutation group $\mathrm{Sym}(d)$, the group $W(D)$ is the full automorphism group of $\mathcal{T}_d$ and we can check that $\mathrm{AAut}_{D}(\mathcal{T}_{d,2}) \simeq \mathcal{N}_d$.

These groups appear in \cite{germs}, where a careful study of the abstract commensurator group of self-replicating profinite wreath branch groups is carried out (we refer to \cite{BEW} for an introduction to abstract commensurators of profinite groups). Let $W_k(D)$ be the closed subgroup of $\aut$ fixing pointwise the first level of $\tree$ and acting by an element of $W(D)$ in each subtree rooted at level one. Under the additional assumption that $D \leq \mathrm{Sym}(d)$ is transitive and is equal to its normaliser in $\mathrm{Sym}(d)$, the group $\Daaut$ turns out to be isomorphic to the abstract commensurator group of $W_k(D)$. In particular Neretin's group $\mathcal{N}_d$ is the abstract commensurator group of $W_2(\mathrm{Sym}(d))$, or equivalently the group of germs of automorphisms of $\mathrm{Aut}(T)$ in the language of \cite{germs}.

The result that the groups $\Daaut$ are compactly presented is motivated by their connections with Thompson groups and their generalizations. Recall that Higman \cite{Higman} constructed an infinite family of finitely presented infinite simple groups $V_{d,k}$ (sometimes denoted $G_{d,k}$), generalizing the group $V$ introduced by R. Thompson. When $D \leq \mathrm{Sym}(d)$ is the trivial group then $\Daaut$ is nothing else than $V_{d,k}$ (see Section \ref{sec-Hig} for details). One of the reasons why combinatorial group theorists became interested in Thompson groups is because of the combination of simplicity and finiteness properties. Indeed Thompson groups $T$ and $V$ turned out to be the first known examples of finitely presented infinite simple groups (see \cite{CFP}). While simplicity results for $\Daaut$ have recently been obtained in \cite{germs}, here we settle in the positive the question whether these groups satisfy the locally compact version of being finitely presented, i.e.\ being compactly presented.

\subsection*{Compact presentability}

Recall that a locally compact group is said to be compactly generated if there exists a compact subset $S$ so that the group generated by $S$ is the whole group $G$. Less known than the notion of compact generation is the notion of compact presentation. A locally compact group $G$ is said to be compactly presented if it admits a compact generating subset $S$ such that $G$ has a presentation, as an abstract group, with $S$ as set of generators and relators of bounded length (but possibly infinitely many relators). When the group $G$ is discrete, this amounts to saying that $G$ is finitely presented; and as in the discrete case, for a locally compact group, being compactly presented does not depend on the choice of the compact generating set $S$.

Compact presentability can be interpreted in terms of coarse simple connectedness of the Cayley graph of the group with respect to some compact generating subset. In particular, among compactly generated locally compact groups, being compactly presented is preserved by quasi-isometries. For a proof of this result, see for instance \cite[Chapter 8]{Cor-dlH}.

Our first result is the following:

\begin{thm} \label{thma}
For any $k \geq 1, d \geq 2$, and any subgroup $D \leq \mathrm{Sym}(d)$, the group $\Daaut$ is compactly presented.
\end{thm}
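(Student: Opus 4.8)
The plan is to realize $G = \Daaut$ as generated by a compact open subgroup together with a finitely presented discrete subgroup, and to assemble a presentation of $G$ from the presentations of these two pieces plus controlled ``straightening'' relations. First I would fix the compact open subgroup $U = \mathrm{Aut}_D(\tree)$ consisting of those genuine automorphisms of $\tree$ whose local action at every vertex lies in $D$; this is a profinite, hence compact, subgroup, and it is open in $G$. Inside $G$ I would also single out the copy of the Higman--Thompson group $V := V_{d,k}$ formed by the almost automorphisms with trivial local action (the ``rigid'' rearrangements of complementary subtrees). Since any almost automorphism is a combinatorial rearrangement of subtrees followed by local tree automorphisms in $W(D)$, every $g \in G$ admits a decomposition $g = vu$ with $v \in V$ and $u \in U$; in particular $S = U \cup Q$, where $Q$ is a finite generating set of $V$, is a compact generating set, so $G$ is compactly generated.

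Next, the two building blocks are each compactly presented: $U$ carries its multiplication-table presentation, with all relators of length $\le 3$ (a compact group is always compactly presented), while $V$ is finitely presented by Higman's theorem. The task is to glue these, and the mechanism is a combination lemma (in the spirit of Cornulier--de la Harpe): $G$ is compactly presented once one exhibits, in the alphabet $U \sqcup Q$, a presentation whose relators are (i) the multiplication relations of $U$, (ii) the finite set of relators of $V$, and (iii) ``straightening'' relators recording how $U$-letters commute past $Q$-letters.

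The heart of the matter is relation (iii). The key observation is that if $u \in U$ is \emph{localized}, meaning it fixes pointwise a finite honeycomb $P$ and acts on each complementary subtree by an element of $W(D)$, and if $q \in V$ is represented by a pair of honeycombs whose target is $P$, then the canonical trivial-local-action isomorphisms identifying the subtrees hanging off the two honeycombs conjugate $W(D)$ to $W(D)$; hence $q^{-1} u q$ is again localized and lies in $U$. This yields genuine relators $q^{-1} u q = u'$ with $u' \in U$, allowing one to push all $U$-letters to the right and reduce an arbitrary word in $S$ to normal form $vu$. A relation $w = 1$ then forces $v = u^{-1} \in U \cap V$, a finite group, after which the $V$-part is filled using the finitely many relators of $V$ and the $U$-part using the multiplication relations of $U$.

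\textbf{Main obstacle.} The difficulty, and the source of the quantitative statement, is bounding the \emph{cost} of this straightening. A general $u \in U$ must first be written, via the multiplication relations of $U$, as a product of localized pieces adapted to the honeycomb of the $Q$-letter it must cross; the number of such pieces, and the way their supports spread as they are conjugated across successive $Q$-letters, grows with the combinatorial complexity of the honeycombs involved. Controlling this growth is exactly what must be done carefully, and it is governed by the geometry of $V$: the upshot is that the area needed to fill a relation of length $n$ is bounded in terms of the Dehn function of the embedded Higman--Thompson group $V_{d,k}$, which both establishes compact presentability and yields the announced Dehn-function estimate (giving, together with Guba's bound, the polynomial bound for $\mathcal{N}_3$).
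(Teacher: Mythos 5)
Your overall architecture matches the paper's: generate $\Daaut$ by the compact open subgroup $W_k(D)$ together with a finite generating set of $V_{d,k}$, take as relators the multiplication table of $W_k(D)$, a finite presentation of $V_{d,k}$, commensurating relators $\sigma u \sigma^{-1}=u'$, and exchange relators letting $W_k(D)$-letters slide past $V_{d,k}$-letters, then reduce an arbitrary null-homotopic word to the normal form (element of $W_k(D)$)$\cdot$(word in $V_{d,k}$). However, there is a genuine gap at the final step, and it is precisely where the real difficulty of the theorem lies. You assert that a null-homotopic word in normal form forces $v=u^{-1}\in U\cap V$, ``a finite group.'' This is false whenever $D$ is nontrivial: by Lemma \ref{lem-inter} the intersection $W_k(D)\cap V_{d,k}$ is $D_{\infty}^k$, the increasing union of the iterated wreath products $D_n^k$, which is an \emph{infinite} locally finite group, and is not even finitely generated (the paper stresses exactly this point when explaining why Theorem \ref{thm-sch} cannot be applied to $\Daaut$). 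Consequently you cannot ``fill'' the identification of the $U$-letter $u^{-1}$ with the $Q$-word representing $v$ by finitely many relators of bounded length: you would need one such identification relator for each element of $D_{\infty}^k$, and these have unbounded length.

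Bridging this infinite intersection with a bounded set of relators is the core of the paper's proof, and your outline contains no mechanism for it. The paper includes identification relators $R_2$ only for the finite bottom layer $D_1^k$, plus the specific conjugating elements $\Delta\subset V_{d,k}$, and then proves (Lemma \ref{lem-i}, by induction on the number of carets, using Lemma \ref{increase} to push the support of an element of $D_{\infty}^{(i)}$ one level deeper under conjugation by $\delta_{i,j}$) that every $u\in D_{\infty}^k$ equals, \emph{as a consequence of the chosen relators}, a word in $\Sigma$ of length and cost linear in $\kappa(u)$; combined with the caret lower bound on the word metric (Proposition \ref{carets}), this is what makes the final filling legitimate and quantitatively controlled (Corollary \ref{u-replace} and Proposition \ref{dehn}). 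Your ``Main obstacle'' paragraph instead locates the difficulty in the spreading of supports during straightening; that part is comparatively easy (the paper's Lemma \ref{u-left} handles it by divide-and-conquer at cost $O(n\log n)$). So the proposal, as written, would fail for every nontrivial $D$; it is only a complete argument in the case $D=1$, where $\Daaut=V_{d,k}$ and the theorem is Higman's. (Minor point: the resulting corollary concerns Neretin's group $\mathcal{N}_2$ of the trivalent tree, not $\mathcal{N}_3$.)
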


The group $\Daaut$ always contain a dense copy of the Higman-Thompson finitely presented group $V_{d,k}$. Here we insist on the fact that for a locally compact group, although having a dense finitely generated subgroup is a sufficient condition for being compactly generated, the analogue result does not hold for compact presentation, i.e.\ having a dense finitely presented subgroup does not imply compact presentation of the ambient group. For example, for any non-Archimedean local field $\mathbb{K}$, the group \mbox{$\mathbb{K}^2 \rtimes \mathrm{SL}_2(\mathbb{K})$} has a central extension with non-compactly generated kernel, and is therefore not compactly presented (see for instance \cite[Proposition 8.A.26]{Cor-dlH}). However the reader can check that this group admits dense finitely generated free subgroups.

We also emphasize the fact that, for the case of Neretin's group, Theorem \ref{thma} cannot be obtained by proving finite presentation of a discrete cocompact subgroup because these do not exist \cite{BCGM}. However we note that it seems to be unknown whether Neretin's group $\mathcal{N}_d$ is quasi-isometric to a finitely generated group.

As a by-product of Theorem \ref{thma} and the main result of \cite{BCGM}, we also obtain that locally compact simple groups without lattices also exist in the realm of compactly presented groups.

\subsection*{Dehn function}

Having obtained compact presentability of a locally compact group $G$ naturally leads to the study of an invariant of $G$, having both geometric and combinatorial flavors, called the Dehn function of $G$.

From the geometric point of view, the Dehn function $\delta_G(n)$ is the supremum of areas of loops in $G$ of length at most $n$. In other words, it is the best isoperimetric function, where isoperimetric function can be understood as for simply connected Riemannian manifolds. 

From the combinatorial perspective, the Dehn function is a quantified version of compact presentability: $\delta_G(n)$ is the supremum over all relations $w$ of length at most $n$ in the group, of the minimal number of relators needed to convert $w$ to the trivial word.

First recall that for any two functions $f,g: \mathbb{N} \rightarrow \mathbb{N}$, we say that $f$ is asymptotically bounded by $g$, denoted $f \preccurlyeq g$, if for some constant $c$ we have $f(n) \leq cg(cn)+cn+c$ for every $n \geq 0$; and $f,g$ have the same $\approx$-asymptotic behavior, denoted by $f \approx g$, if $f \preccurlyeq g$ and $g \preccurlyeq f$. 

If $G$ is compactly presented and if $S$ is a compact generating set, then there is $k \geq 1$ such that the group $G$ is presented by $\left\langle S \mid R_k \right\rangle$, where $R_k$ is set of relations in $G$ of length at most $k$. The area $a(w)$ of a relation $w$, i.e.\ a word in the letters of $S$ which represents the identity in $G$, is the smallest integer $m$ so that $w$ can be written in the free group $F_S$ as a product of $m$ conjugates of relators of $R_k$. Now define the Dehn function of $G$ by \[ \delta_{G}(n) = \sup \left\{ a(w) : w \, \, \text{relation of length at most $n$}\right\}. \] This function depends on the choice of $S$ and $k$, but its $\approx$-asymptotic behavior does not, and is actually a quasi-isometry invariant of $G$.

Our second result is the following upper bound on the Dehn function of almost automorphism groups:

\begin{thm} \label{thmb}
For any $k \geq 1, d \geq 2$, and any subgroup $D \leq \mathrm{Sym}(d)$, the Dehn function of $\Daaut$ is asymptotically bounded by that of $V_{d,k}$.
\end{thm}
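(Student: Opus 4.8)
The plan is to reduce every relation in $\Daaut$ to a relation lying in $V_{d,k}$, at an additive cost that is negligible compared with $\delta_{V_{d,k}}$. Write $U \leq \Daaut$ for the compact open subgroup of genuine automorphisms of $\tree$ with local action prescribed by $D$ (an iterated permutational wreath product $W(D)$-type group, hence profinite), let $F$ be a finite generating set of the finitely generated group $V_{d,k}$, and set $S = U \cup F$. Since $U$ is open and $V_{d,k}$ is dense one has $\Daaut = V_{d,k}\, U$, so $S$ is a compact generating set. I would start from the compact presentation furnished by Theorem~\ref{thma}, refined so that its bounded-length relators fall into three families: (i) the multiplication relations $u_1 u_2 = u_3$ inside $U$; (ii) the lifts of a fixed finite presentation $\langle F \mid R_0\rangle$ of $V_{d,k}$, which are genuine relators of $\Daaut$ because $V_{d,k}$ embeds; and (iii) finitely many mixed relators recording the products $f u$ for $f \in F$ and $u$ running over a transversal of $U$ modulo a deep congruence subgroup $U_{m_0}$. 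I would also record the elementary fact that $U \cap V_{d,k} = \{1\}$: an element of $V_{d,k}$ is an almost automorphism with trivial local action, an element of $U$ is a genuine automorphism, and a genuine automorphism with trivial local action is the identity.

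The heart of the argument is a collecting lemma allowing one to push all $U$-letters of a relation to the right. Its engine is the commutation identity: if the integer $m_0$ bounds the combinatorial complexity (domain and range forests) of every $f \in F$, then $f^{-1} u f \in U_{m_0}$ for all $f \in F$ and all $u \in U_{m_0}$. Indeed, below level $m_0$ each such $f$ realises an honest tree isomorphism between cones, so conjugating a deeply supported automorphism by $f$ yields again a deeply supported automorphism with local action in $D$. Accordingly I would first split each $U$-letter as (shallow transversal element)$\cdot$(deep element of $U_{m_0}$) using (i), then push the deep parts to the right across the $F$-letters by the commutation identity, which costs one relator per crossing and creates no new letters, and finally collect the finitely-many-valued shallow parts using the mixed relators (iii). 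After merging the accumulated $U$-letters by (i), the relation $w$ of length $n$ is converted, modulo the relators, into a word $\omega_\Gamma \cdot \eta$ with $\omega_\Gamma$ a word in $F$ and $\eta$ a single letter of $U$.

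Once this normal form is reached the conclusion is immediate. Since $w =_{\Daaut} 1$, the element represented by $\omega_\Gamma$ equals $\eta^{-1} \in U$; but $\omega_\Gamma$ represents an element of $V_{d,k}$, so by $U \cap V_{d,k} = \{1\}$ both $\omega_\Gamma$ and $\eta$ represent the identity. Thus $\omega_\Gamma$ is a genuine relation of $V_{d,k}$ and can be filled using the relators (ii) with area at most $\delta_{V_{d,k}}(\lvert \omega_\Gamma \rvert)$, while $\eta =_{\Daaut} 1$ is disposed of by a single relator of type (i). If the collecting step can be arranged so that $\lvert \omega_\Gamma \rvert \leq C n$ and its total area is at most polynomial in $n$, then, $V_{d,k}$ being non-hyperbolic (it contains $\mathbb{Z}^2$) and hence having $\delta_{V_{d,k}} \succcurlyeq n^2$, both the collecting cost and $\delta_{V_{d,k}}(Cn)$ are $\preccurlyeq \delta_{V_{d,k}}(n)$, giving $\delta_{\Daaut} \preccurlyeq \delta_{V_{d,k}}$.

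The main obstacle is precisely this bookkeeping in the collecting step, and it originates in the fact that $U$ is open but not normal in $\Daaut$: a careless conjugation $f u f^{-1}$ need not return to $U$, and a naive collection of the shallow parts produces, at each crossing, a bounded $V_{d,k}$-correction whose uncontrolled iteration would make the length of $\omega_\Gamma$, and hence the area, grow exponentially. The commutation identity removes this danger for the deep parts entirely, so the real work is to schedule the collection of the shallow parts --- a bounded amount of data governed by the finite quotient $U / U_{m_0}$ --- so that these corrections are created only linearly often and are never reprocessed, keeping the length growth linear and the area polynomial. Establishing such a schedule, with constants independent of the relation $w$, is where I expect the difficulty to concentrate.
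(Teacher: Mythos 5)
There is a genuine gap, and it is fatal: your claim that $U \cap V_{d,k} = \{1\}$ is false whenever $D$ is non-trivial. Consider an automorphism of $\tree$ that acts by permutations from $D$ on the first $n$ levels and whose sections at level $n$ are all trivial (an element of the finite iterated wreath product denoted $D_n^k$ in the paper). Such an element is a genuine automorphism with local action prescribed by $D$, so it lies in $U = W_k(D)$; but it is also locally order-preserving, since outside the finite subtree consisting of the first $n$ levels it acts by order-preserving (indeed trivial) isomorphisms of cones, so it lies in $V_{d,k}$. Your argument conflates ``locally order-preserving'' (which only requires trivial local action \emph{outside some finite subtree}) with ``trivial local action everywhere.'' In fact the paper's Lemma \ref{lem-inter} computes $W_k(D) \cap V_{d,k} = D_{\infty}^k$, an infinite locally finite group as soon as $D \neq 1$.

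This error removes exactly the difficulty around which the paper's proof is built. In your final step, the word $\omega_\Gamma$ represents the element $\eta^{-1} \in U \cap V_{d,k} = D_{\infty}^k$, which need not be the identity and, a priori, can have complexity (number of carets) growing linearly in $n$; hence $\omega_\Gamma$ is \emph{not} a relation of $V_{d,k}$ and cannot be filled by your relators of type (ii). To finish, one must show that such an element of $D_{\infty}^k$ can be rewritten, \emph{inside the presentation and at controlled cost}, as a word in the $V_{d,k}$-generators of controlled length; this is precisely the content of the paper's relators $R_2$, Lemma \ref{lem-i} and Corollary \ref{u-replace} (an induction on the number of carets using the conjugating elements $\Delta$ to push the support deeper, combined with the caret lower bound on word length of Proposition \ref{carets}), and it is the technical heart of the argument. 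By contrast, the difficulty you flag as the main obstacle --- scheduling the collection of $U$-letters with controlled cost --- is comparatively benign: the paper resolves it by a divide-and-conquer argument (Lemma \ref{u-left}) costing $O(n \log n)$, which is enough because $n \log n \preccurlyeq n^2 \preccurlyeq \delta_{V_{d,k}}$, the group $V_{d,k}$ being non-hyperbolic. So your proposal correctly identifies a secondary issue but assumes away the primary one.
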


On the other hand, the Dehn function of $\Daaut$ is not linear because having a linear Dehn function characterizes Gromov-hyperbolic groups among compactly presented groups, and the group $\Daaut$ is easily seen not to be Gromov-hyperbolic. So by a general argument (see for example \cite{Bow}), the Dehn function of $\Daaut$ has a quadratic lower bound.

In the case $d=2$, all the groups $V_{2,k}$ turn out to be isomorphic to Thompson group $V$. While the Dehn function of Thompson group $F$ has been proved to be quadratic \cite{Guba-quad}, it is not known whether the Dehn function of $V$ is quadratic or not. However, using a result of Guba \cite{Guba-poly} who showed the upper bound $\delta_V \preccurlyeq n^{11}$, we obtain:

\begin{cor}
Neretin's group $\mathcal{N}_2$ has a polynomially bounded Dehn function ($\preccurlyeq n^{11}$).
\end{cor}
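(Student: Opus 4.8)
The plan is to assemble the corollary from three ingredients already at hand: the identification of Neretin's group as a member of the family $\Daaut$, Theorem \ref{thmb}, and Guba's polynomial bound on the Dehn function of Thompson's group $V$. First I would recall, as noted in the introduction, that when $D = \mathrm{Sym}(d)$ the group $W(D)$ is the full automorphism group of $\treed$, and one has the isomorphism $\mathrm{AAut}_{\mathrm{Sym}(d)}(\mathcal{T}_{d,2}) \simeq \mathcal{N}_d$. Specializing to $d=2$, this places $\mathcal{N}_2$ squarely within the scope of Theorem \ref{thmb}, with parameters $d=2$, $k=2$, and $D = \mathrm{Sym}(2)$.

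Next I would apply Theorem \ref{thmb} with these parameters to obtain $\delta_{\mathcal{N}_2} \preccurlyeq \delta_{V_{2,2}}$. It then remains to control the Dehn function of the embedded Higman--Thompson group $V_{2,2}$. Here I would invoke the fact recorded above that for $d=2$ every group $V_{2,k}$ is isomorphic to Thompson's group $V$; in particular $V_{2,2} \simeq V$, so that $\delta_{V_{2,2}} \approx \delta_V$. Guba's theorem \cite{Guba-poly} then supplies $\delta_V \preccurlyeq n^{11}$.

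The conclusion follows by transitivity of the relation $\preccurlyeq$: chaining $\delta_{\mathcal{N}_2} \preccurlyeq \delta_{V_{2,2}} \approx \delta_V \preccurlyeq n^{11}$ yields $\delta_{\mathcal{N}_2} \preccurlyeq n^{11}$, as claimed. I do not anticipate any genuine obstacle at this stage: all of the substantive work has been carried out in the proof of Theorem \ref{thmb}, and the corollary amounts to matching the parameters of $\mathcal{N}_2$ to the hypotheses of that theorem and then citing Guba's estimate through the identification $V_{2,2} \simeq V$.
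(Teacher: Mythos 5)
Your proposal is correct and is exactly the paper's argument: the paper likewise combines the identification $\mathcal{N}_2 \simeq \mathrm{AAut}_{\mathrm{Sym}(2)}(\mathcal{T}_{2,2})$, Theorem \ref{thmb} with $d=k=2$, the isomorphism $V_{2,2}\simeq V$, and Guba's bound $\delta_V \preccurlyeq n^{11}$. No gaps; the chaining via $\preccurlyeq$ and the quasi-isometry invariance of the Dehn function is all that is needed.
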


We believe that the result of Guba can be extended to the family of groups $V_{d,k}$, i.e.\ that every group $V_{d,k}$ satisfies a polynomial isoperimetric inequality. By Theorem \ref{thmb} this would imply that the Dehn function of $\Daaut$ is polynomially bounded for arbitrary $k \geq 1 , d \geq 2$ and $D \leq \mathrm{Sym}(d)$.

\subsection*{Almost automorphism groups associated with closed regular branch groups}

The notions of self-similarity and branching appear naturally in the theory of groups acting on rooted trees. Basic definitions are recalled at the beginning of Section \ref{Gaaut-type}, and we refer the reader to the surveys \cite{Nek-ssg,branch} for more on self-similar and branch groups.

To any self-similar group $G \leq \autd$ we naturally associate a subgroup $\Gaaut \leq \aautd$, consisting of almost automorphisms acting locally like an element of $G$. The group $\Gaaut$ always contains the Higman-Thompson group $V_d$ and is generated by $V_d$ together with an embedded copy of $G$. See Section \ref{Gaaut-type} for more details. It is worth noting that the definition of the group $\Gaaut$ makes sense when $G \leq \autd$ is an abstract subgroup. In particular we make a priori neither topological (e.g.\ closed) nor finiteness (e.g.\ finitely generated) assumption on $G$.

The first example of such a group was studied by R\"{o}ver in the case when $G$ is the first Grigorchuk group. He proved that $\Gaaut$ is finitely presented and simple \cite{Rov-cfpsg}. The case of a general self-similar group was then studied by Nekrashevych, who proved that these groups enjoy properties rather similar to the properties of the Higman-Thompson groups (see \cite{Nek-cp,Nek-fp}).

Later Barnea, Ershov and Weigel \cite{BEW} made use of R\"{o}ver's simplicity result to prove that the profinite completion of the Grigorchuk group, which coincides with its topological closure in $\mathrm{Aut}(\mathcal{T}_2)$, embeds as an open subgroup in a topologically simple group, namely the group of almost automorphisms acting locally like an element of the closure of the Grigorchuk group.

Here we are interested in almost automorphism groups associated with closed regular branch groups. These can also be seen as generalizations of Neretin's group. It turns out that in this setting, $\Gaaut$ is naturally a totally disconnected locally compact (t.d.l.c.\ for short) group, admitting $G$ as a compact open subgroup. Under some more assumptions on $G$, we prove: 

\begin{thm} \label{thm-cp-gaaut}
Let $G \leq \autd$ be the closure of some finitely generated, contracting regular branch group, branching over a congruence subgroup. Then $\Gaaut$ is a t.d.l.c.\ compactly presented group.
\end{thm}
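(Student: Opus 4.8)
The plan is to run, for $\Gaaut$, the same scheme that proves Theorem \ref{thma}, with the Higman--Thompson group $V_{d,k}$ and the compact open subgroup used there replaced by the discrete R\"over--Nekrashevych group and by $G$ itself. Fix a finitely generated, contracting regular branch group $H \leq \autd$, branching over a congruence subgroup, with $G = \overline{H}$; since $H \leq G$, the associated discrete group $\Haaut = \langle V_d, H \rangle$ sits inside $\Gaaut = \langle V_d, G \rangle$ as a dense subgroup containing $V_d$. As $\autd$ is profinite, $G$ is compact, and by the discussion preceding the theorem it is open in $\Gaaut$; since $V_d$ is finitely generated \cite{Higman}, the group $\Gaaut$ is generated by the compact set $G$ together with a finite generating set of $V_d$, hence is compactly generated. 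The content of the theorem is therefore to produce a presentation relative to the compact open subgroup $G$ with relators of bounded length, in the sense of \cite{Cor-dlH}.

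Next I would bring in the two finiteness inputs carried by the hypotheses. Because $H$ is finitely generated and contracting it has a finite nucleus, and Nekrashevych's work \cite{Nek-cp}, \cite{Nek-fp} (extending R\"over \cite{Rov-cfpsg}) shows that the discrete group $\Haaut$ is finitely presented. The congruence branching hypothesis is what makes $G$ genuinely compact open in $\Gaaut$ and, more importantly, forces the closed branch action to be described by finite data level by level, so that $\Gaaut$ is exactly the t.d.l.c.\ group generated by the finitely presented discrete group $\Haaut$ and the compact group $G$ --- the precise analogue of the way $\Daaut$ is assembled from $V_{d,k}$ and its compact open subgroup. I would then take as compact generating set the union of $G$ with a finite generating set of $\Haaut$, and propose as relators: the full multiplication table of $G$ (relators of length $3$); the finitely many defining relators of $\Haaut$; and finitely many further relators of bounded length recording how the combinatorial generators conjugate elements of $G$ and how the two subgroups overlap.

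The heart of the argument is to show that these bounded relators suffice, i.e.\ that every word in the generators representing the identity can be reduced, at a cost controlled by the number of relators used, to a product of conjugates of them. As in Theorem \ref{thma}, the mechanism is a normal form writing each element of $\Gaaut$ as a combinatorial part in $V_d$ followed by an element of $G$ supported deep in $\treed$, together with the observation that any relation can be pushed down to a level deep enough that the branch structure splits its $G$-part as a product of commuting factors living in disjoint subtrees, each again governed by $G$. Here the congruence hypothesis guarantees that such a level exists with the $G$-action on it being the full closed branch action, so that this splitting is available, while the contracting property keeps the required depth, and hence the length of the relators one must invoke, uniformly bounded. Quantifying this rewriting should reproduce, for $\Gaaut$, the Dehn function estimate of Theorem \ref{thmb} with $\Haaut$ in place of $V_{d,k}$.

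I expect the main obstacle to be the interaction relations: controlling, with relators of uniformly bounded length, the way the combinatorial generators conjugate elements of the compact group $G$ across subtrees, since a tree rearrangement may carry a deep section of an element of $G$ to a shallow one, a priori leaving the support of $G$. Making this precise uses the congruence branching hypothesis in an essential way, to ensure that the sections produced again lie in $G$ and that the level bookkeeping closes up uniformly; verifying that the resulting finite family of interaction relators, together with the presentations of $\Haaut$ and of $G$, generate as a normal subgroup all relations among the chosen generators is the delicate point, and is exactly where the proof of Theorem \ref{thma} has to be genuinely adapted rather than merely quoted.
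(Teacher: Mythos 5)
Your overall architecture---generators $G$ together with a finite generating set of $\Haaut$, relators consisting of the multiplication table of $G$, a finite presentation of $\Haaut$ (Nekrashevych, Theorem \ref{thm-nekfp}), and finitely many bounded-length interaction relators---is essentially the right one, and it is in substance what the paper does, except that the paper packages it abstractly: it first proves a general statement (Theorem \ref{thm-sch}) that the \sch completion $\Gamma \rpc \Lambda$ of a finitely presented group along a finitely generated commensurated subgroup is compactly presented, and then identifies $\Gaaut \cong \Haaut \rpc H$ (Theorem \ref{aaut-sch}) via the Shalom--Willis criterion (Proposition \ref{lem-sw}). The genuine gap in your proposal sits exactly at the point you yourself flag as ``delicate''. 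Once a null-homotopic word has been pushed into the normal form $u\,\sigma_1\cdots\sigma_j$ with $u \in G$ and $\sigma_i$ generators of $\Haaut$, you must know precisely what the overlap $G \cap \Haaut$ is, and you must be able to rewrite its elements, at bounded cost, both as letters of $G$ and as words in the $\sigma_i$. The statement needed is $G \cap \Haaut = H$, and this is where the congruence branching hypothesis does its real work: the paper proves it (Propositions \ref{prop-intclos} and \ref{prop-G-inter}) using Sunic's description of the closure of a regular branch group branching over a congruence subgroup (Proposition \ref{prop-zs}) together with an index-counting argument in $H \wr \mathrm{Sym}(d)$. Nothing in your sketch produces this identification; the issue is not whether ``the sections produced again lie in $G$'', but that an element of $G$ which is simultaneously piecewise of type $H$ must lie in $H$ \emph{exactly}, not merely be approximable by it. (The decomposition $\Gaaut = G \cdot \Haaut$, i.e.\ density of $\Haaut$, also requires proof---Proposition \ref{prop-dense-subgroup}---though that part is routine.)

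A second, related misstep is your use of the contracting hypothesis: you make it load-bearing in the rewriting argument (``keeps the required depth uniformly bounded''), but no such depth argument is available or needed. In the paper, contraction enters only through Nekrashevych's finite presentation theorem for $\Haaut$; no caret or level bookkeeping appears in the proof of Theorem \ref{thm-cp-gaaut}. The geometric machinery of Section \ref{sec-pres} (notably the caret induction of Lemma \ref{lem-i}) is forced in Theorem \ref{thma} only because there the intersection $V_{d,k} \cap W_k(D) = D_{\infty}^k$ is \emph{not} finitely generated---which is also why Theorem \ref{thm-sch} cannot be applied there. In your setting the situation is the opposite: $H$ is finitely generated by hypothesis, so once $G \cap \Haaut = H$ is established, the rewriting closes up with finitely many bounded relators matching a finite generating set of $H$ with the corresponding elements of $G$ (the relators $R_2$ in the proof of Theorem \ref{thm-sch}). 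So rather than adapting the geometric scheme of Theorem \ref{thma}, the efficient route is to prove the intersection and density statements and then invoke the abstract \sch-completion theorem.
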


Examples of groups covered by Theorem \ref{thm-cp-gaaut} include the aforementioned topologically simple group constructed in \cite{BEW}, as well as other groups described in Section \ref{Gaaut-type}. As an application, we obtain that the profinite completion of the Grigorchuk group embeds as an open subgroup in a topologically simple compactly presented group.

Note that any group $G$ appearing in Theorem \ref{thm-cp-gaaut} can be explicitly described in terms of the notions of patterns and finitely constrained groups, an introduction of which can be found in \cite{ZS-haus}: $G$ is the finitely constrained group defined by allowing all patterns of a fixed size appearing in the group of which it is the closure. See also the end of Section \ref{Gaaut-type}.

The proof of Theorem \ref{thm-cp-gaaut} will consist in two steps, and we believe that each one is of independent interest. The first will be to establish a general result on compact presentability of the so-called \sch completions (see Section \ref{sec-sch}), and the second will be to identify our group with the \sch completion of one of its dense subgroup (see Theorem \ref{aaut-sch}).

\subsection*{Organization}
We start by providing a brief introduction to almost automorphisms of trees and Higman-Thompson groups in the next two sections. In Section \ref{aaut-type} we define the groups $\Daaut$ and their topology, and establish some preliminary results. Section \ref{sec-pres} contains the proofs of Theorem \ref{thma} and Theorem \ref{thmb}. In Section \ref{sec-sch}, we prove a general statement on compact presentability of \sch completions, namely Theorem \ref{thm-sch}, and the proof of Theorem \ref{thm-cp-gaaut} is given in Section \ref{Gaaut-type}.

\subsection*{Acknowledgments}
I thank Yves de Cornulier for stimulating interactions, for his careful reading of the paper and his valuable comments. I am also very grateful to Pierre-Emmanuel Caprace for helpful discussions which gave rise to this work, to Laurent Bartholdi for useful discussions about branch groups, and to Pierre de la Harpe for his remarks and corrections.

\section{Tree almost automorphisms}

\subsection{The quasi-regular rooted tree $\tree$ and its boundary}
Let $A$ and $B$ be finite sets of cardinality respectively $k \geq 1$ and $d \geq 2$. Consider the set of finite words $\left\{\varnothing \right\} \cup \left\{a b_1\cdots b_n \, : \, a \in A, \, b_i \in B \right\}$ over the alphabet $X = A \cup B$ being either empty or beginning by an element of $A$. This set is naturally the vertex set of a rooted tree, where the root is the empty word $\varnothing$ and two vertices are adjacent if they are of the form $v$ and $vx$, $x \in X$. We will denote this tree by $\tree$. In the case when $k=d$ it will be denoted by $\mathcal{T}_{d}$. For any vertex $v$, we will also denote by $\tree^v$ the subtree of $\tree$ spanned by vertices having $v$ as a prefix. The distance between a vertex and the root will be called its level, and the number of its neighbours will be called its degree. If $v$ is a vertex of level $n \geq 0$, then its neighbours of level $n+1$ are called the descendants of $v$. By construction, the root of $\mathcal{T}_{d,k}$ has degree $k$, and a vertex of level $n \geq 1$ has degree $d+1$: it has one distinguished neighbour pointing toward the root, and $d$ descendants. See Figure \ref{figure1} for the case $k=2$, $d=3$.

The boundary $\bord$ of the tree $\mathcal{T}_{d,k}$ is defined as the set of infinite words $a b_1 \cdots b_n \cdots$, i.e.\ infinite geodesic rays in $\tree$ started at the root. We define the distance between two such words $\xi, \xi'$ by $d(\xi, \xi') = d^{-|\xi \wedge \xi'|}$, where $|\xi \wedge \xi'|$ is the length of the longest common prefix of $\xi$ and $\xi'$. Equipped with this distance, the boundary at infinity $\partial_{\infty} \tree$ turns out to be homeomorphic to the Cantor set.

From now and for the rest of the paper, we fix an embedding of $\tree$ in the oriented plane. This embedding induces a canonical way of ordering, say from left to right, the descendants of any vertex. In particular we obtain a total ordering on the boundary at infinity $\bord$, defined by declaring that $\xi \leq \xi'$ if the first letter of $\xi$ following the longest common prefix of $\xi$ and $\xi'$, is smaller than the one of $\xi'$. 

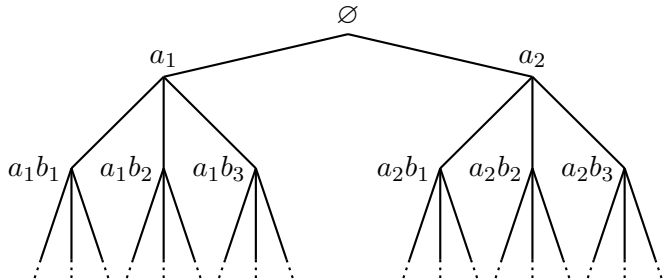
\begin{figure}[H]
\psset{unit=23pt}
\begin{pspicture}(13,5)
\psline[arrowsize=0.16]{-}(4,4)(7,4.7)
\psline[arrowsize=0.16]{-}(7,4.7)(10,4)
\rput(7,5){$\varnothing$}
\rput(4,4.3){$a_1$}
\rput(10,4.3){$a_2$}
\rput(1.9,2.5){$a_1b_1$}
\rput(3.4,2.5){$a_1b_2$}
\rput(4.9,2.5){$a_1b_3$}
\rput(7.9,2.5){$a_2b_1$}
\rput(9.4,2.5){$a_2b_2$}
\rput(10.9,2.5){$a_2b_3$}

\psline[arrowsize=0.16]{-}(4,4)(2.5,2.5)
\psline[arrowsize=0.16]{-}(4,4)(4,2.5)
\psline[arrowsize=0.16]{-}(4,4)(5.5,2.5)

\psline[arrowsize=0.16]{-}(2.5,2.5)(2,1)
\psline[arrowsize=0.16]{-}(2.5,2.5)(2.5,1)
\psline[arrowsize=0.16]{-}(2.5,2.5)(3,1)

\psline[arrowsize=0.16]{-}(4,2.5)(3.5,1)
\psline[arrowsize=0.16]{-}(4,2.5)(4,1)
\psline[arrowsize=0.16]{-}(4,2.5)(4.5,1)

\psline[arrowsize=0.16]{-}(5.5,2.5)(5,1)
\psline[arrowsize=0.16]{-}(5.5,2.5)(5.5,1)
\psline[arrowsize=0.16]{-}(5.5,2.5)(6,1)

\psline[arrowsize=0.16, linestyle=dashed, dash=1pt 2pt]{-}(2,1)(1.9,0.7)
\psline[arrowsize=0.16, linestyle=dashed, dash=1pt 2pt]{-}(2.5,1)(2.5,0.7)
\psline[arrowsize=0.16, linestyle=dashed, dash=1pt 2pt]{-}(3,1)(3.1,0.7)

\psline[arrowsize=0.16, linestyle=dashed, dash=1pt 2pt]{-}(3.5,1)(3.4,0.7)
\psline[arrowsize=0.16, linestyle=dashed, dash=1pt 2pt]{-}(4,1)(4,0.7)
\psline[arrowsize=0.16, linestyle=dashed, dash=1pt 2pt]{-}(4.5,1)(4.6,0.7)

\psline[arrowsize=0.16, linestyle=dashed, dash=1pt 2pt]{-}(5,1)(4.9,0.7)
\psline[arrowsize=0.16, linestyle=dashed, dash=1pt 2pt]{-}(5.5,1)(5.5,0.7)
\psline[arrowsize=0.16, linestyle=dashed, dash=1pt 2pt]{-}(6,1)(6.1,0.7)

\psline[arrowsize=0.16]{-}(10,4)(8.5,2.5)
\psline[arrowsize=0.16]{-}(10,4)(10,2.5)
\psline[arrowsize=0.16]{-}(10,4)(11.5,2.5)

\psline[arrowsize=0.16]{-}(8.5,2.5)(8,1)
\psline[arrowsize=0.16]{-}(8.5,2.5)(8.5,1)
\psline[arrowsize=0.16]{-}(8.5,2.5)(9,1)

\psline[arrowsize=0.16]{-}(10,2.5)(9.5,1)
\psline[arrowsize=0.16]{-}(10,2.5)(10,1)
\psline[arrowsize=0.16]{-}(10,2.5)(10.5,1)

\psline[arrowsize=0.16]{-}(11.5,2.5)(11,1)
\psline[arrowsize=0.16]{-}(11.5,2.5)(11.5,1)
\psline[arrowsize=0.16]{-}(11.5,2.5)(12,1)

\psline[arrowsize=0.16, linestyle=dashed, dash=1pt 2pt]{-}(8,1)(7.9,0.7)
\psline[arrowsize=0.16, linestyle=dashed, dash=1pt 2pt]{-}(8.5,1)(8.5,0.7)
\psline[arrowsize=0.16, linestyle=dashed, dash=1pt 2pt]{-}(9,1)(9.1,0.7)

\psline[arrowsize=0.16, linestyle=dashed, dash=1pt 2pt]{-}(9.5,1)(9.4,0.7)
\psline[arrowsize=0.16, linestyle=dashed, dash=1pt 2pt]{-}(10,1)(10,0.7)
\psline[arrowsize=0.16, linestyle=dashed, dash=1pt 2pt]{-}(10.5,1)(10.6,0.7)

\psline[arrowsize=0.16, linestyle=dashed, dash=1pt 2pt]{-}(11,1)(10.9,0.7)
\psline[arrowsize=0.16, linestyle=dashed, dash=1pt 2pt]{-}(11.5,1)(11.5,0.7)
\psline[arrowsize=0.16, linestyle=dashed, dash=1pt 2pt]{-}(12,1)(12.1,0.7)

\end{pspicture}
\caption{A picture of the tree $\mathcal{T}_{3,2}$.}
\label{figure1}
\end{figure}

\subsection{The group $\aaut$ of almost automorphisms of $\tree$}

Recall that the group $\aut$ of automorphisms of the rooted tree $\tree$ is defined as the group of bijections of the set of vertices fixing the root and preserving the edges. In particular every automorphism of $\tree$ induces a homeomorphism of $\bord$. We now introduce a larger subgroup of the homeomorphism group of $\bord$, namely the group of homeomorphisms of $\bord$ which are piecewise tree automorphisms.

\begin{defi}
A finite subtree $T$ of $\tree$ is a rooted complete subtree if it contains the root as a vertex of degree $k$ and if any other vertex which is not a leaf has degree $d+1$.
\end{defi}

If $T$ is a finite rooted complete subtree of $\tree$ then its complement is a forest composed of finitely many copies of the rooted $d$-regular tree $\mathcal{T}_d$. If $T,T'$ are subtrees of $\tree$, a map $\psi: \tree \setminus T \rightarrow  \tree \setminus T'$ will be called a forest isomorphism if it maps each connected component of $\tree \setminus T$ onto a connected component of $\tree \setminus T'$, and induces a tree isomorphism on each of these connected components. Note that such a forest isomorphism naturally induces a homeomorphism of $\bord$.

\begin{defi}
The group $\aaut$ is defined as the set of equivalence classes of triples $(\psi,T,T')$, where $T,T'$ are finite rooted complete subtrees such that $| \partial T | = | \partial T' |$ and $\psi: \tree \setminus T \rightarrow  \tree \setminus T'$ is a forest isomorphism, where two triples are said to be equivalent if they give rise to the same homeomorphism of $\bord$. The multiplication in $\aaut$ is inherited from the composition in $\mathrm{Homeo}(\bord)$.
\end{defi}

We mention the following result, whose proof is easy and left to the reader, which gives an alternative definition of the group of almost automorphisms $\aaut$.

\begin{lem}
Let $T_1,T_1',T_2,T_2'$ be finite complete rooted subtrees of $\tree$. Then two triples $(\psi_1,T_1,T_1')$, $(\psi_2,T_2,T_2')$ are equivalent if and only if there exist finite rooted complete subtrees $T,T'$ so that $T$ (resp.\ $T'$) contains both $T_1$ and $T_2$ (resp.\ $T_1'$ and $T_2'$) and $\psi_1, \psi_2 : \tree \setminus T \rightarrow  \tree \setminus T'$ are equal.
\end{lem}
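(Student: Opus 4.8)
The plan is to prove the two implications separately, with all of the substance lying in the forward (``only if'') direction. The engine behind both directions is the following invariance principle, which I would record first: if $\psi : \tree \setminus S \rightarrow \tree \setminus S'$ is a forest isomorphism and $\hat{S} \supseteq S$ is a finite rooted complete subtree, then $\psi$ restricts to a forest isomorphism $\tree \setminus \hat{S} \rightarrow \tree \setminus \hat{S}'$, where $\hat{S}' = S' \cup \psi(\hat{S} \setminus S)$, and this restriction induces the \emph{same} homeomorphism of $\bord$. Indeed, every boundary ray $\xi$ eventually lies entirely inside a single component of $\tree \setminus S$, and its image is read off by applying $\psi$ inside that component; shrinking to a subcomponent of $\tree \setminus \hat{S}$ does not change where $\psi$ sends the ray. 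Granting this, the easy direction is immediate: if $\psi_1 = \psi_2$ as forest isomorphisms $\tree \setminus T \rightarrow \tree \setminus T'$, then applying the invariance principle to $\psi_1$ (with $S = T_1$) and to $\psi_2$ (with $S = T_2$) shows that both triples induce the same homeomorphism as $\psi_1 = \psi_2$ on $\tree \setminus T$, hence they are equivalent.

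For the converse, assume the two triples induce the same homeomorphism $h$ of $\bord$. First I would set $T = T_1 \cup T_2$, after checking that the union of two finite rooted complete subtrees is again one: a vertex is a non-leaf of the union exactly when it is a non-leaf of $T_1$ or of $T_2$, and it then already has full degree there. By the invariance principle, restricting $\psi_i$ to $\tree \setminus T$ produces forest isomorphisms $\psi_i : \tree \setminus T \rightarrow \tree \setminus T_i''$ with $T_i'' = T_i' \cup \psi_i(T \setminus T_i) \supseteq T_i'$, both still inducing $h$. What remains is to show $T_1'' = T_2''$ and $\psi_1 = \psi_2$.

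The heart of the matter is a rigidity statement: a forest isomorphism is completely determined by the boundary homeomorphism it induces. I would deduce it from the dictionary between vertices and cylinders, namely that to each vertex $v$ corresponds the clopen set $\partial_{\infty} \tree^{v} \subseteq \bord$ of rays through $v$, and that distinct vertices yield distinct cylinders, so $v$ is recovered from $\partial_{\infty} \tree^{v}$. Fix a component $C = \tree^{v}$ of $\tree \setminus T$, with $v$ a leaf of $T$. Then $\psi_i$ carries $C$ isomorphically onto a component $\tree^{w_i}$ of $\tree \setminus T_i''$, so that $h(\partial_{\infty} \tree^{v}) = \partial_{\infty} \tree^{w_1} = \partial_{\infty} \tree^{w_2}$; by the dictionary $w_1 = w_2$. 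Since this holds for every component and a finite rooted complete subtree is determined by its set of leaves, we obtain $T_1'' = T_2'' =: T'$, which then contains both $T_1'$ and $T_2'$. Finally, on each component both $\psi_1$ and $\psi_2$ are rooted tree isomorphisms $\tree^{v} \rightarrow \tree^{w}$ inducing the same boundary map; as such an isomorphism must send the vertex $u$ to the unique vertex $u'$ with $h(\partial_{\infty} \tree^{u}) = \partial_{\infty} \tree^{u'}$, it is determined by $h$, whence $\psi_1 = \psi_2$ on $C$. Therefore $\psi_1 = \psi_2$ on all of $\tree \setminus T$, as desired.

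The hard part will be precisely this rigidity step, and within it the one point that genuinely needs care is the verification that a boundary homeomorphism arising from a forest isomorphism sends the cylinder $\partial_{\infty} \tree^{v}$ of a vertex to the cylinder of its image vertex (equivalently, that it respects the cylinder structure level by level). Everything else --- the union being complete, the restriction being a well-defined forest isomorphism, and the invariance of the induced boundary map under enlargement --- I expect to be routine.
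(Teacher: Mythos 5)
Your proof is correct. The paper leaves this lemma unproved (``whose proof is easy and left to the reader''), so there is no argument to compare against; your route --- restricting both triples to the common refinement $T = T_1 \cup T_2$ via the invariance principle, then recovering vertices from their boundary cylinders to force $T_1'' = T_2''$ and $\psi_1 = \psi_2$ componentwise --- is the natural way to fill in the omission, and every step checks out. One negligible imprecision: when $k=1$ the root and its unique child have the same cylinder, so ``distinct vertices yield distinct cylinders'' is not literally true in that case; but you only ever apply it to leaves of complete rooted subtrees and to their descendants, which have level at least $1$, where the claim does hold.
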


\begin{rmq}
By the previous lemma, when considering a triple $(\psi,T,T')$ representing an element of $\mathrm{AAut}(\tree)$, we can always assume that $T$ and $T'$ both contain a given finite subtree of $\tree$.
\end{rmq}

Note that since the only automorphism of $\tree$ inducing the trivial homeomorphism on $\bord$ is the identity, the group $\aaut$ contains a copy of the group $\aut$ of automorphisms of the tree $\tree$.

\section{Higman-Thompson groups} \label{sec-Hig}

\subsection{Introduction}

R.~Thompson introduced in 1965 three groups $F \leq T \leq V$, an introduction to which can be found in \cite{CFP}, while constructing a finitely generated group with unsolvable word problem. The groups $T$ and $V$ turned out to be the first examples of finitely presented infinite simple groups. Higman then generalized Thompson group $V$ to an infinite family of groups (which were originally denoted by $G_{d,k}$, but we will use the notation $V_{d,k}$ to keep in mind the analogy with Thompson group $V$, which is nothing else than $V_{2,1}$). K.~Brown later generalized Higman's construction to an infinite family of groups $F_{d,k} \leq T_{d,k} \leq V_{d,k}$ such that $F_{2,1} \simeq F$ and $T_{2,1} \simeq T$.

These groups were originally defined as automorphism groups of certain free algebras. We refer the reader to \cite{Brown} for an introduction from this point of view. 

The definition of the groups $V_{d,k}$ we give below is in terms of homeomorphism groups of the boundary of the quasi-regular rooted tree $\tree$. From this point of view, elements of these groups can be represented either as homeomorphisms of $\bord$ or by combinatorial diagrams, and we will use the interplay between these two representations.

\subsection{Higman-Thompson groups $V_{d,k}$ as subgroups of $\aaut$}

The definition that we adopt here for Higman-Thompson groups $V_{d,k}$ is borrowed from \cite{germs}. Note that when $k=d$ the group $V_{d,d}$ will be denoted by $V_{d}$ for simplicity.

\begin{defi}
An element of $\aaut$ is called locally order-preserving if it can be represented by a triple $(\psi,T,T')$ such that $T,T'$ are complete rooted subtrees of $\tree$ and $\psi: \tree \setminus T \rightarrow  \tree \setminus T'$ preserves the order of the boundary at infinity on each connected component.
\end{defi}

It follows from the order-preserving condition that such a forest isomorphism $\psi$ is uniquely determined by the induced bijection between the leaves of $T$ and the leaves of $T'$. Locally order-preserving almost automorphisms are easily checked to form a subgroup of $\aaut$.

\begin{defi}
The Higman-Thompson group $V_{d,k}$ is defined as the subgroup of $\aaut$ of locally order-preserving elements.
\end{defi}

Every locally order-preserving $v \in V_{d,k}$ has a unique representative $(\psi,T,T')$ so that $T,T'$ are complete rooted subtrees of $\tree$ and $T$ is minimal for the inclusion. Then $T'$ is also minimal and $(\psi,T,T')$ will be called the canonical representative of $v$. This notion coincides with the classical notion of reduced tree pair diagrams commonly used to study Thompson groups. The tree $T$ will be called the domain tree of $v$ and $T'$ the range tree. When considering a triple representing a locally order-preserving element, we will without further mention assume that this is the canonical representative.


The following finiteness result is due to Higman \cite{Higman} (see also \cite{Brown}).

\begin{thm} \label{fp}
Higman-Thompson groups $V_{d,k}$ are finitely presented.
\end{thm}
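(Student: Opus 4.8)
The plan is to prove that the Higman--Thompson groups $V_{d,k}$ are finitely presented by exhibiting an explicit finite generating set and then showing that every relation follows from finitely many short relations. I would work entirely with the tree-pair diagram description introduced above, where each element has a canonical representative $(\psi, T, T')$ given by a bijection between the leaves of a domain tree $T$ and the leaves of a range tree $T'$, both complete rooted subtrees of $\tree$. The composition of two such diagrams is computed by first enlarging the representatives so that the range tree of the first agrees with the domain tree of the second --- this is exactly the operation licensed by the lemma above, which lets us expand $T$ and $T'$ past any prescribed finite subtree.

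First I would identify a convenient finite generating set. The natural candidates are the permutations of the leaves of small trees: for each pair of adjacent leaves one takes the transposition-type diagram, together with the basic \emph{caret} moves that expand a single leaf into its $d$ descendants and contract it back. Since any tree pair can be built up by a sequence of caret expansions from the trivial diagram, and any leaf bijection decomposes into transpositions of adjacent leaves, one checks that finitely many such elementary diagrams generate all of $V_{d,k}$. Second, I would record the \emph{relations}: these come in a few families --- commutation relations expressing that diagrams supported on disjoint subtrees commute, relations governing how a transposition interacts with a caret below it, and relations expressing the symmetric-group structure among permutations of the leaves at a fixed level. The content of finite presentability is that these fall into finitely many types of bounded complexity.

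The heart of the argument is a normal-form / rewriting step: given an arbitrary word $w$ in the generators representing the identity, one pushes all the caret-expansion moves to the left and all the contraction moves to the right, reducing $w$ to a product of the form ``expand to a common refinement, apply a leaf permutation, then contract.'' The relations must be shown to suffice to carry out this rearrangement, and then to trivialize the resulting leaf permutation using the finitely many symmetric-group relations. Concretely, one shows that any two words representing the same element can be connected by a finite sequence of applications of the chosen relators, which is most cleanly organized by passing to a common refinement of the two tree pairs and arguing that the transition between any diagram and its refinement is itself a consequence of the basic caret relations.

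The main obstacle I expect is controlling the \emph{length} of the relators needed to reconcile two representatives of the same element: when one expands a diagram past a large common subtree, a single caret move near the root can force a cascade of adjustments, and one must verify that each such adjustment is realized by a \emph{bounded-length} relator rather than by an unboundedly long one. This is precisely where Higman's combinatorial analysis of the $G_{d,k}$ is needed, and where I would invoke the structure of the leaf-permutation action together with the fact that caret expansions at different leaves commute, so that the rearrangement can be performed locally and the total cost accounted for by finitely many relation types. Since the statement is attributed to Higman and reproved by Brown, I would not reconstruct this bookkeeping in full but would cite \cite{Higman} and \cite{Brown} for the verification that the finite relation set is complete.
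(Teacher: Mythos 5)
The paper gives no proof of this theorem at all: it is quoted as a known result, with the reader referred to Higman \cite{Higman} and Brown \cite{Brown}. Your proposal is in effect the same treatment, since your tree-pair sketch (elementary diagrams as generators, commutation and caret--permutation relations, reduction of an arbitrary relation to a common refinement) is a fair outline of what those references actually do, and you explicitly delegate the one genuinely hard step, the completeness of the finite relator set, to the very citations the paper relies on; there is nothing internal to the paper to compare against, and nothing in your outline contradicts the cited argument.
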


This result will be used in Section \ref{sec-pres}, where we will enlarge a finite presentation of $V_{d,k}$ to obtain a compact presentation of the group $\Daaut$.

\subsection{Saturated subsets}

We now introduce a notion of saturated subsets inside the group $V_{d,k}$, needed in Section \ref{sec-pres}. We would like to point out that this notion is not necessary if one just wants to prove Theorem \ref{thma}. However it will be used in the proof of Theorem \ref{thmb} to perform the cost estimates carefully.

\begin{defi}
A subset $\Sigma \leq V_{d,k}$ is said to be saturated if for every $\sigma = (\psi,T,T') \in \Sigma$ and every $u \in \aut$, all the elements of $V_{d,k}$ having a canonical representative of the form $(\psi',u(T),T')$ belong to $\Sigma$.
\end{defi}

\begin{lem} \label{sat}
Every finite subset $\Sigma \leq V_{d,k}$ is contained in a finite saturated subset.
\end{lem}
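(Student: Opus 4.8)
The plan is to attach to each element $v\in V_{d,k}$ a numerical size and to show that the saturation operation can never increase it. For $v$ with canonical representative $(\psi,T,T')$, set $\ell(v)=|\partial T|$, the number of leaves of its minimal domain tree; since $\psi$ induces a bijection between the leaves of $T$ and those of $T'$, we also have $\ell(v)=|\partial T'|$. The whole proof reduces to checking that $\ell$ is, in the relevant sense, monotone and that balls for $\ell$ are finite.

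The key step is the monotonicity property: if $v$ admits \emph{any} representative $(\psi,S,S')$ with $S,S'$ finite rooted complete subtrees, then $\ell(v)\le |\partial S|$. To see this I would reduce $(\psi,S,S')$ to canonical form by successively contracting carets, i.e. by deleting a family of $d$ sibling leaves of $S$ whenever $\psi$ carries them, in an order-preserving way, onto $d$ sibling leaves of $S'$. Each contraction only removes vertices, and by the uniqueness of the canonical representative stated earlier, the reduced triple reached in this way is the canonical representative of $v$. Hence the canonical domain tree is contained in $S$, and in particular $\ell(v)=|\partial T|\le |\partial S|$.

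Granting this, the construction is immediate. Given a finite subset $\Sigma_0\le V_{d,k}$, set $N=\max_{\sigma\in\Sigma_0}\ell(\sigma)$ and define
\[
\Sigma=\{\,v\in V_{d,k}\;:\;\ell(v)\le N\,\}.
\]
Then $\Sigma_0\subseteq\Sigma$ by definition of $N$, and $\Sigma$ is finite: there are only finitely many rooted complete subtrees of $\tree$ with at most $N$ leaves (a bound on the number of leaves bounds the depth, as every vertex beyond level one has $d\ge 2$ descendants, and $\tree$ is locally finite), and for a fixed domain and range tree there are only finitely many locally order-preserving forest isomorphisms, one for each bijection between the leaves.

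Finally I would verify that $\Sigma$ is saturated. Let $\sigma\in\Sigma$ have canonical representative $(\psi,T,T')$, so $|\partial T|=\ell(\sigma)\le N$, and let $u\in\aut$. As $u$ fixes the root and preserves levels and adjacency, $u(T)$ is again a finite rooted complete subtree with $|\partial u(T)|=|\partial T|$. Consequently, any element $v$ represented by a triple $(\psi',u(T),T')$ satisfies, by the monotonicity property, $\ell(v)\le |\partial u(T)|=|\partial T|\le N$, so $v\in\Sigma$; this is precisely the saturation condition. The only genuinely delicate point is the monotonicity property — that reduction to canonical form never enlarges the domain tree — while everything else is bookkeeping; the argument closes up because the $\aut$-action preserves the number of leaves, so replacing $T$ by $u(T)$ cannot push the size past the bound $N$.
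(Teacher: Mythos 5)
Your proof is correct and follows essentially the same route as the paper's: both arguments reduce to the facts that $\aut$ carries finite rooted complete subtrees to finite rooted complete subtrees with the same number of leaves, and that only finitely many elements of $V_{d,k}$ admit a representative over a domain tree of bounded size. The paper saturates $\Sigma$ by taking all elements representable over a tree in the finite $\aut$-orbit of the domain trees of elements of $\Sigma$, whereas you take the sublevel set $\{v : \ell(v) \le N\}$ of the leaf-count invariant, a finite superset of the paper's set, so the difference is only one of bookkeeping (your monotonicity step playing the role of the expansion/reduction properties of diagrams that the paper uses implicitly).
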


\begin{proof}
Let $\Omega$ be the subset of $V_{d,k}$ consisting of elements of the form $(\psi,u(T),T')$, where $u \in \aut$ and $T$ is the domain tree of the canonical representative of some element of $\Sigma$. Clearly $\Omega$ contains $\Sigma$ and is saturated. Since $\Sigma$ is finite, the number of such trees $T$ is finite, and so is the set of $u(T)$, $u \in \aut$. The result then follows from the observation that, if $T$ is a fixed finite complete rooted subtree of $\tree$, then there are only finitely many elements of $V_{d,k}$ having a canonical representative of the form  $(\psi,T,T')$.
\end{proof}

\subsection{A lower bound for the word metric in $V_{d,k}$}

Here we give a lower bound for the word metric in the group $V_{d,k}$ in terms of a combinatorial data contained in the diagrams $(\psi,T,T')$ representing elements of $V_{d,k}$.

Recall that a $d$-caret (or caret for short) in $\tree$ is a subtree spanned by a vertex of level $n \geq 1$ and its $d$ neighbours of level $n+1$. We insist on the fact that we do not consider the subtree of $\tree$ spanned by the root and its $k$ neighbours as a caret. If $T$ is a finite complete rooted subtree of $\tree$ with $\kappa$ carets, then the number of leaves of $T$ is $(d-1) \kappa + k$. In particular if $v \in V_{d,k}$ has canonical representative $(\psi,T,T')$, then $T,T'$ have the same number of leaves, and consequently they also have the same number of carets. By abuse we will call it the number of carets of $v$ and denote it by $\kappa(v)$.

Metric properties of Higman-Thompson groups of type $F$ and $T$ can be essentially understood in terms of the number of carets of tree diagrams, the latter being quasi-isometric to the word-length associated to some finite generating set. The use of this point of view shed light on some interesting large scale geometric properties of these groups (see \cite{Burillo,Bur-Cle-Ste,Bur-Cle-Ste-Tab}). However metric properties of Higman-Thompson groups of type $V$ are far less well understood, as it follows from the work of Birget \cite{Birget} that the number of carets is no longer quasi-isometric the the word-length in Thompson group $V$. 

Nevertheless, the following lemma gives a lower bound for the word metric in $V_{d,k}$ in terms of the number of carets. Note that the same result appears in \cite{Birget} for the case of Thompson group $V$.

\begin{prop} \label{carets}
For any finite generating set $\Sigma$ of $V_{d,k}$, there exists a constant $C_{\Sigma} > 0$ such that for any $v \in V_{d,k}$, we have $\kappa(v) \leq C_{\Sigma} |v|_{\Sigma}$.
\end{prop}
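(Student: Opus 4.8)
The plan is to show that the caret count $\kappa$ is subadditive for the group law, namely $\kappa(vw) \leq \kappa(v) + \kappa(w)$ for all $v,w \in V_{d,k}$, and then to conclude by induction on word length. First I would record two elementary facts that make $\kappa(v)$ tractable. On the one hand, $\kappa(v)$ equals the \emph{minimum} of the number of carets over all representatives $(\psi,T,T')$ of $v$, not only the canonical one: indeed, the domain tree of the canonical representative is contained in the domain tree of every other representative (every representative is an expansion of the reduced diagram), and enlarging a domain tree only adds carets. On the other hand, for two finite complete rooted subtrees $T_1,T_2$ of $\tree$, their union $T_1 \cup T_2$ is again a finite complete rooted subtree and satisfies $\kappa(T_1 \cup T_2) \leq \kappa(T_1) + \kappa(T_2)$, because a vertex of level $\geq 1$ that is internal in $T_1 \cup T_2$ is already internal in $T_1$ or in $T_2$, so every caret of the union is a caret of $T_1$ or of $T_2$.

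The heart of the argument is the subadditivity estimate, obtained by the usual common-refinement construction for tree pair diagrams. Write the canonical representatives $v = (\psi, S, S')$ and $w = (\phi, U, U')$, so that $\kappa(v) = \kappa(S)$ and $\kappa(w) = \kappa(U)=\kappa(U')$, and (for the convention where $vw$ means applying $w$ first) set $R = S \cup U'$. Since $R$ contains both the range tree $U'$ of $w$ and the domain tree $S$ of $v$, I can refine both diagrams so that they compose through $R$: pulling the carets of $R \setminus U'$ back through the forest isomorphism $\phi$ enlarges $U$ to a tree $\tilde U$ with $\kappa(\tilde U) = \kappa(U) + (\kappa(R) - \kappa(U')) = \kappa(R)$, and pushing the carets of $R \setminus S$ forward through $\psi$ enlarges $S'$ to a tree $\tilde S'$ with $\kappa(\tilde S') = \kappa(R)$. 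The composite forest isomorphism then yields a representative of $vw$ with domain tree $\tilde U$, hence with exactly $\kappa(R)$ carets. By the union bound of the previous paragraph, $\kappa(R) \leq \kappa(S) + \kappa(U') = \kappa(v) + \kappa(w)$, and since $\kappa(vw)$ is the minimal caret count over all representatives, we get $\kappa(vw) \leq \kappa(v) + \kappa(w)$.

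To conclude, I may assume $\Sigma$ is symmetric (replacing it by $\Sigma \cup \Sigma^{-1}$ leaves everything unchanged, and $\kappa(s^{-1}) = \kappa(s)$ because the canonical representative of $s^{-1}$ is obtained by swapping the domain and range trees of that of $s$). Put $C_\Sigma = \max_{s \in \Sigma} \kappa(s)$, which is finite as $\Sigma$ is finite. Writing an arbitrary $v \in V_{d,k}$ as a geodesic word $v = s_1 \cdots s_n$ with $s_i \in \Sigma$ and $n = |v|_\Sigma$, iterating the subadditivity estimate gives $\kappa(v) \leq \sum_{i=1}^n \kappa(s_i) \leq C_\Sigma \, n = C_\Sigma |v|_\Sigma$, where the base case $\kappa(\mathrm{id}) = 0$ holds because the root together with its $k$ descendants carries no caret. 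The step requiring the most care, and the real crux of the argument, is this subadditivity estimate, specifically verifying that the common refinement through $R$ introduces no carets beyond those counted by $\kappa(R)$ and that $\kappa(R)$ is controlled by the union bound; once that bookkeeping is in place the induction is immediate.
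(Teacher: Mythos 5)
Your proof is correct and follows essentially the same route as the paper: the paper's inductive step (multiplying by a generator adds at most $C_\Sigma$ carets, since the common expansion of the domain tree of $v$ with the range tree of $\sigma$ costs at most $\kappa(\sigma)$ carets) is exactly your subadditivity estimate $\kappa(vw) \leq \kappa(v)+\kappa(w)$ specialized to a generator. You simply make explicit the bookkeeping (minimality of the canonical representative, the union bound on carets) that the paper leaves implicit.
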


\begin{proof}
Define $C_{\Sigma} = \max_{\sigma \in \Sigma} \kappa(\sigma)$. Now remark that when multiplying, say on the right, an element $v \in V_{d,k}$ by an element $\sigma \in \Sigma$, we obtain an element $v\sigma$ having a canonical representative with trees having at most $\kappa(v) + C_{\Sigma}$ carets. This is because when expanding the domain tree of $v$ to get a common expansion with the range tree of $\sigma$, we have to add at most $C_{\Sigma}$ carets. So it follows from a straightforward induction that every element of length at most $n$ with respect to the word metric associated to $\Sigma$ has a canonical representative with at most $C_{\Sigma} n$ carets, and the proof is complete.
\end{proof}

\section{The almost automorphism groups $\Daaut$} \label{aaut-type}

Almost automorphisms of $\tree$ are homeomorphisms of the boundary $\bord$ which are piecewise tree automorphisms. In this section we introduce a family of subgroups of $\aaut$ consisting of almost automorphisms which are piecewise tree automorphisms of a given type.

\subsection{Almost automorphisms of type $W(D)$}

Let $D \leq \mathrm{Sym}(d)$ be a subgroup of the symmetric group on $d$ elements. Define recursively $D_1 = D$, seen as the subgroup of the automorphism group of the rooted $d$-regular tree $\mathcal{T}_d$ acting on level one; and $D_{n+1} = D \wr D_n$ for every $n \geq 1$, where the permutational wreath product is associated with the natural action of $D_n$ on the set of vertices of level $n$ of $\mathcal{T}_d$. We now let $W(D)$ be the closed subgroup generated by the family $(D_n)$. For any $k \geq 1$ we let $W_k(D)$ be the subgroup of $\aut$ fixing pointwise the first level and acting by an element of $W(D)$ in each subtree rooted at level one. The group $W_k(D)$ is naturally isomorphic to the product of $k$ copies of the group $W(D)$.

\begin{defi}
An almost automorphism of $\tree$ is said to be piecewise of type $W(D)$ if it can be represented by a triple $(\psi,T,T')$ such that $T,T'$ are finite rooted complete subtrees of $\tree$ and $\psi: \tree \setminus T \rightarrow  \tree \setminus T'$ belongs to $W(D)$ on each connected component, after the natural identification of each connected component of $\tree \setminus T$ and $\tree \setminus T'$ with $\mathcal{T}_d$.
\end{defi}

We observe that by construction of $W(D)$, if a triple $(\psi_1,T_1,T_1')$ is such that $\psi_1: \tree \setminus T_1 \rightarrow  \tree \setminus T_1'$ belongs to $W(D)$ on each connected component, then for any equivalent triple $(\psi_2,T_2,T_2')$ such that $T_2$ (resp.\ $T_2'$) contains $T_1$ (resp.\ $T_1'$), then $\psi_2: \tree \setminus T_2 \rightarrow  \tree \setminus T_2'$ belongs to $W(D)$ on each connected component.

\begin{prop}
The set of almost automorphisms $\Daaut$ which are piecewise of type $W(D)$ is a subgroup of $\aaut$. 
\end{prop}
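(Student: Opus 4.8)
The plan is to verify the three subgroup axioms for $\Daaut \subseteq \aaut$: that it contains the identity, is stable under inversion, and is stable under multiplication. The first two are immediate from the fact that $W(D)$ is a group. Indeed, the identity element is represented by $(\mathrm{id}, T, T)$ for any finite rooted complete subtree $T$, and the identity map lies in $W(D)$ on each component, so the identity belongs to $\Daaut$. For inversion, if $g \in \Daaut$ is represented by $(\psi, T, T')$ with $\psi$ in $W(D)$ on each connected component, then $g^{-1}$ is represented by $(\psi^{-1}, T', T)$; since $\psi$ sends each component $C$ of $\tree \setminus T$ to a component $C'$ of $\tree \setminus T'$, acting after the natural identifications with $\mathcal{T}_d$ as an element $w \in W(D)$, the inverse $\psi^{-1}$ acts on $C'$ as $w^{-1} \in W(D)$. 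Hence $g^{-1} \in \Daaut$.

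The substance of the proof is closure under multiplication, and here the strategy is to represent the two given elements by triples sharing a common tree, so that the composition can be read off component by component. First I would fix representatives $g_1 = (\psi_1, T_1, T_1')$ and $g_2 = (\psi_2, T_2, T_2')$ with each $\psi_i$ in $W(D)$ on every component. To form the product $g_1 g_2$ (the homeomorphism obtained by applying $g_2$ first and then $g_1$) I need the range tree $T_2'$ of $g_2$ to coincide with the domain tree $T_1$ of $g_1$. Using the Remark following the preceding Lemma, I may enlarge both $T_2'$ and $T_1$ to a common finite rooted complete subtree $S$, passing to equivalent triples $(\psi_2', \widetilde{T}_2, S)$ and $(\psi_1', S, \widetilde{T}_1')$ representing $g_2$ and $g_1$ respectively. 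Crucially, the observation recorded just before the statement guarantees that enlarging the trees preserves the property of belonging to $W(D)$ on each component, so $\psi_1'$ and $\psi_2'$ are still piecewise of type $W(D)$.

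With a common tree $S$ in place, the product is represented by $(\psi_1' \circ \psi_2', \widetilde{T}_2, \widetilde{T}_1')$, and it remains to check that $\psi_1' \circ \psi_2'$ is in $W(D)$ on each connected component of $\tree \setminus \widetilde{T}_2$. This is where closure of $W(D)$ under composition enters: a component $C$ of $\tree \setminus \widetilde{T}_2$ is mapped by $\psi_2'$ onto a component $C'$ of $\tree \setminus S$ as some $w_2 \in W(D)$, and $\psi_1'$ maps $C'$ onto a component $C''$ of $\tree \setminus \widetilde{T}_1'$ as some $w_1 \in W(D)$. Since the same natural identification of $C'$ with $\mathcal{T}_d$ serves as the range identification for $\psi_2'$ and the domain identification for $\psi_1'$, these identifications cancel, and under the natural identifications of $C$ and $C''$ the restriction of $\psi_1' \circ \psi_2'$ to $C$ equals $w_1 w_2 \in W(D)$. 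Thus $g_1 g_2 \in \Daaut$, completing the verification.

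The proof is essentially routine once the enlargement-invariance observation is granted, and that observation is precisely the step I would expect to be the heart of the matter: it encodes the self-similarity of $W(D)$, namely that the section of any $w \in W(D)$ at a vertex is again an element of $W(D)$ after identifying the subtree below that vertex with $\mathcal{T}_d$. Since this observation is already stated before the proposition, the remaining arguments only invoke that $W(D)$ is a group and that any two compatible triples can be brought to a common tree.
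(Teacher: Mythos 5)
Your proof is correct and takes essentially the same approach as the paper: the paper's own (one-line) proof likewise treats closure under multiplication as the only nontrivial point and deduces it from the enlargement-invariance observation together with the fact that $W(D)$ is a subgroup of $\mathrm{Aut}(\mathcal{T}_d)$. You have simply spelled out the details (common refinement of trees, cancellation of the identifications, composition of sections) that the paper leaves to the reader.
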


\begin{proof}
The only non-trivial fact that one needs to check is that $\Daaut$ is closed under multiplication, but this follows from the previous observation and from the fact that $W(D)$ is a subgroup of $\mathrm{Aut}(\mathcal{T}_d)$.
\end{proof}


If $D$ is the full permutation group $\mathrm{Sym}(d)$ then $W(D) = \mathrm{Aut}(\mathcal{T}_d)$ and $\Daaut = \aaut$. At the opposite extreme, if $D$ is the trivial group then being piecewise trivial means being locally order-preserving and $\Daaut = V_{d,k}$. It is straightforward from the definition that if $D'$ contains $D$ then $\mathrm{AAut}_{D'}(\mathcal{T}_{d,k})$ contains $\Daaut$. In particular we note that for every subgroup $D \leq \mathrm{Sym}(d)$, the group $\Daaut$ always contains $V_{d,k}$.

\subsection{Topology on $\Daaut$}

By definition the group $\Daaut$ also contains a copy of the tree automorphism group $W_k(D)$. The latter comes equipped with a natural group topology, which is totally disconnected and compact, defined by saying that the pointwise level stabilizers form a basis of neighbourhoods of the identity. We would like to extend this topology to the group $\Daaut$, i.e.\ define a group topology on $\Daaut$ for which the subgroup $W_k(D)$ is an open subgroup. For this, let us first recall the following well-known lemma, a proof of which can be consulted in \cite[Chapter 3]{Bourb}.

\begin{lem} \label{bourb}
Let $G$ be a group and let $\mathcal{F}$ be a family of subgroups of $G$ which is filtering, i.e.\ so that the intersection of any two elements of $\mathcal{F}$ contains an element of $\mathcal{F}$. Assume moreover that for every $g \in G$ and every $U \in \mathcal{F}$, there exists $V \in \mathcal{F}$ so that $V \subset gUg^{-1}$. Then there exists a (unique) group topology on $G$ for which $\mathcal{F}$ is a base of neighbourhoods of the identity. 
\end{lem}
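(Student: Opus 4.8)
The plan is to construct the topology explicitly from its prescribed neighbourhood base at the identity and then to check the axioms of a group topology, tracking exactly where each of the two hypotheses is used. The decisive simplification throughout is that the members of $\mathcal{F}$ are \emph{subgroups}, so that $U \cdot U = U$ and $U^{-1} = U$; this renders automatic all the axioms except the two that genuinely encode the hypotheses.

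First I would declare a subset $\Omega \subseteq G$ to be open exactly when for every $g \in \Omega$ there is some $U \in \mathcal{F}$ with $gU \subseteq \Omega$. Stability under arbitrary unions is immediate, and stability under finite intersections is precisely where the filtering hypothesis enters: if $g \in \Omega_1 \cap \Omega_2$ with $gU_i \subseteq \Omega_i$, choosing $U \in \mathcal{F}$ with $U \subseteq U_1 \cap U_2$ yields $gU \subseteq \Omega_1 \cap \Omega_2$. Since each $U \in \mathcal{F}$ is a subgroup, one has $hU = gU \subseteq gU$ for every $h \in gU$, so every translate $gU$ is open; consequently $\{gU : U \in \mathcal{F}\}$ is a base of open neighbourhoods of $g$, and in particular $\mathcal{F}$ is a base of neighbourhoods of the identity, as required. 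The definition also shows at once that each left translation $x \mapsto ax$ is a homeomorphism, so the topology is left-invariant.

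Next I would verify that the operations are continuous. Continuity of inversion and of multiplication \emph{at the identity} is free from the subgroup property, as $U^{-1} = U$ and $U \cdot U = U$. The only substantive point is joint continuity of multiplication at an arbitrary pair $(a,b)$, and this is exactly where the conjugation hypothesis is needed. Given a basic neighbourhood $abU$ of $ab$, the hypothesis applied to $g = b$ furnishes $V \in \mathcal{F}$ with $V \subseteq bUb^{-1}$; then for $x \in aV$ and $y \in bU$, writing $x = av$ and $y = bw$ gives $xy = ab(b^{-1}vb)w \in ab \cdot U \cdot U = abU$, since $b^{-1}vb \in U$ and $w \in U$. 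A symmetric computation, again via the conjugation hypothesis, gives continuity of inversion at an arbitrary point. Thus $G$ equipped with this topology is a topological group for which $\mathcal{F}$ is a base of neighbourhoods of the identity.

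Finally, uniqueness requires no hypothesis beyond the group topology axioms: in any topological group left translations are homeomorphisms, so if $\mathcal{F}$ is a base of neighbourhoods of the identity then $\{gU : U \in \mathcal{F}\}$ must be a base of neighbourhoods of each $g$, which forces the open sets to be exactly those satisfying the condition in the definition above. I expect the main point to get right to be the joint continuity of multiplication: it is tempting to believe that left-invariance alone suffices, whereas reconciling left and right translations is precisely what the conjugation hypothesis provides, and this is the single step where that hypothesis is indispensable.
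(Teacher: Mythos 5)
Your proof is correct. The paper does not actually prove this lemma — it cites Bourbaki, \emph{General Topology}, Chapter~III — and your argument is precisely that standard construction (declare open the sets that contain a left translate $gU$ around each of their points, then verify the axioms), specialized to the case where the filter consists of subgroups, so that the conditions $VV \subseteq U$ and $V^{-1} \subseteq U$ become automatic and only the filtering and conjugation hypotheses carry real weight, exactly as you identify.
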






Now let $\mathcal{F}$ be the family of open subgroups of $W_k(D)$, which is a base of neighbourhoods of the identity in $W_k(D)$. By combining Lemma \ref{commens} together with Proposition \ref{decomp1} below, we see that the assumption of Lemma \ref{bourb} is satisfied in $G = \Daaut$, so we deduce:

\begin{prop}
There exists a group topology on $\Daaut$ making the inclusion $W_k(D) \hookrightarrow \Daaut$ continuous and open. In particular $\Daaut$ is a t.d.l.c.\ group (which is discrete if and only if $W_k(D)$ is trivial, if and only if $D$ is trivial).
\end{prop}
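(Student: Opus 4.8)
The plan is to apply Lemma \ref{bourb} to $G = \Daaut$, taking for $\mathcal{F}$ the family of open subgroups of $W_k(D)$ (for its profinite topology), and to verify its two hypotheses. The filtering condition is immediate, since $\mathcal{F}$ is a base of neighbourhoods of the identity in the topological group $W_k(D)$ and is therefore stable under finite intersection. The substantial point is the conjugation condition: for every $g \in \Daaut$ and every $U \in \mathcal{F}$, one must produce $V \in \mathcal{F}$ with $V \subseteq g U g^{-1}$. This is where the commensuration of $W_k(D)$ in $\Daaut$, already established above from Lemma \ref{commens-gen}, Proposition \ref{decomp1} and Lemma \ref{commens}, enters; however I would argue it concretely via the level stabilizers, because I also need the conjugate to contain an \emph{open} subgroup of $W_k(D)$ and not merely a subgroup of finite index.

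For the conjugation condition, since every open $U$ contains a pointwise stabilizer $U_n$ of the first $n$ levels of $\tree$ inside $W_k(D)$, and these $U_n$ are cofinal in $\mathcal{F}$, it suffices to find, for each $n$, some $m$ with $U_m \subseteq g U_n g^{-1}$. Write $g = (\psi, T, T')$ and set $c = \max_{w}\bigl(\mathrm{lev}(\psi(w)) - \mathrm{lev}(w)\bigr)$, the maximal level increase among the leaves $w$ of $T$ under the forest isomorphism $\psi \colon \tree \setminus T \to \tree \setminus T'$. I claim that for $m$ large enough (namely $m \geq n + c$ and $m$ exceeding the depth of $T'$) one has $g^{-1} v g \in U_n$ for every $v \in U_m$. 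Indeed, as $v$ fixes a level deeper than $T'$, it preserves each subtree $\tree^{\psi(w)}$ hanging below a leaf of $T'$ and restricts there to an automorphism lying in $W(D)$ (using that $W(D)$ is self-similar, i.e. the section of any of its elements at any vertex is again in $W(D)$). Conjugating by the tree isomorphism $\psi \colon \tree^{w} \to \tree^{\psi(w)}$, which itself lies in $W(D)$, shows that $g^{-1} v g$ fixes each leaf of $T$ hence $T$ pointwise, restricts on each $\tree^{w}$ to an element of $W(D)$, and fixes every vertex up to level $m - c \geq n$. Consequently $g^{-1} v g \in W_k(D)$ and in fact $g^{-1} v g \in U_n$, giving $V = U_m \in \mathcal{F}$ with $V \subseteq g U_n g^{-1} \subseteq g U g^{-1}$. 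The main obstacle is precisely this step: one has to track levels carefully under $\psi$ and invoke the self-similarity of $W(D)$ to guarantee that the conjugate stays inside $W_k(D)$, which is exactly what upgrades abstract commensuration to the openness needed here.

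With both hypotheses verified, Lemma \ref{bourb} yields a unique group topology on $\Daaut$ with $\mathcal{F}$ as a base of neighbourhoods of the identity. For this topology $W_k(D)$ is open, since it contains every member of $\mathcal{F}$ and these are neighbourhoods of the identity, and it is compact, being a product of $k$ copies of the profinite group $W(D)$. A topological group possessing a compact open subgroup is locally compact; and since $W_k(D)$ is totally disconnected, so is $\Daaut$. Hence $\Daaut$ is a t.d.l.c.\ group, with the uniqueness being part of the conclusion of Lemma \ref{bourb}.

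Finally, I would settle the discreteness statement by elementary bookkeeping. The group $\Daaut$ is discrete if and only if the trivial subgroup is open, which (as $\mathcal{F}$ is a base at the identity) holds if and only if some member of $\mathcal{F}$ is trivial, i.e. if and only if the compact group $W_k(D)$ is discrete, hence finite. Since $W_k(D)$ is the product of $k$ copies of the infinitely iterated wreath product $W(D)$, it is either trivial or infinite; it is trivial exactly when $W(D)$ is trivial, which occurs precisely when $D$ is trivial. This yields the stated chain of equivalences: $\Daaut$ is discrete $\iff$ $W_k(D)$ is trivial $\iff$ $D$ is trivial.
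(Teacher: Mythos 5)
Your proof is correct, and although it rests on the same pillar as the paper's---Lemma \ref{bourb} applied to the family $\mathcal{F}$ of open subgroups of $W_k(D)$---you establish the crucial conjugation hypothesis by a genuinely different route. The paper deduces it from commensuration of $W_k(D)$ in $\Daaut$, checked only on generators: Proposition \ref{decomp1} shows $\Daaut$ is generated by $W_k(D) \cup V_{d,k}$, so Lemma \ref{commens-gen} reduces everything to Lemma \ref{commens}. You instead take an arbitrary $g = (\psi,T,T')$ and verify directly, by level bookkeeping and self-similarity of $W(D)$, that a sufficiently deep level stabilizer $U_m$ satisfies $U_m \subseteq g U_n g^{-1}$. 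Your stated reason for doing this is a genuine subtlety, not pedantry: abstract commensuration only yields that $g U g^{-1} \cap W_k(D)$ has \emph{finite index} in $W_k(D)$, and finite-index subgroups of a profinite group need not be open; indeed $W_k(D)$ generally has non-open finite-index subgroups (e.g.\ for $D = \mathbb{Z}/2$ it surjects continuously onto an infinite product of copies of $\mathbb{Z}/2$, which has dense index-two subgroups). So Lemma \ref{bourb} cannot be fed by commensuration as a purely abstract property; what makes the paper's route sound is that Lemma \ref{commens} proves more than commensuration---its proof describes $\sigma W_k(D)_\sigma \sigma^{-1}$ explicitly as a relabelling of sections, and this relabelling sends deep level stabilizers into prescribed level stabilizers---which is exactly the continuity statement your computation makes explicit. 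In short: the paper's argument is shorter and recycles Lemma \ref{commens}, which it needs again for the relators $R_1$ in Section \ref{sec-pres}, while yours is self-contained, bypasses the decomposition of Proposition \ref{decomp1} altogether, and surfaces the open-versus-finite-index issue that the paper's wording glosses over. Your handling of compactness, total disconnectedness, and the discreteness equivalences agrees with the intended argument.
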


\begin{rmq}
\begin{enumerate}
\item [1)] It is interesting to point out that whereas the topology on $\aut$ coincides with the compact-open topology induced from $\mathrm{Homeo}(\bord)$, this is no longer true for the group $\aaut$. Indeed, the inclusion $\aaut \hookrightarrow \mathrm{Homeo}(\bord)$ is continuous but has a non-closed image. In other words, the topology on $\aaut$ is strictly finer than the compact-open topology. Actually the image of $\aaut \hookrightarrow \mathrm{Homeo}(\bord)$ is even dense, because one can check that the group $V_{d,k}$ is a dense subgroup of the homeomorphism group of $\bord$ with respect to the compact-open topology. \\
\item [2)] We also insist on the fact that for any permutation group $D$, the inclusion $\Daaut \hookrightarrow \aaut$ is always continuous, but its image is never closed unless $D$ is the full permutation group $\mathrm{Sym}(d)$. Indeed, $\Daaut$ contains the subgroup $V_{d,k}$ which is dense in $\aaut$ by Remark \ref{dense}, and therefore $\Daaut$ is never closed inside $\aaut$ unless it is the whole group.
\end{enumerate}
\end{rmq}

\subsection{Preliminaries} \label{sec-prel}

In this section we establish preliminary results about the groups $\Daaut$. Recall that $D \leq \mathrm{Sym}(d)$ is a finite permutation group, and we define recursively a family of finite subgroups of $\mathrm{Aut}(\mathcal{T}_d)$ by $D_1 = D$ and $D_{n+1} = D \wr D_n$ for every $n \geq 1$, where the permutational wreath product is associated with the natural action of $D_n$ on the $d^n$ vertices of level $n$ of $\mathcal{T}_d$. We denote by $D_{\infty}$ the subgroup generated by the family $(D_n)$ (which also coincides with the increasing union of the family $(D_n)$) and by $W(D)$ the closure of $D_{\infty}$ in $\mathrm{Aut}(\mathcal{T}_d)$. For $n \in \left\{1, \ldots, \infty \right\}$ we will also denote by $D_{n}^k$ the subgroup of $W_k(D)$ fixing pointwise the first level of $\tree$ and acting by an element of $D_{n}$ on each subtree rooted at the first level. Note that these groups have a natural decomposition $D_{n}^k = D_{n}^{(1)} \times \ldots \times D_{n}^{(k)}$, where $D_{n}^{(i)}$ is the subgroup of elements acting only on the $i$th subtree rooted at level one. 

If $\sigma = (\psi,T,T') \in V_{d,k}$, we let $W_k(D)_{\sigma}$ be the subgroup of $W_k(D)$ consisting of automorphisms which are the identity on the subtree $T$. Note that $W_k(D)_{\sigma}$ always contains some neighbourhood of the identity, and is consequently an open subgroup of $W_k(D)$. 

\begin{lem} \label{commens}
For every $\sigma \in V_{d,k}$, we have the inclusion \[ \sigma \, W_k(D)_{\sigma} \, \sigma^{-1} \subset W_k(D). \] In particular $W_k(D)$ is commensurated by $V_{d,k}$.
\end{lem}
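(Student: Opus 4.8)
The plan is to establish the inclusion first and then deduce commensuration by a finite-index count. Write $\sigma = (\psi,T,T')$ for the canonical representative, so that $T$ is the domain tree and $W_k(D)_{\sigma}$ consists of the elements of $W_k(D)$ that are the identity on $T$. The starting observation is that any $g \in W_k(D)_{\sigma}$, fixing $T$ pointwise, fixes every leaf $\ell$ of $T$ and therefore preserves each component $\tree^\ell$ of $\tree \setminus T$, acting on it by some automorphism $g|_{\tree^\ell}$. Since $g$ acts on each subtree rooted at level one by an element of $W(D)$, and since $\tree^\ell$ sits below such a subtree, the restriction $g|_{\tree^\ell}$ is a section of a $W(D)$-element; by the self-similarity built into the wreath recursion $W(D) = D \ltimes W(D)^d$, this section again lies in $W(D)$ after the canonical identification of $\tree^\ell$ with $\treed$.

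Next I would compute the conjugate $\sigma g \sigma^{-1}$ as a homeomorphism of $\bord$. Because $\psi$ maps $\tree^\ell$ isomorphically onto the component $\tree^{\ell'}$ of $\tree \setminus T'$ indexed by $\ell' = \sigma(\ell)$, and $g$ preserves each $\tree^\ell$, the composition $\sigma g \sigma^{-1}$ preserves each $\tree^{\ell'}$ and acts on it by $\psi|_{\tree^\ell}\, g|_{\tree^\ell}\, (\psi|_{\tree^\ell})^{-1}$. Thus $\sigma g \sigma^{-1}$ is represented by the automorphism $h$ obtained by extending these conjugates by the identity on $T'$ (they glue along the fixed leaves $\ell'$); in particular $h \in \aut$ fixes $T'$, hence the first level, pointwise. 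The crux is to identify the local action: since $\sigma \in V_{d,k}$, the isomorphism $\psi|_{\tree^\ell}$ is order-preserving, so it agrees with the canonical identifications $\tree^\ell \cong \treed \cong \tree^{\ell'}$. Conjugating $g|_{\tree^\ell} \in W(D)$ by these canonical identifications leaves it inside $W(D)$, so $h$ acts by $W(D)$ on each component of $\tree \setminus T'$.

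It remains to reassemble these local $W(D)$-actions into a global element of $W_k(D)$. For this I would record the following consequence of the recursion $W(D) = D \ltimes W(D)^d$: an automorphism of $\treed$ that fixes a finite complete rooted subtree pointwise and restricts to an element of $W(D)$ on each hanging component lies in $W(D)$ (induction on the number of carets, the trivial top-level permutation lying in $D$ at each stage). Applying this inside each subtree of $\tree$ rooted at level one shows $h \in W_k(D)$, which gives $\sigma\, W_k(D)_{\sigma}\, \sigma^{-1} \subseteq W_k(D)$.

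Finally, for the commensuration statement, I would use that $W_k(D)_{\sigma}$ is open in the compact group $W_k(D)$, hence of finite index (as recorded just before the lemma). Then $\sigma W_k(D) \sigma^{-1} \cap W_k(D)$ contains $\sigma W_k(D)_{\sigma} \sigma^{-1}$, which has finite index in $\sigma W_k(D) \sigma^{-1}$; running the same argument with $\sigma^{-1}$ (whose domain tree is $T'$) shows the intersection also has finite index in $W_k(D)$. As $V_{d,k}$ is a group, this proves that $W_k(D)$ is commensurated by $V_{d,k}$. I expect the main obstacle to be the bookkeeping in the middle step: one must check that the order-preserving hypothesis is precisely what guarantees the local conjugate stays inside $W(D)$ (an arbitrary forest isomorphism would conjugate $W(D)$ to a possibly different subgroup of $\autd$), and that the self-similarity of $W(D)$ is used in both directions, namely to restrict $g$ to $\tree^\ell$ and to reassemble $h$ from its components.
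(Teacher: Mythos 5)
Your proof is correct and takes essentially the same route as the paper's: the paper likewise checks that $\sigma u \sigma^{-1}$ is represented by a triple $(\psi',T',T')$ permuting the components of $\tree \setminus T'$ trivially, and identifies it with the tree automorphism that is the identity on $T'$ and acts like $\psi'$ below, hence lies in $W_k(D)$. Your write-up merely fills in the details the paper leaves to the reader, namely that order-preservation of $\psi$ makes the local conjugation happen through the canonical identifications, that self-similarity of $W(D)$ handles restriction and reassembly, and the finite-index bookkeeping for the commensuration claim.
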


\begin{proof}
If $\sigma = (\psi,T,T')$ and $u \in W_k(D)_{\sigma}$, the reader will easily check that the element  $\sigma u \sigma^{-1} \in \Daaut$ is represented by a triple $(\psi',T',T')$, where $\psi'$ permutes trivially the connected components of $\tree \setminus T'$. Now if we consider the tree automorphism $u' \in W_k(D)$ being the identity on $T'$ and acting on $\tree \setminus T'$ like $\psi'$, it is clear that $u'$ is represented by the triple $(\psi',T',T')$, and therefore $\sigma u \sigma^{-1} = u' \in W_k(D)$.
\end{proof}

The next result yields a decomposition of the group $\Daaut$ in terms of the two subgroups $W_k(D)$ and $V_{d,k}$. It will be essential for proving Theorem \ref{thma} and Theorem \ref{thmb}.

\begin{prop} \label{decomp1}
For any $g \in \Daaut$ there exists $(u,v) \in W_k(D) \times V_{d,k}$ such that $g = uv$.
\end{prop}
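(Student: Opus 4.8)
The plan is to split $g$ into its order-preserving part, which will land in $V_{d,k}$, and a remaining part fixing a finite subtree and acting locally like $W(D)$, which I will argue lies in $W_k(D)$. Represent $g$ by a triple $(\psi,T,T')$ with $T,T'$ finite rooted complete subtrees and $\psi:\tree\setminus T\to\tree\setminus T'$ a forest isomorphism belonging to $W(D)$ on each connected component. Such a $\psi$ induces a bijection $\ell\mapsto\ell'$ between the leaves of $T$ and the leaves of $T'$, namely the one sending the leaf below which a component sits to the leaf below which its image sits. Denote by $C_\ell$ the component of $\tree\setminus T$ hanging below $\ell$, and by $w_\ell\in W(D)$ the element describing $\psi|_{C_\ell}$ after the natural identification of $C_\ell$ and $C_{\ell'}$ with $\treed$.

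First I would define $v\in V_{d,k}$ to be the locally order-preserving element represented by $(\phi,T,T')$, where $\phi$ realizes the same leaf bijection $\ell\mapsto\ell'$ as $\psi$ but preserves the order of the boundary on each component; this is precisely the canonical order-preserving forest isomorphism attached to that leaf bijection, so it belongs to $V_{d,k}$ by definition. I then set $u:=gv^{-1}$ and compute a representative. Since $v^{-1}$ is represented by $(\phi^{-1},T',T)$, the product $u$ is represented by a triple $(\psi',T',T')$, so $u$ fixes $T'$ pointwise and maps each component $C_{\ell'}$ of $\tree\setminus T'$ to itself. On $C_{\ell'}$ the map $u$ equals $\psi|_{C_\ell}\circ(\phi|_{C_\ell})^{-1}$; because $\phi$ is, under the natural identification of each component with $\treed$, the identity, the restriction of $u$ to $C_{\ell'}$ is, under that same identification, exactly the element $w_\ell\in W(D)$.

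It remains, and this is the main point, to upgrade $u$ from an almost automorphism fixing the finite tree $T'$ and acting by $W(D)$ below each leaf into an honest element of $W_k(D)$. Since $T'$ contains the whole first level of $\tree$, it suffices to check that the restriction of $u$ to each subtree $\tree^{a_i}$ lies in $W(D)$; there $u$ fixes the finite complete rooted subtree $T'\cap\tree^{a_i}$ of $\treed$ pointwise and restricts to an element of $W(D)$ below each of its leaves. That such an automorphism of $\treed$ necessarily belongs to $W(D)$ is exactly the self-similarity encoded in the definition $D_{n+1}=D\wr D_n$, which gives the decomposition of $W(D)$ as the trivial permutation (lying in $D$) at an internal vertex combined with copies of $W(D)$ in the descendant subtrees. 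By induction on the size of the fixed subtree, an automorphism fixing it pointwise and acting by $W(D)$ below each leaf thus lies in $W(D)$. I expect this verification — keeping the identifications of the components with $\treed$ coherent so that the composition $\psi\circ\phi^{-1}$ genuinely stays inside $W(D)$, and invoking the closure property of $W(D)$ — to be the only delicate step. Granting it, $u\in W_k(D)$ and $g=uv$ is the desired decomposition.
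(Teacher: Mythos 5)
Your proof is correct and follows essentially the same route as the paper: take $v\in V_{d,k}$ to be the order-preserving almost automorphism realizing the same leaf bijection as $g$, and observe that $u=gv^{-1}$ fixes $T'$ pointwise and acts by elements of $W(D)$ below its leaves, hence lies in $W_k(D)$. The only difference is that you spell out (via induction and the wreath recursion $D_{n+1}=D\wr D_n$) the assembly fact that such an automorphism belongs to $W(D)$, which the paper simply asserts as a consequence of the construction of $W(D)$.
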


\begin{proof}
Let $(\psi,T,T')$ be a triple representing $g \in \Daaut$. Let us consider the element $v \in \Daaut$ represented by the triple $(\xi,T,T')$ where $\xi$ is defined by declaring that each tree of the forest $\tree \setminus T$ is globally sent onto its image by $\psi$, but so that $\xi$ is order-preserving on each connected component of $\tree \setminus T$. Clearly we have $v \in V_{d,k}$. Now the discrepancy between $g$ and $v$ can be filled by performing the rooted tree automorphism induced by $g$ on each subtree rooted at a leaf of $T'$. But all of these can be achieved at the same time by an element of $W_k(D)$, namely the automorphism being the identity on $T'$ and acting as the desired rooted tree automorphism on each connected component of $\tree \setminus T'$.
\end{proof}

\begin{rmq} \label{dense}
Actually in Proposition \ref{decomp1}, $W_k(D)$ can be replaced by the pointwise stabilizer of the $n$th level of $\tree$ in $W_k(D)$, for every $n \geq 1$, the proof being the same. It yields in particular that $V_{d,k}$ is a dense subgroup of $\Daaut$.
\end{rmq}

Now given $g \in \Daaut$, there is not a unique $(u,v) \in W_k(D) \times V_{d,k}$ such that $g = uv$ because the two subgroups $W_k(D)$ and $V_{d,k}$ have a non-trivial intersection (as soon as $D$ is non-trivial). The measure of how this decomposition fails to be unique naturally leads to the study of the intersection of these two subgroups.

\begin{lem} \label{lem-inter}
The intersection between $V_{d,k}$ and $W_k(D)$ in $\Daaut$ is $D_{\infty}^k$. 
\end{lem}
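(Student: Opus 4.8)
The plan is to prove the two inclusions $D_\infty^k \subseteq V_{d,k} \cap W_k(D)$ and $V_{d,k} \cap W_k(D) \subseteq D_\infty^k$ separately. For the first, the containment $D_\infty^k \subseteq W_k(D)$ holds by definition, since each $D_n$ sits inside $W(D)$. To see $D_\infty^k \subseteq V_{d,k}$, I would fix $u \in D_n^k$ and recall that an element of $D_n$ has trivial local permutation at every vertex of $\treed$ of level $\geq n$; hence $u$ acts trivially locally at every vertex of $\tree$ of level $\geq n+1$. Taking $T$ to be the complete rooted subtree of $\tree$ spanned by all vertices up to level $n+1$, the automorphism $u$ maps each component $\tree^v$ of $\tree \setminus T$ onto $\tree^{u(v)}$ by the coordinate-preserving isomorphism, which is order-preserving on the boundary. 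Thus $u$ is locally order-preserving, i.e. $u \in V_{d,k}$.

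For the reverse inclusion, I would start with $g \in V_{d,k} \cap W_k(D)$ and exploit that $g$ is simultaneously a genuine tree automorphism (fixing the first level and acting by some $w_i \in W(D)$ on the $i$-th subtree rooted at level one) and locally order-preserving. Choosing a representative $(\psi,T,T')$ witnessing the order-preserving condition and enlarging $T$ to the full balanced subtree up to some level $n+1$ (the order-preserving property is inherited by subcomponents, so this enlargement is harmless), I observe that on each component $\tree^v$, with $v$ a leaf of $T$, the restriction of $g$ is an order-preserving isomorphism onto $\tree^{g(v)}$. A straightforward induction on levels shows that an order-preserving isomorphism between two ordered copies of $\treed$ must be the canonical coordinate-preserving one; consequently $g$ has trivial local permutation at every vertex of $\tree$ of level $\geq n+1$. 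Equivalently, each $w_i \in W(D)$ has trivial local permutation at every vertex of $\treed$ of level $\geq n$.

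It then remains to prove the key point, which I expect to be the main obstacle: an element $w \in W(D)$ whose local permutations vanish at every vertex of level $\geq n$ necessarily lies in $D_n \subseteq D_\infty$. This is where the definition of $W(D)$ as the \emph{closure} of $D_\infty$ genuinely enters. I would consider the truncation homomorphism $\tau_n \colon \autd \to \mathrm{Aut}(\treed^{(n)})$ onto the automorphism group of the depth-$n$ finite rooted tree, which is continuous with kernel the pointwise stabilizer of level $n$. Since $\tau_n(D_j) = D_n$ for every $j \geq n$, the image $\tau_n(D_\infty)$ equals the finite, hence closed, group $D_n$, so continuity of $\tau_n$ forces $\tau_n(W(D)) \subseteq D_n$. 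Finally, because $w$ is supported on the first $n$ levels, it is the unique automorphism supported there with truncation $\tau_n(w)$; the element of $D_n$ with that same truncation has the same support, so it coincides with $w$, giving $w \in D_n$.

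Applying this to each $w_i$ yields $g \in D_n^k \subseteq D_\infty^k$, which completes the proof. The only step requiring real care is the last one: the naive expectation that a finitely supported element of a closure is automatically in the generating subgroup is false in general, and it is precisely the compatibility $\tau_n(D_j) = D_n$ together with the finiteness (hence closedness) of $D_n$ that makes the argument go through for the wreath closure $W(D)$.
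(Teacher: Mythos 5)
Your proof is correct and takes essentially the same route as the paper: the easy inclusion, then the observation that a locally order-preserving automorphism has eventually trivial local permutations, combined with the fact that any element of $W_k(D)$ agrees with an element of $D_n^k$ on the first $n$ levels (your truncation-homomorphism argument is just a formalization of this closure/approximation property), which forces equality. The paper compresses this into: for every $n$ there exists $g_n \in D_n^k$ acting like $g$ on the first $n$ levels, and since $g \in V_{d,k}$ is eventually order-preserving, $g = g_n$ for $n$ large enough.
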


\begin{proof}
$D_{n}^k$ lies inside $V_{d,k}$ and $W_k(D)$ for any $n \geq 1$, so the inclusion $D_{\infty}^k \subset V_{d,k} \cap W_k(D)$ is clear. To prove the reverse inclusion, let $g$ be an element of $V_{d,k} \cap W_k(D)$. Such an element $g$ is an automorphism of $\tree$ and therefore does act on the tree fixing setwise each level, so it is enough to prove that there exists an element of $D_{\infty}^k$ acting like $g$ on $\tree$. Since $g \in W_k(D)$, for every $n \geq 1$ there exists $g_n \in D_{n}^k$ acting like $g$ on the first $n$ levels of $\tree$. But now since $g \in V_{d,k}$, it is eventually order-preserving and therefore $g=g_n$ for $n$ large enough, which completes the proof.
\end{proof}




The end of this section is devoted to establishing Lemma \ref{increase}, which will be applied in the proof of Lemma \ref{lem-i} in Section \ref{sec-pres}.

Recall that $(a_1, \ldots a_k)$ are the ordered vertices of level one of $\tree$, and that $\tree^{a_i}$ is the full subtree of $\tree$ rooted at $a_i$. Recall also that $D_{\infty}^{(i)}$ are the automorphisms of $\tree$ acting by an element of $D_{\infty}$ on $\tree^{a_i}$, and acting trivially outside $\tree^{a_i}$. Finally recall that we denote by $(a_ib_1, \ldots, a_ib_d)$ the ordered neighbours of $a_i$ in $\tree^{a_i}$. In what follows, by convention indexes will be taken modulo $k$ (for example $a_{k+1}$ will denote the vertex $a_1$).

For every $i=1 \ldots k$ and $j=1 \ldots d$, we define an element $\delta_{i,j} = (\psi,T,T') \in V_{d,k}$ by the following manner: \begin{itemize} 
\item $T$ is the smallest finite complete rooted subtree containing the $d$ descendants of $a_i$;
\item $T'$ is the smallest finite complete rooted subtree containing the $d$ descendants of $a_{i+1}$; 
\item $\psi$ is defined by the formulas
\begin{itemize} 
\item $\psi(a_{\ell}) = a_{\ell}$ for every $\ell \notin \left\{ i,i+1\right\}$;
\item $\psi(a_{i+1}) = a_{i+1}b_j$; 
\item $\psi(a_ib_{\ell}) = a_{i+1}b_{\ell}$ for every $\ell \neq j$;
\item $\psi(a_ib{j}) = a_i$.
\end{itemize}
\end{itemize}
For example the diagram of $\delta_{1,j}$ is represented in Figure \ref{figure-delta} in the case $k=2$. We denote by $\Delta$ the set of $\delta_{i,j}$, for $i = 1 \ldots k$, $j = 1 \ldots d$.

\begin{figure}
\psset{unit=23pt}
\begin{pspicture}(12,4)
\psline[arrowsize=0.16]{-}(2,2.5)(3,3.5)
\psline[arrowsize=0.16]{-}(3,3.5)(4,2.5)
\psdot(4,2.5)
\rput(4,2){$d+1$} 
\psline[arrowsize=0.16]{->}(5.25,2)(6.75,2)
\rput(6,2.5){$\psi$}
\psline[arrowsize=0.16]{-}(8,2.5)(9,3.5)
\psline[arrowsize=0.16]{-}(9,3.5)(10,2.5)
\psdot(8,2.5)
\rput(8,2){$j$}

\psline[arrowsize=0.16]{-}(2,2.5)(1.2,1)
\psdot(1.2,1)
\rput(1.2,0.5){$1$}
\psline[arrowsize=0.16]{-}(2,2.5)(2,1)
\psdot(2,1)
\rput(2,0.5){$j$}
\psline[arrowsize=0.16]{-}(2,2.5)(2.8,1)
\psdot(2.8,1)
\rput(2.8,0.5){$d$}
\rput(1.72,1.3){$\cdots$}
\rput(2.35,1.3){$\cdots$}

\psline[arrowsize=0.16]{-}(10,2.5)(9.2,1)
\psdot(9.2,1)
\rput(9.2,0.5){$1$}
\psline[arrowsize=0.16]{-}(10,2.5)(10,1)
\psdot(10,1)
\rput(10,0.5){$d+1$}
\psline[arrowsize=0.16]{-}(10,2.5)(10.8,1)
\psdot(10.8,1)
\rput(10.8,0.5){$d$}
\rput(9.72,1.3){$\cdots$}
\rput(10.35,1.3){$\cdots$}
\end{pspicture}
\caption{The diagram of $\delta_{1,j}$ when $k=2$.}
\label{figure-delta}
\end{figure}
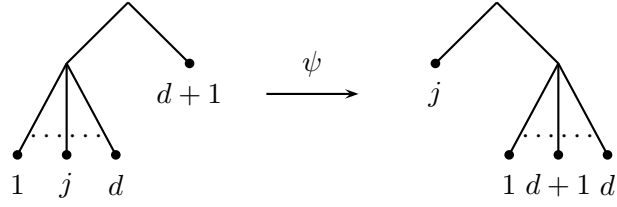

\begin{lem} \label{increase}
Let $i=1 \ldots k$, $j=1 \ldots d$, and let $n \geq 1$. Assume that $u \in D_{\infty}^{(i)}$ acts trivially outside $\tree^{a_ib_j}$, and $u$ has at most $n+1$ carets. Then the element $\delta_{i,j} u \delta_{i,j}^{-1}$ belongs to $D_{\infty}^{(i)}$ and has at most $n$ carets.
\end{lem}

\begin{proof}
This is a direct consequence of the fact that $\delta_{i,j}$ maps the subtree $\tree^{a_ib_j}$ onto the the subtree $\tree^{a_i}$, and is order-preserving on $\tree^{a_ib_j}$.
\end{proof}

\section{Presentation of $\Daaut$} \label{sec-pres}

In this section we write down an explicit presentation of the group $\Daaut$ for any $k \geq1, d \geq 2$ and $D \leq \mathrm{Sym}(d)$, and prove Theorem \ref{thma} and Theorem \ref{thmb}.

Let $\Sigma$ denote a finite generating set of the group $V_{d,k}$, which is supposed to contain $\Delta$ (recall that $\Delta$ has been defined at the end of Section \ref{aaut-type}). Enlarging $\Sigma$ if necessary, we can also assume that $\Sigma$ is saturated by Lemma \ref{sat}. This implies the following:

\begin{lem} \label{exchange}
We have the inclusion $\Sigma \, W_k(D) \subset W_k(D) \, \Sigma$.
\end{lem}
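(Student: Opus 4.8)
Since $\Sigma W_k(D) \subset W_k(D)\Sigma$ is equivalent to the assertion that $\sigma w \in W_k(D)\Sigma$ for every $\sigma \in \Sigma$ and every $w \in W_k(D)$, I would fix such a $\sigma = (\psi,T,T')$ (taken to be its canonical representative) and such a $w$, and produce an explicit factorization $\sigma w = w' \sigma'$ with $w' \in W_k(D)$ and $\sigma' \in \Sigma$. The whole point is to arrange that the locally order-preserving factor $\sigma'$ coming out of the decomposition has domain tree an automorphic image of $T$ and range tree exactly $T'$, so that the definition of saturated subsets can be invoked.

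\textbf{Constructing a good representative of $\sigma w$.} The automorphism $w$ is a forest isomorphism $\tree \setminus w^{-1}(T) \to \tree \setminus T$ (indeed $w(w^{-1}(T)) = T$ and $w^{-1}(T)$ is again a finite complete rooted subtree), so $w$ is represented by the triple $(w, w^{-1}(T), T)$. Since the range tree $T$ of this triple coincides with the domain tree of $\sigma = (\psi,T,T')$, composing in $\mathrm{Homeo}(\bord)$ gives the representative
\[
\sigma w = (\psi \circ w,\, w^{-1}(T),\, T').
\]
Thus $\sigma w$ admits a representative whose domain tree is $w^{-1}(T)$ and whose range tree is still $T'$. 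I would then apply Proposition \ref{decomp1} to $g = \sigma w$, feeding it precisely this triple: its proof constructs $v \in V_{d,k}$ as the locally order-preserving element with domain tree $w^{-1}(T)$ and range tree $T'$ (the one inducing the same leaf bijection as $\psi \circ w$), together with $w' = (\sigma w) v^{-1} \in W_k(D)$, so that $\sigma w = w' v$.

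\textbf{Applying saturation, and the main obstacle.} Write $\sigma' = v = (\psi', w^{-1}(T), T') \in V_{d,k}$. Because $w \in W_k(D) \leq \aut$, we have $w^{-1} \in \aut$, and $w^{-1}(T)$ is the image of the domain tree $T$ of $\sigma \in \Sigma$ under this automorphism. Hence $\sigma'$ is an element of $V_{d,k}$ of the form $(\psi', w^{-1}(T), T')$, which is exactly the shape controlled by the definition of a saturated subset; since $\Sigma$ is saturated, this forces $\sigma' \in \Sigma$. Combining, $\sigma w = w' \sigma' \in W_k(D)\Sigma$, as required. The delicate part of the argument is purely the bookkeeping of domain and range trees under composition: one must be careful that it is the \emph{domain} side of $\sigma w$ that is distorted by $w$ (becoming $w^{-1}(T)$) while the range side remains $T'$, since it is precisely this asymmetry that matches the asymmetry in the definition of saturated subsets; verifying that $w' = (\sigma w)\sigma'^{-1}$ indeed permutes trivially the components of $\tree \setminus T'$ and acts there by $W(D)$ is then exactly the content already recorded in Proposition \ref{decomp1} and needs no separate computation.
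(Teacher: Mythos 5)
Your proof is correct and takes essentially the same route as the paper: both apply the construction from the proof of Proposition \ref{decomp1} to $\sigma w$ via the representative $(\psi \circ w,\, w^{-1}(T),\, T')$, note that the resulting $V_{d,k}$-factor has domain tree $w^{-1}(T)$ and range tree $T'$, and conclude by saturation of $\Sigma$. Your write-up merely makes explicit the tree bookkeeping that the paper's proof leaves implicit.
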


\begin{proof}
It follows from the proof of Proposition \ref{decomp1} that any $\sigma_1 u_1 \in \Sigma \, W_k(D)$ can be written $u_2 \sigma_2$ with $u_2 \in W_k(D)$ and $\sigma_2 \in V_{d,k}$ being of the form $(\psi,u^{-1}(T),T')$, where $T$ is the domain tree of $\sigma$. Since $\Sigma$ is saturated, $\sigma_2$ belongs to $\Sigma$ and therefore $\sigma_1 u_1 \in W_k(D) \, \Sigma$.
\end{proof}

According to Proposition \ref{decomp1}, the set $S = \Sigma \cup W_k(D)$ is a generating set of $\Daaut$. The strategy to prove Theorem \ref{thma} will be to list some particular relations between the elements of $S$ satisfied in the group $\Daaut$, and then to prove that they generate all the relations in $\Daaut$.

\begin{enumerate}
\item [$\left(R_{\Sigma}\right)$] According to Theorem \ref{fp} there exists a finite set of words $R_{\Sigma} \leq \Sigma^*$ so that $\left\langle \Sigma \mid R_{\Sigma} \right\rangle$ is a presentation of $V_{d,k}$.
\end{enumerate}

\begin{enumerate}
\item [$\left(R_{D}\right)$] We let $R_{D}$ be the set of words of the form $u_1u_2u_3^{-1}$, $u_i \in W_k(D)$, whenever the relation $u_1u_2=u_3$ is satisfied in the group $W_k(D)$.
\end{enumerate}

\begin{enumerate}
\item [$\left(R_{1}\right)$] The set of relations $R_{1}$ will correspond to commensurating relations in $\Daaut$. Recall that if $\sigma \in V_{d,k}$ and $u \in W_k(D)_{\sigma}$ then $\sigma u \sigma^{-1} \in W_k(D)$ by Lemma \ref{commens}. We let $R_{1}$ be the set of words of the form $\sigma u_1 \sigma^{-1} u_2^{-1}$, where $\sigma \in \Sigma, u_1 \in W_k(D)_{\sigma}, u_2 \in W_k(D)$, whenever the relation $\sigma u_1 \sigma^{-1} = u_2$ holds in $\Daaut$. 
\end{enumerate}

\begin{enumerate}
\item [$\left(R_{2}\right)$] We add relations corresponding to the fact that the subgroup $D_1^{k}$ of $\Daaut$ lies in the intersection of $V_{d,k}$ and $W_k(D)$. More precisely, for every $i \in \left\{1, \ldots, k\right\}$ and every $u \in D_1^{(i)}$, we choose a word $w_u \in \Sigma^*$ so that $u = w_u$ in $\Daaut$. We denote by $R_2$ the set of words $u w_u^{-1}$, and by $r_i$ the maximum word length of the words $w_u$ when $u$ ranges over $D_1^{(i)}$.
\end{enumerate}

\begin{enumerate}
\item [$\left(R_{3}\right)$]  By Lemma \ref{exchange}, for every $\sigma_1 \in \Sigma$ and $u_1 \in W_k(D)$ we can pick some $u_2 \in W_k(D)$ and $\sigma_2 \in \Sigma$ so that $\sigma_1 u_1 = u_2 \sigma_2$ in $\Daaut$. We denote by $R_3$ the set of words $\sigma_1 u_1 \sigma_2^{-1} u_2^{-1}$.
\end{enumerate}

Denote by $R = R_{\Sigma} \cup R_{D} \cup_i R_i$ the union of all these relations. Note that elements of $R$ have bounded length with respect to the compact generating set $S = \Sigma \cup W_k(D)$ of $\Daaut$. 

We let $G$ be the group defined by the presentation $\left\langle S \mid R \right\rangle$, that is we have a short exact sequence \[ 1 \rightarrow \mathcal{R} = \left\langle \! \left\langle R \right\rangle \! \right\rangle \rightarrow F_S \rightarrow G \rightarrow 1, \] where $F_S$ is the free group over the set $S$ and $\left\langle \! \left\langle R \right\rangle \! \right\rangle$ is the normal subgroup generated by $R$. Denote by $a: F_S \rightarrow \left[0, + \infty \right]$ the corresponding area function, which by definition associates to $w \in \mathcal{R}$ the least integer $n$ so that $w$ is a product of at most $n$ conjugates of elements of $R$, and $a(w) = + \infty$ if $w \notin \mathcal{R}$. We also define the associated cost function $c: F_S \times F_S \rightarrow \left[0, + \infty \right]$ by $c(w_1,w_2) = a(w_1^{-1}w_2)$. This function estimates the cost of converting $w_1$ to $w_2$, or the cost of going from $w_1$ to $w_2$, in the sense that $c(w_1,w_2)$ is the distance in $F_S$ between $w_1$ and $w_2$ with respect to the word metric associated to the union of conjugates of $R$. In particular the cost function is symmetric and satisfies the triangular inequality $c(w_1,w_3) \leq c(w_1,w_2) + c(w_2,w_3)$ for every $w_1,w_2,w_3 \in F_S$. This, combined with the bi-invariance of the cost function, yields the following inequality, which will be used repeatedly: for every $\ell \geq 1$ and every $w_1, \ldots, w_{\ell},w'_1,\ldots,w'_{\ell} \in F_S$, we have: \[ c(w_1 \ldots w_{\ell},w'_1 \ldots w'_{\ell}) \leq \sum_{i=1}^{\ell} c(w_i,w'_i). \]

Two words $w_1, w_2 \in F_S$ are said to be homotopic if they represent the same element of $G$, i.e.\ if $c(w_1,w_2) < + \infty$. 

We are now able to state the main theorem of this section, which implies both Theorem \ref{thma} and Theorem \ref{thmb}.

\begin{thm} \label{presentation}
The natural map $G \rightarrow \Daaut$ is an isomorphism. Furthermore, the Dehn function of the presentation $\left\langle S \mid R \right\rangle$ is asymptotically bounded by that of $V_{d,k}$.
\end{thm}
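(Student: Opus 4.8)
The plan is to prove the two assertions of Theorem \ref{presentation} simultaneously by establishing a single quantitative statement: any word $w \in F_S$ representing the identity in $\Daaut$ can be reduced to the trivial word at a cost controlled by the Dehn function of $V_{d,k}$, evaluated at (a constant times) the length of $w$. Surjectivity of $G \to \Daaut$ is immediate from Proposition \ref{decomp1}, since $S = \Sigma \cup W_k(D)$ generates $\Daaut$. The content is injectivity together with the area bound, and the natural strategy is a normal-form argument: first I would show that every word in $F_S$ is homotopic, at controlled cost, to a word of the shape $u\,\sigma$ with $u \in W_k(D)$ a single generator and $\sigma$ a word in $\Sigma^*$, and then I would analyze the cost of trivializing such a normal-form word that represents the identity.

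The first step uses relations $(R_3)$ and $(R_D)$. Given an arbitrary word in the letters of $S$, I would push all the $W_k(D)$-letters to the left using the exchange relations $(R_3)$ of Lemma \ref{exchange}: each time a $\Sigma$-letter sits to the left of a $W_k(D)$-letter, one application of $(R_3)$ swaps them at unit cost, replacing $\sigma_1 u_1$ by $u_2 \sigma_2$. After finitely many swaps (bounded by the square of the word length, which is harmless for the asymptotics) all $W_k(D)$-letters are gathered on the left, and the relations $(R_D)$ let me collapse the resulting product of $W_k(D)$-letters into a single letter $u \in W_k(D)$ at a cost linear in their number. This yields a homotopy $w \simeq u\sigma$ with $\sigma \in \Sigma^*$, at cost $O(|w|^2)$.

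Now suppose $w$, hence $u\sigma$, represents the identity in $\Daaut$. Then $u = \sigma^{-1}$ in $\Daaut$, so $u$ lies in $V_{d,k} \cap W_k(D)$, which by Lemma \ref{lem-inter} equals $D_\infty^k$. The decisive point is that although $u$ is a single generator, the \emph{level} at which this intersection element acts is not bounded in terms of $|w|$ a priori; to trivialize $u$ using $(R_2)$, which only knows how to rewrite first-level elements $D_1^{(i)}$ as words in $\Sigma$, I must first drive the support of $u$ down to level one. This is exactly what Lemma \ref{increase} is designed for: conjugating a component of $u$ supported on $\tree^{a_ib_j}$ by the generator $\delta_{i,j} \in \Delta \subseteq \Sigma$ lowers its level by one and decreases its caret count, and by Proposition \ref{carets} the number of carets, hence the number of such conjugations needed, is controlled \emph{linearly} by $|\sigma|_\Sigma$, which is comparable to $|w|$. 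This is the step I expect to be the main obstacle, and where the saturation hypothesis and the careful caret bookkeeping of Section \ref{sec-Hig} earn their keep: one must organize the successive conjugations so the intermediate words stay of bounded generator-length and the total cost stays linear in $|w|$, using the commensurating relations $(R_1)$ to reabsorb the $W_k(D)$-parts produced along the way.

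Once $u$ has been reduced to a product of first-level elements, relations $(R_2)$ rewrite it as a word $\tilde\sigma \in \Sigma^*$ at bounded cost per factor, so that $u\sigma$ becomes a word $\tilde\sigma\,\sigma$ lying entirely in $\Sigma^*$ and still representing the identity, now in $V_{d,k}$. At this final stage the relations $(R_\Sigma)$ take over: since $\langle \Sigma \mid R_\Sigma\rangle$ presents $V_{d,k}$, trivializing $\tilde\sigma\sigma$ costs at most $\delta_{V_{d,k}}$ of its length, which is $\preccurlyeq \delta_{V_{d,k}}(C|w|)$. Summing the three contributions — the quadratic cost of normalization, the linear cost of the level-descent, and the $\delta_{V_{d,k}}$ cost of the final $V_{d,k}$-reduction — and noting that $\delta_{V_{d,k}}$ is at least quadratic (as remarked after Theorem \ref{thmb}, the group is not hyperbolic), the quadratic term is absorbed and we obtain $a(w) \preccurlyeq \delta_{V_{d,k}}(|w|)$. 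This proves both that $w$ is null-homotopic in $G$ (hence $G \to \Daaut$ is injective) and the asserted Dehn function bound.
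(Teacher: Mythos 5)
Your proposal is correct and follows essentially the same route as the paper's proof: normalization of an arbitrary word into the form $W_k(D)\,\Sigma^*$ using $(R_3)$ and $(R_D)$ (the paper's Lemma \ref{u-left}), identification of the leftover letter as an element of $D_{\infty}^k$ whose caret count is linearly bounded via Lemma \ref{lem-inter} and Proposition \ref{carets}, rewriting that element as a $\Sigma$-word at linear cost by caret-decreasing conjugation by elements of $\Delta$ together with $(R_1)$, $(R_2)$, $(R_D)$ (the paper's Lemma \ref{lem-i} and Corollary \ref{u-replace}), and a final reduction inside the presentation $\left\langle \Sigma \mid R_{\Sigma} \right\rangle$ costing $\delta_{V_{d,k}}$, with lower-order terms absorbed since $\delta_{V_{d,k}}$ is at least quadratic. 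The only deviation is in the normalization step, where you gather the $W_k(D)$-letters by naive adjacent swaps at cost $O(n^2)$ whereas the paper uses a divide-and-conquer argument achieving $O(n\log n)$; both suffice for the theorem as stated, the paper's sharper estimate only mattering for the intermediate statement $a(w) \leq cn\log(n) + \delta(cn)$ of Proposition \ref{dehn}.
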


It is clear that the map from $G$ to $\Daaut$ is a well defined morphism because relations $R_{\Sigma}, R_{D}, (R_i)$ are satisfied in $\Daaut$, and it is onto because $S$ generates the group $\Daaut$. So proving the first claim comes down to proving that this morphism is injective, i.e.\ any word in $F_S$ representing the identity in the group $\Daaut$ already represents the trivial element in the group $G$. This will be achieved, as well as the proof of the upper bound on the Dehn function, in Proposition \ref{dehn}, using both geometric and combinatorial arguments.

The goal of Proposition \ref{u-replace} is to prove that relations in the group $\Daaut$ coming from the fact that the subgroups $W_k(D)$ and $V_{d,k}$ intersect non-trivially, are already satisfied in the group $G$, and to obtain a precise estimate of their cost.

The following is the main technical lemma towards Proposition \ref{u-replace}. 

\begin{lem} \label{lem-i}
Fix $i \in \left\{1, \ldots, k\right\}$, and let $C_i = 2d + \max(2,r_i)$ (recall that $r_i$ has been defined with the set of relators $R_2$). Then for every $n \geq 0$ and every $u \in D_{\infty}^{(i)}$ having at most $n$ carets, there exists a word $w \in \Sigma^*$ of length at most $C_i n$ so that the relation $u=w$ holds in $G$ and has cost at most $C_i n$.
\end{lem}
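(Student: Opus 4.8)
The plan is to argue by strong induction on the number of carets $N = \kappa(u)$, proving the stronger statement that one can take $w \in \Sigma^*$ with $|w| \le C_i N$ and $c(u,w) \le C_i N$; the assertion then follows at once from $\kappa(u) \le n$. The base case $N=0$ is immediate, since then $u$ is trivial and $w$ is the empty word. Throughout I regard $u$ as a single letter of $S$, because $u \in D_{\infty}^{(i)} \subset W_k(D)$.

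For the inductive step I would first exploit the wreath-product structure of $D_{\infty}^{(i)}$ to write $u = \tau \, \tilde v_1 \cdots \tilde v_d$ as a relation inside $W_k(D)$, where $\tau \in D_1^{(i)}$ is the permutation performed at the vertex $a_i$ and each $\tilde v_j \in D_{\infty}^{(i)}$ is supported on the subtree $\tree^{a_i b_j}$ and fixes its root. Counting carets gives $\kappa(u) = 1 + \sum_j p_j$ with $p_j := \kappa(\tilde v_j)$ measured inside $\tree^{a_i b_j}$, so $\sum_j p_j = N-1$. As this is a relation in $W_k(D)$, splitting the single letter $u$ into the $m+1$ letters $\tau, \tilde v_{j_1}, \ldots, \tilde v_{j_m}$, where $J = \{ j : p_j \ge 1 \}$, $m = |J| \le d$ and the trivial factors are discarded, costs at most $m$ relators of $R_D$. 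The factor $\tau \in D_1^{(i)}$ is then replaced by its chosen word $w_\tau \in \Sigma^*$ of length $\le r_i$ using one relator of $R_2$, at cost $1$.

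The heart of the argument is the treatment of each nontrivial factor $\tilde v_j$. Because $\tilde v_j$ fixes $a_i$ together with all its descendants, it lies in $W_k(D)_{\delta_{i,j}}$, so with $v_j' := \delta_{i,j}\, \tilde v_j\, \delta_{i,j}^{-1} \in W_k(D)$ the relation $\delta_{i,j}\, \tilde v_j\, \delta_{i,j}^{-1} = v_j'$ is precisely an $R_1$ relator; by bi-invariance of the cost this converts $\tilde v_j$ into $\delta_{i,j}^{-1} v_j' \delta_{i,j}$ at cost $1$. Since $\delta_{i,j}$ maps $\tree^{a_i b_j}$ onto $\tree^{a_i}$ order-preservingly, conjugation preserves type $W(D)$, whence $v_j' \in D_{\infty}^{(i)}$, and by Lemma \ref{increase} it has at most $p_j - 1$ carets. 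The inductive hypothesis supplies $w_j' \in \Sigma^*$ with $|w_j'| \le C_i(p_j-1)$ and $c(v_j', w_j') \le C_i(p_j-1)$, which again by bi-invariance turns $\delta_{i,j}^{-1} v_j' \delta_{i,j}$ into $\delta_{i,j}^{-1} w_j' \delta_{i,j}$ at the same cost.

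Assembling, I take $w = w_\tau \prod_{j \in J} \delta_{i,j}^{-1} w_j' \delta_{i,j} \in \Sigma^*$ and add up contributions using subadditivity of the cost along products. The length estimate reads $|w| \le r_i + \sum_{j \in J}(C_i(p_j-1)+2) = r_i + C_i(N-1-m) + 2m$, and the cost estimate reads $c(u,w) \le m + 1 + \sum_{j \in J}(1 + C_i(p_j-1)) = 2m+1 + C_i(N-1-m)$. The only delicate point, which I expect to be the main obstacle, is checking that the constant $C_i = 2d + \max(2,r_i)$ absorbs both, since this is exactly the bookkeeping that trades the one caret gained per conjugation against the two extra letters $\delta_{i,j}^{\pm 1}$: the length bound needs $r_i + 2m \le C_i(m+1)$, which holds as $r_i + 2m \le r_i + 2d \le C_i$, and the cost bound needs $2m+1 \le C_i(m+1)$, which holds as $C_i \ge 2$. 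Both then yield $\le C_i N \le C_i n$, completing the induction.
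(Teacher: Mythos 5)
Your proof is correct and follows essentially the same route as the paper's: the same decomposition of $u$ into a top permutation in $D_1^{(i)}$ (handled by $R_2$) and sections supported on the subtrees $\tree^{a_ib_j}$ (split off using $R_D$), conjugation by the $\delta_{i,j}$ via $R_1$ and Lemma \ref{increase} to drop a caret, and induction on the caret count. One harmless off-by-one: with $p_j$ the caret count of $\tilde v_j$ inside $\tree^{a_ib_j}$ (the convention that makes $\sum_j p_j = N-1$ correct), Lemma \ref{increase} gives that $v_j'$ has at most $p_j$ carets, not $p_j-1$; your concluding inequalities $r_i + 2m \le C_i$ and $2m+1 \le C_i$ still absorb this, so the bounds $|w| \le C_i N$ and $c(u,w) \le C_i N$ survive.
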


\begin{proof}
We use induction on $n$. The result is trivially true for $n=0$ because the only element of $D_{\infty}^{(i)}$ with zero carets is the identity, and is true for $n=1$ thanks to the set of relators $R_2$.

The idea of the proof of the induction step is the following. Given $u \in D_{\infty}^{(i)}$ with at most $n+1$ carets, we begin by multiplying it by an element of $D_{1}^{(i)}$ in order to ensure that it acts trivially on the first level of $\tree^{a_i}$. The resulting automorphism has a natural decomposition into a product of $d$ elements of $D_{\infty}^{(i)}$, coming from its action on the subtrees $\tree^{a_ib_1}, \ldots, \tree^{a_ib_d}$, with a nice control on the number of carets of each element of this product. We then apply the induction hypothesis to each of these elements, after having reduced their number of carets by conjugating by an element of $\Delta$, which has the effect of increasing by $1$ the level of the subtree on which they act.

Henceforth we assume that $u \in D_{\infty}^{(i)}$ is an element having at most $n+1$ carets, with $n \geq 1$. If we let $\bar{u}$ denote the element of $D_{1}^{(i)}$ acting like $u$ on the first level of $\tree^{a_i}$, it is clear that $u' = u \bar{u}^{-1}$ stabilizes pointwise the first level of $\tree^{a_i}$. Using relators from $R_2$, we pick a word $w_{\bar{u}} \in \Sigma^*$ so that the relation $\bar{u} = w_{\bar{u}}$ holds in $G$ and has cost at most one.

Now in the group $\Daaut$, since $u'$ acts trivially on the first level of $\tree^{a_i}$, it has a natural decomposition $u' = u_{1} \ldots u_d$, where each $u_{\ell} \in D_{\infty}^{(i)}$ acts trivially outside the subtree $\tree^{a_ib_{\ell}}$. In the rest of the proof, we implicitly assume that we only consider integers $\ell \in \left\{1, \ldots, d\right\}$ such that $u_{\ell}$ is non-trivial. Note that each $u_{\ell}$ has at most $n+1$ carets and that \[ \sum_{\ell} \kappa(u_{\ell}) \leq \kappa(u) + d -1 \leq n+d,\] because the caret corresponding to the root of $\tree^{a_i}$ can appear $d$ times in this sum, whereas it is counted only once in $\kappa(u)$. Note also that thanks to the set of relators $R_D$, the relation $u' = u_1 \ldots u_d$ also holds in the group $G$ and has cost at most $d$.

Remark that by construction of the set $\Delta$, every element of $W_k(D)$ acting trivially on the second level of $\tree$ lies inside $W_k(D)_{\delta}$ for every $\delta \in \Delta$ (see Subsection \ref{sec-prel} for the definition of $W_k(D)_{\delta}$). In particular if $\ell \in \left\{1, \ldots, d\right\}$ and if $\delta_{\ell} = \delta_{i,\ell}$, we have $u_{\ell} \in W_k(D)_{\delta_{\ell} }$ and thanks to $R_1$, the word $\delta_{\ell} u_{\ell} \delta_{\ell}^{-1}$ represents in the group $G$ an element $\tilde{u_{\ell}} \in D_{\infty}^{(i)}$ with at most $\kappa(u_{\ell})-1$ carets according to Lemma \ref{increase}. Note in particular that \begin{equation} \label{eq1} \sum_{\ell} \kappa(\tilde{u_{\ell}}) \leq \sum_{\ell} (\kappa(u_{\ell})-1) \leq \sum_{\ell} \kappa(u_{\ell}) - d \leq n. \end{equation}

Now since $\kappa(\tilde{u_{\ell}}) \leq \kappa(u_{\ell})-1 \leq n$ for every $\ell \in \left\{1, \ldots, d\right\}$, we may apply the induction hypotheses to $\tilde{u_{\ell}}$, and we obtain a word $\tilde{w_{\ell}}$ of length at most $C_i \kappa(\tilde{u_{\ell}})$ so that $\tilde{u_{\ell}} = \tilde{w_{\ell}}$ in $G$ and $c(\tilde{u_{\ell}},\tilde{w_{\ell}}) \leq C_i \kappa(\tilde{u_{\ell}})$. If we denote by $w_{\ell} = \delta_{\ell}^{-1} \tilde{w_{\ell}} \delta_{\ell}$, then the relation $u_{\ell} = w_{\ell}$ holds in the group $G$ and has cost \[ c(u_{\ell},w_{\ell}) \leq c(u_{\ell},\delta_{\ell}^{-1} \tilde{u_{\ell}} \delta_{\ell}) + c(\delta_{\ell}^{-1} \tilde{u_{\ell}} \delta_{\ell},w_{\ell}) \leq 1 + C_i \kappa(\tilde{u_{\ell}}). \]

We now want to put all these pieces together and conclude the proof of the induction step. Consider the word $w = w_1 \ldots w_d w_{\bar{u}} \in \Sigma^*$. Its length is easily seen to satisfy \[ |w|_{\Sigma} \leq \sum_{\ell = 1}^d |w_{\ell}|_{\Sigma} + |w_{\bar{u}}|_{\Sigma} \leq \sum_{\ell = 1}^d (2 + |\tilde{w_{\ell}}|_{\Sigma}) + r_i \leq C_i \sum_{\ell = 1}^d \kappa(\tilde{u_{\ell}}) + 2d + r_i \leq C_i(n+1), \] because $\sum_{\ell} \kappa(\tilde{u_{\ell}}) \leq n$ according to (\ref{eq1}), and $C_i \geq 2d + r_i$. Furthermore, we claim that the relation $u=w$ is satisfied in $G$ and has cost at most $C_i(n+1)$. Indeed, by triangular inequality we have \[ \begin{aligned} c(u,w) & \leq c(u,u'\bar{u}) + c(u'\bar{u},w_1 \ldots w_d w_{\bar{u}}) \\ & \leq c(u,u'\bar{u}) + c(u',w_1 \ldots w_d) + c(\bar{u},w_{\bar{u}}). \end{aligned}\] Now $c(u,u'\bar{u})$ and $c(\bar{u},w_{\bar{u}})$ are at most $1$, and by using triangular inequality again we obtain \[ \begin{aligned} c(u,w) & \leq 2 + c(u',u_1 \ldots u_d) + c(u_1 \ldots u_d,w_1 \ldots w_d) \\ & \leq 2+d + \sum c(u_{\ell},w_{\ell}) \\ & \leq 2+d+\sum(1 + C_i \kappa(\tilde{u_{\ell}})) \\ & \leq 2+2d+C_i \sum \kappa(\tilde{u_{\ell}}) \\ & \leq  2+2d+C_i n \\ & \leq C_i(n+1), \end{aligned} \] so the proof of the induction step is complete.
\end{proof}

\begin{prop} \label{u-replace}
There exists a constant $C > 0$ such that for every $u \in D_{\infty}^k$, there exists a word $w \in \Sigma^*$ of length at most $C \kappa(u)$ so that the relation $u=w$ holds in $G$ and has cost at most $k + \kappa(u)$.
\end{prop}

\begin{proof}
Let $C = \max_i C_i$, where the constant $C_i$ is defined in Lemma \ref{lem-i}. Any $u \in D_{\infty}^k$ can be written $u=u_1 \ldots u_k$ in $\Daaut$, with $u_i \in D_{\infty}^{(i)}$ and $\kappa(u) = \kappa(u_1) + \cdots + \kappa(u_k)$. Applying Lemma \ref{lem-i} to $u_i$, we get a word $w_i$ of length at most $C_i \kappa(u_i)$ so that the relation $u_i=w_i$ holds in $G$ and has cost at most $\kappa(u_i)$. Let $w=w_1 \ldots w_k$. Then \[ |w|_{\Sigma} \leq \sum_{i=1}^k |w_i|_{\Sigma} \leq \sum_{i=1}^k C_i \kappa(u_i) \leq C \sum_{i=1}^k \kappa(u_i) = C\kappa(u).\] Moreover the relation $u=u_1 \ldots u_k$ holds in $G$ thanks to the set of relators $R_{D}$. Consequently in $G$ we have $u=w$ at a total cost of at most \[ \begin{aligned} c(u,u_1 \ldots u_k) + c(u_1 \ldots u_k,w) & \leq k+ \sum_{i=1}^k c(u_i,w_i) \\ & \leq k + \sum_{i=1}^k \kappa(u_i) = k + \kappa(u). \end{aligned}\]
\end{proof}

The next lemma will reduce the estimate of the area function to its estimate for words of the special form $W_k(D) \Sigma^*$. 

\begin{lem} \label{u-left}
There exists a constant $c_1>0$ such that for any $n$ and any word $w \in S^*$ of length at most $n$, there exists a word $w' = u \sigma_1 \ldots \sigma_j$ of length at most $n$, where $u \in W_k(D)$, $\sigma_1, \ldots, \sigma_j \in \Sigma$, so that $w'$ is homotopic to $w$ and $c(w,w') \leq c_1 n \log(n)$.
\end{lem}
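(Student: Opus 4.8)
The goal is to take an arbitrary word $w = s_1 \cdots s_n \in S^*$, where each $s_i \in \Sigma \cup W_k(D)$, and rewrite it in the normal form $u \sigma_1 \cdots \sigma_j$ with a single $W_k(D)$-syllable in front, while controlling both the length (which must stay $\leq n$) and the cost (which should be $\preccurlyeq n \log n$). The natural strategy is to push every $W_k(D)$-letter to the left, past the $\Sigma$-letters that precede it, using the exchange relations $R_3$ supplied by Lemma \ref{exchange}. Each application of $R_3$ replaces a factor $\sigma u$ by a factor $u' \sigma'$ at cost one (it is a single relator), keeps the word length unchanged, and preserves the total number of $\Sigma$-letters and of $W_k(D)$-letters. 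Whenever two $W_k(D)$-letters become adjacent, the relators $R_D$ let us fuse them into one at cost one, again without increasing the length. So the combinatorial skeleton of the argument is a sorting procedure: interleaved letters get sorted into the pattern $W_k(D)$-block followed by $\Sigma$-block.

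The delicate point is the cost bookkeeping, which is why the bound is $n\log n$ rather than the naive $n^2$ that a bubble-sort analysis would give. The plan is to sort by a divide-and-conquer (merge-sort) scheme rather than by repeatedly bubbling single letters across the whole word. Concretely, I would split $w$ into two halves of length roughly $n/2$, recursively bring each half into the form $u_L \tau_L$ and $u_R \tau_R$ (with $\tau_L,\tau_R \in \Sigma^*$ and $u_L,u_R \in W_k(D)$), and then merge: the middle factor $\tau_L u_R$ must be turned into $u_R' \tau_L'$ by moving the single letter $u_R$ leftward past the at most $n/2$ letters of $\tau_L$. Each such leftward move is one application of $R_3$, so merging costs $O(n)$, and the final fusion of $u_L u_R'$ costs one more via $R_D$. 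With $c(n)$ denoting the worst-case cost over words of length $n$, this yields the recursion $c(n) \leq 2\,c(n/2) + O(n)$, whose solution is $c(n) \preccurlyeq n \log n$, giving the constant $c_1$ in the statement. Throughout, the word length never exceeds the original $n$ because both $R_3$ and $R_D$ are length non-increasing, so the length constraint is automatic.

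The main obstacle I expect is not the high-level recursion but making the merge step rigorous while respecting that $R_3$ as stated only rewrites a factor $\sigma u$ with $\sigma \in \Sigma$ and $u \in W_k(D)$ into $u'\sigma'$ with $u' \in W_k(D)$, $\sigma' \in \Sigma$. One must check that after moving $u_R$ left past one $\Sigma$-letter, the resulting $W_k(D)$-letter is again a single element of $W_k(D)$ (so that the next application of $R_3$ applies), and that the $\Sigma$-letters produced along the way stay in $\Sigma$; this is exactly the content of Lemma \ref{exchange} and is the reason $\Sigma$ was taken to be saturated. A secondary point is that the recursion needs a clean statement by induction on $n$ with an explicit multiplicative constant, so that the additive $O(n)$ merge cost and the length inequality can be tracked simultaneously; I would phrase the inductive hypothesis as providing both the homotopic normal-form word of length $\leq n$ and the cost bound $c(w,w') \leq c_1 n \log n$ in one package, and verify that the merge step reproduces it for the chosen $c_1$.
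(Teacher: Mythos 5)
Your proposal is correct and follows essentially the same route as the paper: the paper also uses a merge-sort (divide-and-conquer) scheme, splitting the word into two halves, recursively normalizing each, moving the second $W_k(D)$-syllable left with at most one $R_3$ relator per $\Sigma$-letter and fusing via $R_D$, which yields the recursion $f(2^{n+1}) \leq 2f(2^n) + 2^n$ and hence the $n\log n$ bound. Your attention to the role of saturation of $\Sigma$ (so that $R_3$ stays applicable after each exchange) is exactly the point the paper delegates to Lemma \ref{exchange}.
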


\begin{proof}
For any word $w \in S^*$, define \[ \tau(w) = \inf \left\{c(w,w') : w' \in W_k(D) \Sigma^* \, \text{and $w'$ is homotopic to $w$} \right\}, \] and \[ f(n) =  \sup \left\{ \tau(w) : w \in S^* \, \text{has length at most $n$} \right\}. \] Note that both $\tau$ and $f$ take finite values thanks to relators from $R_3$ and $R_D$. We want to prove that $f(n) \leq c_1 n \log(n)$ for some constant $c_1$.

We use an algorithmic strategy. Given a word $w$, we first divide it into two subwords, then apply the algorithm to each of them and finally merge the results. More precisely, let us consider a word $w$ of length $2^{n+1}$, and divide it into two subwords $w_1, w_2$ of length $2^n$. By definition of the function $f$, there exists words $w'_1,w'_2 \in W_k(D) \Sigma^*$ such that $c(w_1,w'_1), c(w_2,w'_2) \leq f(2^n)$. Now in the word $\bar{w} = w'_1w'_2 \in W_k(D) \Sigma^* W_k(D) \Sigma^*$ we can move the $W_k(D)$ part of $w'_2$ to the left by applying at most $2^n-1$ relators of $R_3$, and merge it with the $W_k(D)$ part of $w'_1$ with cost $1$ thanks to the set of relators $R_D$. We therefore get a word $w' \in W_k(D) \Sigma^*$ homotopic to $w$ and so that $c(w,w') \leq 2 f(2^n) + (2^n-1) + 1$, which implies that $\tau(w) \leq 2 f(2^n) + 2^n$. By definition of $f$, we obtain $f(2^{n+1}) \leq 2 f(2^n) + 2^n$, from which we easily get the inequality $f(2^n) \leq n 2^{n-1}$. The result then follows from this inequality together with the fact that $f$ is non-decreasing.
\end{proof}

\begin{prop} \label{dehn}
There exists a constant $c>0$ such that if $w \in F_S$ has length at most $n$ and represents the identity in $\Daaut$, then $w$ already represents the identity in $G$ and has area \[a(w) \leq c n \log(n) +  \delta(cn),\] where $\delta$ is the Dehn function of the presentation $\left\langle \Sigma, R_{\Sigma} \right\rangle$ of $V_{d,k}$.
\end{prop}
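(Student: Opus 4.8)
The plan is to assemble the two reduction lemmas already established. Given a null-homotopic word $w \in F_S$ of length at most $n$, I would first apply Lemma \ref{u-left} to replace $w$ by a homotopic word $w' = u\sigma_1 \cdots \sigma_j$ of length at most $n$, where $u \in W_k(D)$ and $\sigma_1, \ldots, \sigma_j \in \Sigma$, at a cost $c(w,w') \leq c_1 n \log(n)$. In particular $j \leq n$. This puts the word into the special form $W_k(D)\Sigma^*$ to which the remaining structural facts apply.

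The crucial observation is then the following. Since $w'$ is homotopic to the null-homotopic word $w$, the relation $u = (\sigma_1 \cdots \sigma_j)^{-1}$ holds in $\Daaut$. The right-hand side lies in $V_{d,k}$ while $u \in W_k(D)$, so by Lemma \ref{lem-inter} the element $u$ actually lies in $V_{d,k} \cap W_k(D) = D_{\infty}^k$. This is the bridge that places $u$ exactly in the range of Corollary \ref{u-replace}. Moreover, regarding $u$ as an element of $V_{d,k}$, we have $|u|_{\Sigma} \leq j \leq n$, so Proposition \ref{carets} gives the linear caret bound $\kappa(u) \leq C_{\Sigma} |u|_{\Sigma} \leq C_{\Sigma} n$.

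I would then invoke Corollary \ref{u-replace} to produce a word $w_u \in \Sigma^*$ of length at most $C \kappa(u) \leq C C_{\Sigma} n$ satisfying $u = w_u$ in $G$ at cost at most $k + \kappa(u) \leq k + C_{\Sigma} n$. Replacing $u$ by $w_u$ converts $w'$ into $w'' = w_u \sigma_1 \cdots \sigma_j \in \Sigma^*$ at cost $c(w',w'') \leq c(u,w_u) \leq k + C_{\Sigma} n$. By construction $w''$ is homotopic to $w$, hence null-homotopic; being a word in $\Sigma^*$ and $V_{d,k}$ embedding in $\Daaut$, it represents the identity of $V_{d,k}$. Since $|w''|_{\Sigma} \leq (C C_{\Sigma} + 1)n$ and $R_{\Sigma} \subseteq R$, the area of $w''$ in $G$ is at most the area of the relation $w''$ in the presentation $\left\langle \Sigma \mid R_{\Sigma} \right\rangle$, namely $\delta((C C_{\Sigma} + 1)n)$. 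Summing the three contributions by the triangle inequality for the cost function yields \[ a(w) \leq c_1 n \log(n) + \left(k + C_{\Sigma} n\right) + \delta\!\left((C C_{\Sigma} + 1) n\right), \] which has the required form $c n \log(n) + \delta(cn)$ for a suitable $c$. In particular $a(w) < +\infty$, so $w$ represents the identity in $G$, which is exactly the injectivity of $G \rightarrow \Daaut$.

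The main obstacle is not any individual estimate but ensuring that the word eventually fed into the Dehn function of $V_{d,k}$ has length linear in $n$. This is precisely where the caret machinery is indispensable: after the merging step the $W_k(D)$-part $u$ lands in $D_{\infty}^k$, and Corollary \ref{u-replace} only controls the $\Sigma$-length of its replacement through $\kappa(u)$; it is the lower bound of Proposition \ref{carets} that converts the word-length bound $|u|_{\Sigma} \leq n$ into $\kappa(u) = O(n)$, so that $\delta$ is evaluated at an argument linear in $n$. The $n\log(n)$ term, inherited from the merging in Lemma \ref{u-left}, is retained explicitly in the statement and causes no difficulty; it is ultimately dominated by $\delta$ when deducing Theorem \ref{presentation}, since $\Daaut$ is not Gromov-hyperbolic and so $\delta$ is at least quadratic.
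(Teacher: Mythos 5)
Your proof is correct and follows essentially the same route as the paper's: reduce to the form $W_k(D)\Sigma^*$ via Lemma \ref{u-left}, identify the $W_k(D)$-part as an element of $D_{\infty}^k$ via Lemma \ref{lem-inter}, convert its word-length bound into a caret bound via Proposition \ref{carets}, replace it by a $\Sigma$-word via Corollary \ref{u-replace}, and finish with the Dehn function of $\left\langle \Sigma \mid R_{\Sigma} \right\rangle$. The only differences (substituting for $u$ rather than $u^{-1}$, and arranging the final triangle inequality around the word $w_u\sigma_1\cdots\sigma_j$ rather than the pair $(w'',\sigma_1\cdots\sigma_j)$) are cosmetic.
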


\begin{proof}
We first apply Lemma \ref{u-left} to the word $w$, and obtain a word $w' = u \sigma_1 \ldots \sigma_j$ of length at most $n$ so that $c(w,w') \leq c_1 n \log(n)$. Since $w$ is trivial in $\Daaut$, so is $w'$, and therefore the element $u^{-1}$ belongs to $W_k(D) \cap V_{d,k} = D_{\infty}^k$. Moreover this element has length at most $n$ in the group $V_{d,k}$ because the word $w'$ has length at most $n$. According to Proposition \ref{carets}, we have $\kappa(u^{-1}) \leq C_{\Sigma} n$. Applying Proposition \ref{u-replace} to $u^{-1}$ yields a word $w'' \in \Sigma^*$ of length at most $C \kappa(u^{-1}) \leq C C_{\Sigma} n$ so that the relation $u^{-1}=w''$ holds in $G$ and has cost at most $k + C_{\Sigma} n$. Therefore we obtain that
\[ \begin{aligned} a(w) \leq & c(w,w') + c(u^{-1},w'') + c(w'',\sigma_1 \ldots \sigma_j) \\ \leq & c_1 n \log(n) + (k + C_{\Sigma} n) + \delta \left(C C_{\Sigma} n + n \right), \end{aligned}\]
because $w''$ and $\sigma_1 \ldots \sigma_j$ represent the same element in $V_{d,k}$ and $w''(\sigma_1 \ldots \sigma_j)^{-1}$ has length at most $C C_{\Sigma}n + n$, so $c(w'',\sigma_1 \ldots \sigma_j)$ is at most $\delta(C C_{\Sigma}n+n)$ by definition of the Dehn function. Therefore $a(w) \leq c n \log(n) +  \delta(cn)$ for some constant $c$ depending only on $\Sigma$, and the proof is complete.
\end{proof}

In particular we deduce from Proposition \ref{dehn} that the Dehn function of $\Daaut$ is $\preccurlyeq n \log n + \delta_{V_{d,k}}$. But now the group $V_{d,k}$ is not Gromov-hyperbolic since it has a $\mathbb{Z}^2$ subgroup, so its Dehn function is not linear and consequently at least quadratic \cite{Bow}. Therefore $n \log n \preccurlyeq \delta_{V_{d,k}}$, and the Dehn function of $\Daaut$ is thus asymptotically bounded by $\delta_{V_{d,k}}$.

\section{Compact presentability of \sch completions} \label{sec-sch}

If $\Gamma$ is a group with a commensurated subgroup $\Lambda$, the \sch completion process builds a t.d.l.c.\ group $\Gamma \rpc \Lambda$ and a morphism $\Gamma \rightarrow \Gamma \rpc \Lambda$, so that the image of $\Gamma$ is dense and the closure of the image of $\Lambda$ is compact open. It was formally introduced in \cite{tzanev}, following an idea appearing in \cite{schlichting}. 

We would like to point out that \sch completions are sometimes called \textit{relative profinite completions} \cite{SW, eld-wil}, but we choose not to use this terminology in order to avoid confusion with the notion of \textit{localised profinite completion} appearing in \cite{reid-pc}. Although we will not use this terminology, we also note that a group together with a commensurated subgroup is sometimes called a Hecke pair.

The main theorem of this section is a general result about compact presentability of \sch completions:

\begin{thm} \label{thm-sch}
Let $\Gamma$ be a finitely presented group and let $\Lambda$ be a finitely generated commensurated subgroup. Then the t.d.l.c.\ group $\Gamma \rpc \Lambda$ is compactly presented.
\end{thm}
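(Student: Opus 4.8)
The goal is to build a compact presentation of the \sch completion $\Gamma \rpc \Lambda$ starting from the finite presentation of $\Gamma$ and the finite generating set of $\Lambda$. Recall that in $\Gamma \rpc \Lambda$ the closure $\overline{\Lambda}$ of the image of $\Lambda$ is a compact open subgroup, and the image of $\Gamma$ is dense; moreover $\overline{\Lambda}$ is the profinite completion of $\Lambda$ relative to the family of finite-index subgroups obtained by intersecting $\Lambda$ with its $\Gamma$-conjugates. The strategy mirrors Section \ref{sec-pres}: take as compact generating set $S = S_\Gamma \cup \overline{\Lambda}$, where $S_\Gamma$ is a finite generating set of $\Gamma$, and write down a list of relators of bounded $S$-length, namely (i) the finite set of relators $R_\Gamma$ from the finite presentation of $\Gamma$; (ii) the multiplication table relators $R_{\overline\Lambda}$ of the compact group $\overline\Lambda$, i.e. all words $u_1 u_2 u_3^{-1}$ with $u_i \in \overline\Lambda$ and $u_1 u_2 = u_3$; and (iii) ``commensurating'' relators expressing, for each generator $s$ of $\Gamma$, the relation $s\, v\, s^{-1} = w$ whenever $v$ lies in the finite-index subgroup $\overline\Lambda \cap s\overline\Lambda s^{-1}$ and $w$ is its image in $\overline\Lambda$. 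All these relators have length bounded in terms of $S$, so the group $G = \langle S \mid R_\Gamma \cup R_{\overline\Lambda} \cup R_1 \rangle$ is compactly presented; the content is to prove the natural surjection $G \to \Gamma \rpc \Lambda$ is an isomorphism.

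First I would record the easy direction: $S$ does generate $\Gamma \rpc \Lambda$, since $\overline\Lambda$ is open and the image of $\Gamma$ is dense, so every element is a product of an element of $\overline\Lambda$ and an element of the image of $\Gamma$. Thus $G \to \Gamma \rpc \Lambda$ is a well-defined surjection, and I must show injectivity, i.e. that any word in $F_S$ trivial in $\Gamma \rpc \Lambda$ is already trivial in $G$. The key structural lemma, analogous to Lemma \ref{exchange}, is an \emph{exchange relation}: using the commensurating relators $R_1$, any product $s\,u$ with $s \in S_\Gamma$ and $u \in \overline\Lambda$ can be rewritten at bounded cost as $u'\,s'$ with $u' \in \overline\Lambda$ and $s'$ a word over $S_\Gamma$ of bounded length. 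This requires that conjugation by $s$ carries the finite-index subgroup $\overline\Lambda \cap s^{-1}\overline\Lambda s$ into $\overline\Lambda$, which is exactly the commensuration hypothesis; one decomposes $u$ along a finite transversal of this subgroup, absorbing the transversal representatives into the $\Gamma$-part. Iterating the exchange relation, any word over $S$ is homotopic in $G$ to a normal form $u\,\gamma$ with $u \in \overline\Lambda$ and $\gamma$ a word over $S_\Gamma$.

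Once this normal form is reached, injectivity follows: if $u\gamma$ is trivial in $\Gamma \rpc \Lambda$, then the image of $\gamma$ equals $u^{-1} \in \overline\Lambda$, so $\gamma$ represents an element of $\Gamma$ whose image lands in $\overline\Lambda$, hence in $\Lambda$ after possibly multiplying by a relation of $\Gamma$; the relators $R_\Gamma$ (finite presentation of $\Gamma$) then let me reduce $\gamma$ to a word over a generating set of $\Lambda$, and the multiplication relators $R_{\overline\Lambda}$ identify this with $u^{-1}$ inside $\overline\Lambda$, killing the word in $G$. This is where finite generation of $\Lambda$ is used: it guarantees that the compact group $\overline\Lambda$ is topologically finitely generated, so that the passage between the ``$\Lambda$ inside $\Gamma$'' description and the ``$\overline\Lambda$ inside $\Gamma \rpc \Lambda$'' description is governed by finitely many relations of bounded length rather than requiring infinitely many independent generators.

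\textbf{The main obstacle.} The delicate point is the exchange/normal-form step combined with controlling that the reduction of the $\Gamma$-part actually terminates inside $\overline\Lambda$. Concretely, the difficulty is to show that the subgroup $\langle\!\langle R \rangle\!\rangle$ of $F_S$ exactly captures the kernel of $F_S \to \Gamma \rpc \Lambda$, and in particular that an element of $\Gamma$ whose image in $\Gamma \rpc \Lambda$ lies in $\overline\Lambda$ can be rewritten, using only $R_\Gamma$ and finitely many bounded relators, as an element of $\Lambda$ itself — this is precisely the statement that $\Lambda = \Gamma \cap \overline\Lambda$ inside $\Gamma \rpc \Lambda$, which must be verified from the definition of the \sch completion. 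Managing the topology here (that $\overline\Lambda$ is profinite and that finitely many congruence conditions suffice) is the heart of the argument; everything else parallels the explicit computations already carried out for $\Daaut$ in Section \ref{sec-pres}.
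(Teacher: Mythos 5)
Your overall architecture --- reduction to a normal form $u\cdot\gamma$ with $u\in\overline{\Lambda}$ and $\gamma$ a word over $S_\Gamma$ via exchange relations, then the identity $\Gamma\cap\overline{\Lambda}=\Lambda$ to kill the normal form --- is the same as the paper's (Propositions \ref{prop1-rpc} and \ref{prop2-rpc}). But your list of relators has a genuine gap: it contains no relator identifying an element of $\Lambda$ \emph{written as a word over the letters $S_\Gamma$} with that same element \emph{viewed as a letter of the alphabet $\overline{\Lambda}$}. Indeed, your three families are words over $S_\Gamma$ alone ($R_\Gamma$), words over $\overline{\Lambda}$-letters alone ($R_{\overline{\Lambda}}$), and the conjugation relators $s v s^{-1} w^{-1}$ with $s\in S_\Gamma$ and $v,w\in\overline{\Lambda}$. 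Consequently, the assignment sending each $S_\Gamma$-letter to the corresponding generator of $\Gamma$ and every $\overline{\Lambda}$-letter to $1$ kills all three families, so the group $G$ you present surjects onto $\Gamma=\langle S_\Gamma\mid R_\Gamma\rangle$. Now take any $\lambda\neq 1$ in $\Lambda$, let $w_\lambda\in S_\Gamma^*$ be a word representing it and let $\bar\lambda\in\overline{\Lambda}$ be its image in $\Gamma\rpc\Lambda$. The word $w_\lambda\,\bar\lambda^{-1}$ is null-homotopic in $\Gamma\rpc\Lambda$, yet it maps to $\lambda\neq 1$ under the surjection $G\to\Gamma$; hence it is nontrivial in $G$, and the natural map $G\to\Gamma\rpc\Lambda$ fails to be injective as soon as $\Lambda\neq 1$. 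The gap surfaces exactly where your sketch says ``absorbing the transversal representatives into the $\Gamma$-part'' and ``the multiplication relators $R_{\overline{\Lambda}}$ identify this with $u^{-1}$ inside $\overline{\Lambda}$'': the relators $R_{\overline{\Lambda}}$ only involve $\overline{\Lambda}$-letters and can never convert a word over $S_\Gamma$ into one of them.

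The missing relators are exactly the family the paper calls $R_2$: choose $S_\Gamma=\{s_1,\ldots,s_m\}$ so that $s_1,\ldots,s_n$ generate $\Lambda$ (this is where finite generation of $\Lambda$ is really used --- not, as you suggest, to make $\overline{\Lambda}$ topologically finitely generated), and add the $n$ bounded relators $s_i\,\bar s_i^{-1}$, where $\bar s_i\in\overline{\Lambda}$ is the image of $s_i$. With these ``bridge'' relators your argument does go through and becomes essentially the paper's: in the exchange step one chooses the transversal of $\overline{\Lambda}\cap s^{-1}\overline{\Lambda}s$ in $\overline{\Lambda}$ inside the dense image of $\Lambda$, so that after crossing the bridge its elements become bounded words over $s_1,\ldots,s_n$; and in the final step one rewrites $\gamma\in\Gamma\cap\overline{\Lambda}=\Lambda$ as a word over $s_1,\ldots,s_n$ using $R_\Gamma$, pushes it into $\overline{\Lambda}$ letter by letter using $R_2$, and only then collapses it with $R_{\overline{\Lambda}}$. (The paper packages the exchange step differently, taking as relators $R_4$ the identities $s_i u=u'w$ furnished by the decomposition $\Gamma\rpc\Lambda=\overline{\Lambda}\cdot\Gamma$ of Lemma \ref{decomp-rpc}, but this is a cosmetic difference.)
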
 

Before going into the proof, let us mention the following result which can derived from Theorem \ref{thm-sch}. As mentioned above, the notion of \sch completion is different but closely related to the notion of profinite completion of a group localised at a subgroup \cite{reid-pc}. More precisely, it is proved in \cite[Corollary 3, (vii)]{reid-pc} that the \sch completion $\Gamma \rpc \Lambda$ is the quotient of the profinite completion of $\Gamma$ localised at $\Lambda$ by a compact normal subgroup. Now since a locally compact group is compactly presented if and only if one of its quotients by a compact normal subgroup is, we obtain:

\begin{cor}
If $\Gamma$ is a finitely presented group with a finitely generated commensurated subgroup $\Lambda$, then the profinite completion of $\Gamma$ localised at $\Lambda$ is compactly presented.
\end{cor}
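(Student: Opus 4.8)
The plan is to imitate, at a higher level of generality, the explicit presentation produced in Section~\ref{sec-pres}, the role of $V_{d,k}$ being played by $\Gamma$ and the role of the compact open subgroup $W_k(D)$ by $L := \overline{\Lambda}$, the closure of the image of $\Lambda$ in $G := \Gamma \rpc \Lambda$. By construction of the \sch completion, $L$ is compact open, the image of $\Gamma$ is dense, and $\Lambda$ is commensurated by $\Gamma$; denoting also by $\Gamma$ and $\Lambda$ their images, one gets $\Gamma \cap L = \Lambda$ and, since $L$ is open and $\Gamma$ dense, $G = \Gamma L = L \Gamma$. I would fix a finite symmetric generating set $\Sigma_\Gamma$ of $\Gamma$ containing a finite generating set $\Sigma_\Lambda$ of $\Lambda$ (this is where finite generation of $\Lambda$ enters), and take as compact generating set of $G$ the set $S = \Sigma_\Gamma \sqcup L$, which is compact since $\Sigma_\Gamma$ is finite and $L$ is compact.

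I would then write down a presentation $\langle S \mid R\rangle$ with $R$ of bounded length, consisting of: $(R_\Gamma)$ a \emph{finite} presentation of $\Gamma$ in the letters $\Sigma_\Gamma$ (here finite presentability of $\Gamma$ is used to keep these relators of bounded length); $(R_L)$ the multiplication table of $L$, i.e.\ all words $u_1 u_2 u_3^{-1}$ with $u_1 u_2 = u_3$ in $L$, of length $3$; $(R_{\mathrm{id}})$ for each $\lambda \in \Sigma_\Lambda$ the length-$2$ relator identifying the letter $\lambda \in \Sigma_\Gamma$ with the letter $\lambda \in L$; and $(R_{\mathrm{ex}})$ a family of exchange relators $\sigma u (u'g)^{-1}$, one for each $\sigma \in \Sigma_\Gamma$ and $u \in L$, where $\sigma u = u' g$ in $G$ with $u' \in L$ and $g$ a word in $\Sigma_\Gamma$. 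The point which makes $(R_{\mathrm{ex}})$ admissible is that, because $\Lambda$ (hence $L$) is commensurated, the left coset $\sigma L$ is covered by only finitely many right cosets of $L$; using density of $\Gamma$ one may therefore choose the $\Gamma$-parts $g$ from a fixed finite set of words, so that the relators $(R_{\mathrm{ex}})$ have bounded length even though there are infinitely many of them.

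The verification that the natural surjection $\langle S \mid R\rangle \twoheadrightarrow G$ is injective then proceeds in two movements, exactly as in the proof of Theorem~\ref{presentation}. First, using $(R_{\mathrm{ex}})$ to move every $L$-letter to the left past the $\Sigma_\Gamma$-letters and $(R_L)$ to merge consecutive $L$-letters, I would show that every word in $S$ equals in $\langle S\mid R\rangle$ a normal word $u\, \sigma_1\cdots\sigma_m$ with $u \in L$ and $\sigma_i \in \Sigma_\Gamma$; this rewriting terminates because each exchange strictly decreases the number of $L$-letters lying to the right of the leftmost block. Second, if such a normal word represents the identity of $G$, then $\sigma_1\cdots\sigma_m$ represents $u^{-1} \in \Gamma \cap L = \Lambda$; writing this element as a product of letters of $\Sigma_\Lambda$ and invoking $(R_\Gamma)$ I would replace $\sigma_1\cdots\sigma_m$ by the corresponding product of $\Sigma_\Lambda$-letters, pass to $L$ through $(R_{\mathrm{id}})$, and collapse everything to a single $L$-letter via $(R_L)$, obtaining $u\,u^{-1}$, which is trivial. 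Since all relators in $R$ have bounded length, this yields compact presentability.

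I expect the exchange step to be the main obstacle. In the concrete situation of Section~\ref{sec-pres} the generating set was taken \emph{saturated} (Lemma~\ref{sat}), which forced the $\Gamma$-part of an exchange to remain a single generator (Lemma~\ref{exchange}); no such device is available in the abstract setting, and one must instead extract the finiteness of the set of possible $\Gamma$-parts directly from the commensuration hypothesis, and then check that the left-pushing rewriting terminates after finitely many relator applications. By contrast, because Theorem~\ref{thm-sch} asserts only compact presentability and not an isoperimetric bound, no cost estimates analogous to Lemmas~\ref{lem-i}, \ref{u-left} and Proposition~\ref{dehn} are needed, which considerably lightens the bookkeeping compared with Section~\ref{sec-pres}.
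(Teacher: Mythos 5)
Your construction is, in substance, the paper's own proof of Theorem \ref{thm-sch}, not of the stated corollary: your relator families $(R_\Gamma)$, $(R_{\mathrm{id}})$, $(R_L)$, $(R_{\mathrm{ex}})$ are precisely the paper's $R_1, R_2, R_3, R_4$, your left-pushing normal form is Proposition \ref{prop2-rpc}, and your second movement (a normal word $u\,\sigma_1\cdots\sigma_m$ representing $1$ forces $\sigma_1\cdots\sigma_m$ to represent an element of $\Gamma \cap \overline{\Lambda} = \Lambda$, which is then rewritten over the generators of $\Lambda$ and absorbed into $\overline{\Lambda}$) is Proposition \ref{prop1-rpc}, resting on Lemmas \ref{decomp-rpc} and \ref{inter-rpc}. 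Your remark that commensuration forces $\sigma L$ to meet only finitely many right cosets of $L$, so that the $\Gamma$-parts of the exchange relators can be drawn from a fixed finite set of words, is correct and is exactly the point the paper leaves implicit when it asserts that $R_4$ has bounded length; this part of your sketch is sound.

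The genuine gap is that you have proved compact presentability of the wrong group. The corollary is about the \emph{profinite completion of $\Gamma$ localised at $\Lambda$} in the sense of \cite{reid-pc}, which the paper explicitly distinguishes from the \sch completion $\Gamma \rpc \Lambda$ (this is why it avoids the terminology ``relative profinite completion''). These are in general different locally compact groups: by \cite[Corollary 3, (vii)]{reid-pc}, the \sch completion $\Gamma \rpc \Lambda$ is the quotient of the localised profinite completion by a compact normal subgroup. Your argument therefore stops one step short of the statement. To conclude, one must cite Reid's result and then invoke the fact that, for a locally compact group $G$ with compact normal subgroup $K$, the group $G$ is compactly presented if and only if $G/K$ is --- here in the upward direction, deducing compact presentability of the localised completion from that of its quotient $\Gamma \rpc \Lambda$. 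That two-line deduction is the entire content of the paper's proof of the corollary; what you wrote is a re-proof of Theorem \ref{thm-sch}, which the corollary takes as given.
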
 

\subsection{From commensurated subgroups to t.d.l.c.\ groups}

We start by recalling the definition of the process of \sch completion.

Let $\Gamma$ be a group and let $\Lambda$ be a subgroup of $\Gamma$. The left action of $\Gamma$ on the coset space $\Gamma / \Lambda$ yields a homomorphism $\Gamma \rightarrow \mathrm{Sym}(\Gamma / \Lambda)$, whose kernel is the normal core of $\Lambda$, i.e.\ the largest normal subgroup of $\Gamma$ contained in $\Lambda$ (or equivalently, the intersection of all conjugates of $\Lambda$). The \sch completion of $\Gamma$ with respect to $\Lambda$, denoted $\Gamma \rpc \Lambda$, is by definition the closure of the image of $\Gamma$ in $\mathrm{Sym}(\Gamma / \Lambda)$, the latter group being equipped with the topology of pointwise convergence.

Recall that $\Lambda$ is said to be commensurated by a subset $K$ of $\Gamma$ if for every $k \in K$, the subgroup $k \Lambda k^{-1} \cap \Lambda$ has finite index in both $\Lambda$ and $k \Lambda k^{-1}$. We say that $\Lambda$ is a commensurated subgroup if it is commensurated by the entire group $\Gamma$. It can be checked that if this holds, then the closure of the image of $\Lambda$ in $\Gamma \rpc \Lambda$ is a compact open subgroup. In particular $\Gamma \rpc \Lambda$ is a t.d.l.c.\ group. Note that by construction the image of $\Gamma$ in $\Gamma \rpc \Lambda$ is a dense subgroup. 

From now $\Lambda$ will be a commensurated subgroup of a group $\Gamma$. We point out that although the map $\Gamma \rightarrow \Gamma \rpc \Lambda$ is generally not injective, for the sake of simplicity we still use the notation $\Lambda$ and $\Gamma$ for their images in the group $\Gamma \rpc \Lambda$.

The next two lemmas are straightforward, we provide proofs for completeness. 

\begin{lem} \label{decomp-rpc}
We have $\Gamma \rpc \Lambda = \overline{\Lambda} \cdot \Gamma$.
\end{lem}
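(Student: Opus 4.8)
The goal is to prove that $\Gamma \rpc \Lambda = \overline{\Lambda} \cdot \Gamma$, where $\overline{\Lambda}$ denotes the closure of the image of $\Lambda$ in $\Gamma \rpc \Lambda$. The inclusion $\overline{\Lambda} \cdot \Gamma \subseteq \Gamma \rpc \Lambda$ is immediate, since both $\overline{\Lambda}$ and $\Gamma$ (more precisely, the image of $\Gamma$) are contained in the group $\Gamma \rpc \Lambda$, which is closed under multiplication. So the content of the lemma is the reverse inclusion $\Gamma \rpc \Lambda \subseteq \overline{\Lambda} \cdot \Gamma$, i.e. every element of the completion can be written as a product of an element of $\overline{\Lambda}$ and an element of the dense image of $\Gamma$.

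\textbf{Main steps.} First I would recall the two structural facts already established in the excerpt: the image of $\Gamma$ is dense in $\Gamma \rpc \Lambda$, and $\overline{\Lambda}$ is a compact \emph{open} subgroup. Openness is the key ingredient. Let $g \in \Gamma \rpc \Lambda$ be arbitrary. Since $\overline{\Lambda}$ is open, the coset $g \overline{\Lambda}$ is an open neighbourhood of $g$. By density of the image of $\Gamma$, there exists $\gamma \in \Gamma$ (identified with its image) lying in $g \overline{\Lambda}$. This means $\gamma = g \lambda$ for some $\lambda \in \overline{\Lambda}$, hence $g = \gamma \lambda^{-1} \in \Gamma \cdot \overline{\Lambda}$. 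To conclude I would observe that $\Gamma \cdot \overline{\Lambda} = \overline{\Lambda} \cdot \Gamma$: indeed $\overline{\Lambda} \cdot \Gamma$ is the union of double cosets, and by taking inverses one sees $(\overline{\Lambda}\cdot\Gamma)^{-1} = \Gamma^{-1} \cdot \overline{\Lambda}^{-1} = \Gamma \cdot \overline{\Lambda}$; since the statement is symmetric under inversion one gets both products equal, or alternatively one simply repeats the density argument using the open neighbourhood $\overline{\Lambda} g$ on the left to write $g$ directly in the form $\lambda' \gamma'$.

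\textbf{The main obstacle.} There is essentially no deep obstacle here — the lemma is genuinely straightforward, as the paper itself signals by grouping it with another routine lemma. The only point requiring a moment of care is making sure one invokes openness of $\overline{\Lambda}$ (not merely compactness) to guarantee that $g\overline{\Lambda}$ is a neighbourhood into which the dense subgroup $\Gamma$ must reach. I would therefore state explicitly that $\overline{\Lambda}$ is open as a compact open subgroup of the t.d.l.c.\ group $\Gamma \rpc \Lambda$, and that density of $\Gamma$ forces every open subset — in particular every coset of $\overline{\Lambda}$ — to meet the image of $\Gamma$. Everything else is formal manipulation of cosets, and the proof fits comfortably in a few lines.
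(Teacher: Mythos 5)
Your proof is correct and follows essentially the same argument as the paper: openness of $\overline{\Lambda}$ plus density of the image of $\Gamma$ forces a coset of $\overline{\Lambda}$ through $g$ to meet $\Gamma$. The only cosmetic difference is that you start from $g\overline{\Lambda}$ and then swap the factors by inversion, whereas the paper works directly with $\overline{\Lambda}g$ — the shortcut you yourself point out as the alternative.
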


\begin{proof}
Since $\overline{\Lambda}$ is an open subgroup of $\Gamma \rpc \Lambda$, $\overline{\Lambda} g$ is an open neighbourhood of $g$ for any $g \in \Gamma \rpc \Lambda$. Therefore the dense subgroup $\Gamma$ intersects $\overline{\Lambda} g$, meaning that there exist $\gamma \in \Gamma$ and $\lambda \in \overline{\Lambda}$ so that $\lambda g = \gamma$, i.e.\ $g = \lambda^{-1} \gamma$.
\end{proof}

\begin{lem} \label{inter-rpc}
The subgroups $\overline{\Lambda}$ and $\Gamma$ intersect along $\Lambda$.
\end{lem}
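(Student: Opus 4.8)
The inclusion $\Lambda \subseteq \overline{\Lambda} \cap \Gamma$ is immediate, so the plan is to establish the reverse inclusion $\overline{\Lambda} \cap \Gamma \subseteq \Lambda$. The strategy is to exploit the defining action of $\Gamma \rpc \Lambda$ on the coset space $\Gamma / \Lambda$, and in particular the base point $x_0 = e\Lambda$, whose stabiliser in $\Gamma$ is, by the very definition of the left translation action, exactly $\Lambda$: indeed the relation $\gamma \cdot x_0 = x_0$ reads $\gamma \Lambda = \Lambda$, that is $\gamma \in \Lambda$.

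First I would recall that in the topology of pointwise convergence on $\mathrm{Sym}(\Gamma / \Lambda)$, the stabiliser $U$ of the single point $x_0$ is a closed subgroup, being the preimage of the (clopen) singleton $\left\{ x_0 \right\}$ under the continuous evaluation-at-$x_0$ map. Since every element of $\Lambda$ fixes $x_0$, the image of $\Lambda$ in $\Gamma \rpc \Lambda$ is contained in $U$; as $U$ is closed, this forces $\overline{\Lambda} \subseteq U$.

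With this in hand the conclusion is quick: any element of $\overline{\Lambda} \cap \Gamma$ is the image of some $\gamma \in \Gamma$ which, lying in $\overline{\Lambda} \subseteq U$, fixes $x_0$; by the computation above $\gamma \in \Lambda$, so the element already lies in the image of $\Lambda$. The only point requiring a little care is the standing identification of $\Gamma$ and $\Lambda$ with their images under the possibly non-injective map $\Gamma \to \Gamma \rpc \Lambda$; since the normal core of $\Lambda$ (the kernel of this map) is contained in $\Lambda$, this identification does not affect the statement, and no genuine obstacle arises. The whole argument is a direct unwinding of the definition of the \sch completion, its only real content being the closedness of point stabilisers in the permutation topology.
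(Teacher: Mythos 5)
Your proof is correct and follows essentially the same route as the paper: both arguments rest on the observation that the stabiliser of the base coset $\Lambda \in \Gamma/\Lambda$ is closed in the topology of pointwise convergence, hence contains $\overline{\Lambda}$, while its intersection with $\Gamma$ is exactly $\Lambda$. Your additional remark about the non-injectivity of $\Gamma \to \Gamma \rpc \Lambda$ being harmless (since the normal core lies in $\Lambda$) is a useful clarification the paper leaves implicit, but it does not change the argument.
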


\begin{proof}
The subgroup $\Lambda$ stabilizes the coset $\Lambda$ in $\mathrm{Sym}(\Gamma / \Lambda)$, so every element of $\overline{\Lambda}$ must stabilizes this coset as well by definition of the topology. Therefore $\overline{\Lambda} \cap \Gamma \subset \Lambda$. The reverse inclusion is clear.
\end{proof}

The following result is a useful tool to identify some t.d.l.c.\ group $G$ with the \sch completion of one of its dense subgroups. It is due to Shalom and Willis \cite[Lemmas 3.5-3.6]{SW}. 

\begin{prop} \label{lem-sw}
Let $G$ be a topological group with a compact open subgroup $U$. If $\Gamma$ is a dense subgroup of $G$, then $\Gamma \cap U$ is commensurated in $\Gamma$ and the embedding of $\Gamma$ in $G$ induces an isomorphism of topological groups $\varphi: \Gamma \rpc (\Gamma \cap U) \rightarrow G / K_U$, where $K_U$ is the normal core of $U$. In particular if $U$ contains no non-trivial normal subgroup of $G$, then $\varphi$ is an isomorphism between $\Gamma \rpc (\Gamma \cap U)$ and $G$.
\end{prop}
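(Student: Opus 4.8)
The plan is to realize both $\Gamma \rpc (\Gamma \cap U)$ and $G / K_U$ as explicit subgroups of $\mathrm{Sym}(G/U)$ and check that they coincide, both as sets and as topological groups. Throughout write $\Lambda = \Gamma \cap U$, let $\iota \colon G \to \mathrm{Sym}(G/U)$ be the action homomorphism on the (discrete) coset space $G/U$, and let $\bar{\iota} \colon G/K_U \to \mathrm{Sym}(G/U)$ be the induced injection, which exists since $\ker \iota = K_U$ by definition of the normal core.

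First I would record that $\Lambda$ is commensurated. For $\gamma \in \Gamma$ the subgroup $V = U \cap \gamma U \gamma^{-1}$ is open in the compact group $U$, hence of finite index in $U$ and, symmetrically, in $\gamma U \gamma^{-1}$. Since $\gamma \in \Gamma$ we have $\gamma \Lambda \gamma^{-1} \cap \Lambda = \Gamma \cap \gamma U \gamma^{-1} \cap U = \Gamma \cap V$, and the coset space $(\Gamma \cap U)/(\Gamma \cap V)$ injects into $U/V$; thus $\Gamma \cap V$ has finite index in $\Lambda$, and likewise in $\gamma \Lambda \gamma^{-1}$. Next comes the core identification: because $\Lambda = \Gamma \cap U$ and $\Gamma$ meets every coset $gU$ (these are open and $\Gamma$ is dense), the assignment $\gamma \Lambda \mapsto \gamma U$ is a well-defined $\Gamma$-equivariant bijection $\Gamma / \Lambda \to G/U$. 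It induces a topological isomorphism $\mathrm{Sym}(\Gamma/\Lambda) \cong \mathrm{Sym}(G/U)$ (both with the topology of pointwise convergence) carrying the image of $\Gamma$ onto $\iota(\Gamma)$; hence $\Gamma \rpc \Lambda$ is exactly the closure $H$ of $\iota(\Gamma)$ inside $\mathrm{Sym}(G/U)$. I would also note that $\iota$ is continuous, since the stabilizer in $G$ of each point $hU$ is the open subgroup $hUh^{-1}$.

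The two remaining points are that $\bar{\iota}$ is a topological isomorphism onto its image and that this image equals $H$. For the first, I would use that $P = U/K_U$ is a compact open subgroup of $G/K_U$: as $\bar{\iota}$ is continuous and injective and $P$ is compact, $\bar{\iota}|_P$ is a homeomorphism onto $\bar{\iota}(P) = \iota(U)$, which is relatively open in $\bar{\iota}(G/K_U)$ because it is the intersection of $\bar{\iota}(G/K_U)$ with the open stabilizer of the point $U \in G/U$. A continuous bijective homomorphism that restricts to a homeomorphism on a neighbourhood of the identity is open, so $\bar{\iota}$ is a topological isomorphism onto $\bar{\iota}(G/K_U)$. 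For the second, $\iota(U)$ is compact hence closed, while $\Lambda$ is dense in $U$ (density of $\Gamma$ in the open set $U$), so $\iota(\Lambda)$ is dense in $\iota(U)$ and therefore $\overline{\Lambda} = \iota(U)$ inside $H$. Combining the decomposition $H = \overline{\Lambda} \cdot \Gamma$ of Lemma \ref{decomp-rpc} with the factorization $G = U\Gamma$ (again from density of $\Gamma$ and openness of $U$) gives $\iota(G) = \iota(U)\,\iota(\Gamma) = \overline{\Lambda} \cdot \Gamma = H$. Thus $\varphi = \bar{\iota}^{-1} \colon H = \Gamma \rpc \Lambda \to G/K_U$ is the desired topological isomorphism induced by the inclusion $\Gamma \hookrightarrow G$; when $U$ contains no non-trivial normal subgroup of $G$ we have $K_U = 1$ and $\varphi$ identifies $\Gamma \rpc \Lambda$ with $G$ itself.

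I expect the genuinely delicate step to be the openness of $\bar{\iota}$, that is, upgrading the set-theoretic identification of $\iota(G)$ with $H$ to a homeomorphism. This is precisely where local compactness enters, through the compactness-to-Hausdorff argument applied to the compact open subgroup $P = U/K_U$; the commensuration of $\Lambda$ and the equivariant bijection $\Gamma/\Lambda \cong G/U$ are, by contrast, soft consequences of the density of $\Gamma$ and the openness of $U$.
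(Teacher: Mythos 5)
Your proposal cannot be compared against an in-paper argument for the simple reason that the paper does not prove this proposition: it is quoted as a known result of Shalom and Willis (cited as Lemmas 3.5--3.6 of their paper), and the author only uses it as a black box. So what you have written is a self-contained proof of the cited result, and it is correct. The commensuration step via $V = U \cap \gamma U \gamma^{-1}$ is right; the $\Gamma$-equivariant bijection $\Gamma/\Lambda \to G/U$ (well-defined and injective because $\Lambda = \Gamma \cap U$, surjective because $\Gamma$ is dense and the cosets $gU$ are open) correctly identifies $\Gamma \rpc \Lambda$ with the closure $H$ of $\iota(\Gamma)$ in $\mathrm{Sym}(G/U)$; the kernel computation $\ker \iota = \bigcap_{a} aUa^{-1} = K_U$ is exactly the normal core; and the two concluding steps are sound: openness of $\bar{\iota}$ follows from the homeomorphism on the compact open subgroup $P = U/K_U$ together with the translation argument, and $\iota(G) = H$ follows from $\overline{\iota(\Lambda)} = \iota(U)$ (density of $\Lambda$ in $U$ plus compactness), the factorization $G = U\Gamma$, and Lemma \ref{decomp-rpc} -- which you may legitimately invoke, since it is proved in the paper before the proposition and independently of it. Your closing self-assessment is also accurate: the only genuinely delicate point is upgrading the abstract isomorphism to a topological one, and your compactness argument is where that happens. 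Two facts you use silently, both immediate but worth a word: $\mathrm{Sym}(G/U)$ with the pointwise-convergence topology is Hausdorff (it embeds in the product of copies of the discrete set $G/U$), which is what licenses ``compact hence closed'' and the homeomorphism claim for $\bar{\iota}|_P$; and $K_U = \ker \iota$ is closed, so $G/K_U$ is a Hausdorff topological group and $P$ really is a compact open subgroup of it.
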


\begin{ex}
Elder and Willis \cite{eld-wil} considered the \sch completion $G_{m,n}$ of the Baumslag-Solitar group $\mathrm{BS}(m,n) = \left\langle t,x \mid tx^{m}t^{-1} = x^n\right\rangle$ with respect to the commensurated subgroup $\left\langle x\right\rangle$. Theorem \ref{thm-sch} can be applied and yields that $G_{m,n}$ is compactly presented. However in this case this can be seen more directly because $G_{m,n}$ coincides with the closure of $\mathrm{BS}(m,n)$ in the automorphism group of its Bass-Serre tree, and therefore $G_{m,n}$ acts on a locally finite tree with compact vertex stabilizers. It follows that $G_{m,n}$ is Gromov-hyperbolic, and consequently automatically compactly presented.
\end{ex}

The next example shows that the almost automorphism group $\Daaut$ is a \sch completion of the Higman-Thompson group $V_{d,k}$. In particular Neretin's group is the \sch completion of $V_{d,2}$ with respect to an infinite locally finite subgroup, a point of view which does not seem to appear in the literature. This will be generalized in Theorem \ref{aaut-sch}.

\begin{ex}
We use the notation introduced at the beginning of Section \ref{sec-prel}. Let us consider the t.d.l.c.\ group $\Daaut$ and its compact open subgroup $W_k(D)$, which is easily seen not to contain any non-trivial normal subgroup of $\Daaut$. The Higman-Thompson group $V_{d,k}$ is a dense subgroup intersecting $W_k(D)$ along $D_{\infty}^k$ by Lemma \ref{lem-inter}. So it follows from Proposition \ref{lem-sw} that the group $\Daaut$ is isomorphic to the \sch completion $V_{d,k} \rpc D_{\infty}^k$.

However, note that compact presentability of $\Daaut$ cannot be obtained by applying Theorem \ref{thm-sch} because $D_{\infty}^k$ is not finitely generated.
\end{ex}

\subsection{Presentation of $\Gamma \rpc \Lambda$}

We will now prove the main result of this section, namely Theorem \ref{thm-sch}, which will follow from Proposition \ref{prop2-rpc}.

From now $\Gamma$ is a finitely presented group and $\Lambda$ a finitely generated commensurated subgroup. Recall that by abuse of notation, we still denote by $\Gamma$ and $\Lambda$ their images in the group $\Gamma \rpc \Lambda$. We let $S = \left\{s_1, \ldots, s_n, \ldots, s_m\right\}$ be a finite generating set of $\Gamma$ such that the elements $s_1, \ldots, s_n$ generate $\Lambda$. It follows from Lemma \ref{decomp-rpc} that $S \cup \overline{\Lambda}$ is a compact generating set of $\Gamma \rpc \Lambda$. To prove that the group $\Gamma \rpc \Lambda$ is compactly presented, we will consider a set $R = R_1 \cup R_2 \cup R_3 \cup R_4$ of relations of bounded length in $\Gamma \rpc \Lambda$, and prove that it is a set of defining relations, i.e.\ that relations of $R$ generate all the relations in $\Gamma \rpc \Lambda$.

We let $R_1$ be a set of words so that $\left\langle S \mid R_{1} \right\rangle$ is a finite presentation of $\Gamma$.

Let us also consider relations corresponding to the inclusion $\Lambda \leq \overline{\Lambda}$ in the group $\Gamma \rpc \Lambda$. That is, for every $i \in \left\{1, \ldots, n\right\}$, we let $\bar{s_i} \in \overline{\Lambda}$ be such that $s_i = \bar{s_i}$ in $\Gamma \rpc \Lambda$, and denote by $R_{2}$ the set of words $s_i \bar{s_i}^{-1}$.

We denote by $R_{3}$ the set of relations of the form $u_1u_2=u_3$, $u_i \in \overline{\Lambda}$.

Now let us define the abstract group $G_1 = \left\langle S \cup \overline{\Lambda} \mid R_1, R_2, R_3 \right\rangle$. Note that by construction there is a homomorphism $G_1 \rightarrow \Gamma \rpc \Lambda$.

\begin{prop} \label{prop1-rpc}
Let $w$ be a word in the elements of $S$ and $u \in \overline{\Lambda}$. If the word $u^{-1}w$ represents the identity in $\Gamma \rpc \Lambda$, then it already represents the identity in $G_1$.
\end{prop}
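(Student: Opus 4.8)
The plan is to show that the hypothesis forces $u$ to lie in $\Lambda$, and then to ``recompute'' both $w$ and $u$ inside $\overline{\Lambda}$ using the relations $R_2$ and $R_3$, reconciling the two computations by means of $R_1$. First I would unwind the hypothesis: saying that $u^{-1}w$ is trivial in $\Gamma \rpc \Lambda$ means $w = u$ there, where the word $w$ represents an element of $\Gamma$ and $u \in \overline{\Lambda}$. By Lemma~\ref{inter-rpc} the intersection $\Gamma \cap \overline{\Lambda}$ is exactly $\Lambda$, so $u$ lies in the image of $\Lambda$; fixing a preimage $\lambda \in \Lambda$ and writing it as a word $w'$ in the generators $s_1, \ldots, s_n$ of $\Lambda$, I obtain $w = w' = u$ in $\Gamma \rpc \Lambda$. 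The goal then becomes $w = u$ in $G_1$.

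The technical heart is an evaluation step, which I would isolate as follows: \emph{any} word $\omega$ in the letters $s_1, \ldots, s_n$ representing an element $\mu \in \Lambda$ equals, in $G_1$, the single generator $\bar{\mu} \in \overline{\Lambda}$ given by the image of $\mu$ in $\Gamma \rpc \Lambda$. Indeed, $R_2$ lets me replace each letter $s_i^{\pm 1}$ by $\bar{s_i}^{\pm 1} \in \overline{\Lambda}$, turning $\omega$ into a word in the generating set $\overline{\Lambda}$; the relations $R_3$, which encode the full multiplication table of $\overline{\Lambda}$, then collapse this word to the single generator equal to the genuine product in $\overline{\Lambda}$, namely $\bar{\mu}$. (One checks along the way that the identity of $\overline{\Lambda}$ is trivial in $G_1$ and that $\bar{s_i}^{-1}$ is the inverse of $\bar{s_i}$ in $G_1$, both immediate from $R_3$.) Applying this to $\omega = w'$ and $\mu = \lambda$ gives $w' = u$ in $G_1$.

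It remains to prove $w = w'$ in $G_1$, and here lies the main obstacle: $w$ and $w'$ need \emph{not} represent the same element of the abstract group $\Gamma$, because the map $\Gamma \rightarrow \Gamma \rpc \Lambda$ is in general not injective. Its kernel is the normal core $N$ of $\Lambda$, so all I know is that $w(w')^{-1}$ represents some $n_0 \in N$. The saving point is that $N \subseteq \Lambda$: thus $n_0$ can also be written as a word $w''$ in $s_1, \ldots, s_n$, and since $w(w')^{-1}$ and $w''$ represent the same element of $\Gamma$, the relations $R_1$ (a presentation of $\Gamma$) give $w(w')^{-1} = w''$ in $G_1$. Finally the evaluation step applied to $\omega = w''$ and $\mu = n_0$ shows $w'' = e$ in $G_1$, because $n_0 \in N = \ker(\Gamma \rightarrow \Gamma \rpc \Lambda)$ has trivial image in $\overline{\Lambda}$. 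Hence $w(w')^{-1} = 1$, so $w = w' = u$ in $G_1$, which is the desired conclusion.

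I expect the only delicate point to be the bookkeeping around non-injectivity just described; once it is recognized that the discrepancy between $w$ and $w'$ lands in the normal core and that the normal core sits inside $\Lambda$ (hence can be routed through $\overline{\Lambda}$ and killed by $R_2$ and $R_3$), the remaining manipulations are purely formal combinatorics in the free group on $S \cup \overline{\Lambda}$.
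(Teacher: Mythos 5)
Your proof is correct, and it runs on the same engine as the paper's: Lemma~\ref{inter-rpc} to land in $\Lambda$, the relations $R_1$ to manipulate words inside $\Gamma$, $R_2$ to convert each letter $s_i$ into $\bar{s_i}$, and $R_3$ (the multiplication table of $\overline{\Lambda}$) to collapse the resulting word to a single letter. The one genuine difference is your explicit handling of the non-injectivity of $\Gamma \rightarrow \Gamma \rpc \Lambda$. The paper's proof picks a word $w_{\Lambda}$ in $s_1,\ldots,s_n$ with $w = w_{\Lambda}$ in $\Gamma \rpc \Lambda$ and then invokes $R_1$ to conclude $w = w_{\Lambda}$ in $G_1$; strictly speaking, $R_1$ only yields relations that hold in the abstract group $\Gamma$, so this step tacitly uses the fact that the element of $\Gamma$ represented by $w$ already lies in $\Lambda$ --- which is true because the kernel $N$ of $\Gamma \rightarrow \Gamma \rpc \Lambda$ is the normal core of $\Lambda$, hence $N \subseteq \Lambda$ and the preimage of the image of $\Lambda$ is $\Lambda N = \Lambda$. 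Your detour through the normal core (writing $w(w')^{-1}$ as a word $w''$ representing $n_0 \in N \subseteq \Lambda$, then killing $w''$ with your evaluation step) makes exactly this point explicit, at the cost of a slightly longer argument; had you observed the preimage fact directly, you could have chosen $w'$ to represent the same element of $\Gamma$ as $w$ and skipped the reconciliation step entirely, which is in effect what the paper does.
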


\begin{proof}
The fact that $u^{-1}w$ represents the identity in $\Gamma \rpc \Lambda$ means that the element represented by $w$ lies in $\Gamma \cap \overline{\Lambda}$, which is reduced to $\Lambda$ according to Lemma \ref{inter-rpc}. Therefore there exists a word $w_{\Lambda}$ in the letters $s_1, \ldots, s_n$, so that $w = w_{\Lambda}$ in $\Gamma \rpc \Lambda$. But thanks to $R_1$, the relation $w = w_{\Lambda}$ is also satisfied in $G_1$. Now for each letter of $w_{\Lambda}$ we can apply a relation from $R_2$ to obtain a word $w_{\overline{\Lambda}}$ in the letters $\bar{s_1}, \ldots, \bar{s_n}$, so that $w = w_{\overline{\Lambda}}$ in $G_1$. Consequently the relation $w_{\overline{\Lambda}} = u$ holds in $\Gamma \rpc \Lambda$, and thanks to $R_3$ this relation also holds in $G_1$, meaning that $w=u$ in $G_1$.
\end{proof}

We finally consider a last family of relations in $\Gamma \rpc \Lambda$. According to Lemma \ref{decomp-rpc}, for every $i \in \left\{1, \ldots, m\right\}$ and $u \in \overline{\Lambda}$, we can pick some $u' \in\overline{\Lambda}$ and some word $w \in S^*$ so that $s_i u = u' w$ in $\Gamma \rpc \Lambda$. We denote by $R_4$ the set of corresponding relations.

Now let us define the abstract group $G_2 = \left\langle S \cup \overline{\Lambda} \mid R_1, R_2, R_3, R_4 \right\rangle$. Note that the group $G_2$ is a quotient of $G_1$.

\begin{prop} \label{prop2-rpc}
The natural homomorphism $G_2 \rightarrow \Gamma \rpc \Lambda$ is an isomorphism. 
\end{prop}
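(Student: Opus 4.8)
The plan is to show that the natural surjection $G_2 \to \Gamma \rpc \Lambda$ is injective, i.e.\ that any word in the generating set $S \cup \overline{\Lambda}$ representing the identity in $\Gamma \rpc \Lambda$ already represents the identity in $G_2$. Since we already have Proposition \ref{prop1-rpc} at our disposal, which handles words of the very special normal form $u^{-1}w$ with $u \in \overline{\Lambda}$ and $w \in S^*$, the entire strategy reduces to a normal-form argument: I would prove that every word over $S \cup \overline{\Lambda}$ is equal in $G_2$ to a word of the form $u \cdot w$, with $u \in \overline{\Lambda}$ a single generator and $w \in S^*$. This mirrors exactly the decomposition $\Gamma \rpc \Lambda = \overline{\Lambda} \cdot \Gamma$ of Lemma \ref{decomp-rpc}, now promoted from the level of the topological group to the level of the abstract presentation $G_2$.

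First I would establish this normal-form reduction. Given an arbitrary word in the letters of $S \cup \overline{\Lambda}$, I would push all the $\overline{\Lambda}$-letters to the left, one at a time, scanning from left to right. The key move is the relator set $R_4$: whenever a pattern $s_i u$ occurs (an $S$-letter immediately followed by a $\overline{\Lambda}$-letter), the relation $s_i u = u' w$ lets me replace it by $u'$ followed by a word $w \in S^*$, thereby moving the $\overline{\Lambda}$-letter one step to the left past $s_i$. A word $s_i^{-1} u$ is handled the same way after rewriting (using that $R_4$ applied to $s_i$ together with $R_3$ lets one commute $u$ leftward past $s_i^{-1}$ as well, possibly replacing $u$ by another element of $\overline{\Lambda}$ and inserting letters of $S$). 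Repeating this, all $\overline{\Lambda}$-letters migrate to the front of the word, where consecutive ones can be fused into a single $\overline{\Lambda}$-letter using $R_3$. The outcome is a word $u\, w$ with $u \in \overline{\Lambda}$ and $w \in S^{*}$, equal to the original word in $G_2$.

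Once the normal form is achieved, the conclusion is immediate. Suppose a word $W$ over $S \cup \overline{\Lambda}$ represents the identity in $\Gamma \rpc \Lambda$. By the reduction above, $W = u\,w$ in $G_2$ for some $u \in \overline{\Lambda}$, $w \in S^*$; since the map $G_2 \to \Gamma \rpc \Lambda$ sends $W$ to the identity, the word $u\,w$ also represents the identity in $\Gamma \rpc \Lambda$, that is $u^{-1}(w^{-1})^{-1}$—more precisely the word $u^{-1}\cdot(uw)=w$ shows $w$ represents $u^{-1}$, so $u\,w$ is of exactly the shape covered by Proposition \ref{prop1-rpc} (after the harmless inversion $u^{-1}w' $ with $w'=w$). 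Applying Proposition \ref{prop1-rpc}, the word already represents the identity in $G_1$, hence a fortiori in its quotient $G_2$. Therefore the kernel of $G_2 \to \Gamma \rpc \Lambda$ is trivial, and since the map is surjective by Lemma \ref{decomp-rpc}, it is an isomorphism; Theorem \ref{thm-sch} follows because the relators $R_1,\dots,R_4$ all have bounded length with respect to the compact generating set $S \cup \overline{\Lambda}$.

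The main obstacle I anticipate is the bookkeeping in the leftward-migration step, specifically handling the inverse letters $s_i^{-1}$ and verifying that the $R_4$ relations genuinely suffice to commute a $\overline{\Lambda}$-element across any generator in either direction. One must check that the finitely many relations in $R_4$ (one family for each $s_i$ and each $u \in \overline{\Lambda}$) close up correctly so that each single pass strictly decreases, say, the number of $S$-letters lying to the left of some $\overline{\Lambda}$-letter, guaranteeing that the rewriting terminates. The other point requiring a little care is matching the precise syntactic shape of the resulting word $u\,w$ with the hypothesis $u^{-1}w$ of Proposition \ref{prop1-rpc}, which is purely a matter of replacing $u$ by $u^{-1}$ (legitimate since $\overline{\Lambda}$ is closed under inversion and the relevant relation is available through $R_3$).
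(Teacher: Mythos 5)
Your proposal is correct and takes essentially the same approach as the paper's own proof: surjectivity is immediate since $S \cup \overline{\Lambda}$ generates $\Gamma \rpc \Lambda$, and injectivity is obtained by using $R_4$ to migrate every $\overline{\Lambda}$-letter to the left, $R_3$ to fuse the resulting prefix into a single element of $\overline{\Lambda}$, and then invoking Proposition \ref{prop1-rpc} (after the harmless substitution $u \leftrightarrow u^{-1}$). The two bookkeeping points you flag---treating inverse letters $s_i^{-1}$ and guaranteeing termination of the rewriting---are passed over in silence in the paper as well, and are most cleanly dispatched by taking $S$ symmetric from the outset.
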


\begin{proof}
It is clear that this morphism is onto because $S \cup \overline{\Lambda}$ is a generating set of $\Gamma \rpc \Lambda$. So we only have to prove that it is injective. For this, let us consider a word $w$ in the elements of $S \cup \overline{\Lambda}$ representing the identity in $\Gamma \rpc \Lambda$. We want to prove that $w$ represents the identity in $G_2$. Applying successively relators from $R_4$, we can move each occurrence of an element of $\overline{\Lambda}$ in $w$ to the left, and obtain a word $w'$ of the form $w' = u_1 \cdots u_k s_{i_1} \cdots s_{i_{\ell}}$, with $u_i \in \overline{\Lambda}$, $s_j \in S$, so that $w = w'$ in $G_2$. Now thanks to $R_3$, the word $w'$ can be transformed into a word $w''$ of the form $w'' = u s_{i_1} \cdots s_{i_{\ell}}$, $u \in \overline{\Lambda}$. But now since $w$ represents the identity in $\Gamma \rpc \Lambda$, the same holds for $w''$. Therefore by Proposition \ref{prop1-rpc}, the word $w''$ represents the identity in $G_1$, and a fortiori it also represents the identity in $G_2$, the latter being a quotient of $G_1$. It follows that the word $w$ represents the identity in $G_2$, and the proof is complete.
\end{proof}

\begin{rmq}
Here we do not try to get an estimate on the Dehn function of $\Gamma \rpc \Lambda$, because a careful reading of the proof reveals that the best we could hope in this level of generality is to obtain that the Dehn function of $\Gamma \rpc \Lambda$ is bounded by the Dehn function of $\Gamma$. However this would be far from being sharp, as for example the Baumslag-Solitar group $\mathrm{BS}(1,n)$ has an exponential Dehn function for $n \geq 2$ (see for instance \cite{Gro-Herm}), whereas its \sch completion $\mathbb{Q}_n \rtimes_n \mathbb{Z}$ is Gromov-hyperbolic, and therefore has a linear Dehn function.
\end{rmq}

\section{Almost automorphism groups associated with regular branch groups} \label{Gaaut-type} 

Almost automorphisms of $\treed$ are homeomorphisms of the boundary $\bordd$ which locally coincide with a tree automorphism. It seems natural to extend this definition by considering a subgroup $G \leq \autd$, and homeomorphisms of $\bordd$ which locally coincide with an element of $G$. In other words, we want to define homeomorphisms of $\bordd$ which are piecewise in $G$. It turns out that the notion naturally appearing for $G$ is self-similarity, whose definition is recalled below. 

Note that we restrict ourselves to the case $k=d$ for simplicity, but the results could naturally be extended to almost automorphism subgroups of $\aaut$.

\subsection{Preliminary material}

This section is devoted to reviewing basic definitions and facts about self-similar and branch groups, and establishing some preliminary results. We refer the reader to \cite{Nek-ssg,branch} for more on self-similar and branch groups.

Recall that vertices of $\treed$ are labeled by words over a finite alphabet $X$ of cardinality $d$, and we freely identify a vertex with the word associated to it.

If $G$ is a subgroup of the automorphism group $\autd$ and if $n \geq 0$, we will denote by $G_n$ the $n$th level stabilizer of $G$, that is the subgroup of $G$ fixing pointwise the $n$th level of $\treed$. Note that $G_n$ is always a finite index subgroup of $G$, but the converse is far from true because there may exist some finite index subgroup of $G$ not containing any level stabilizer. This motivates the following definition.

\begin{defi}
A finite index subgroup of $G$ is a congruence subgroup if it contains some level stabilizer.
\end{defi}

If $g \in \autd$ is an automorphism and $v \in X^*$ is a vertex of $\treed$, the section of $g$ at $v$ is the unique automorphism $g_v$ of $\treed$ defined by the formula \[ g(vw) = g(v)g_v(w) \] for every $w \in X^*$.

\begin{defi}
A subgroup $G \leq \autd$ is self-similar if every section of every element of $G$ is an element of $G$.
\end{defi}

Self-similar groups appear naturally when studying holomorphic dynamics and fractal geometry. The study of self-similar groups is also motivated by the fact that this class contains examples of groups exhibiting some exotic behavior. Among self-similar groups is a class of groups which is better understood, namely contracting self-similar groups.

\begin{defi}
A self-similar group $G$ is said to be contracting if there exists a finite subset $\mathcal{N} \leq G$ such that for every $g \in G$, there exists $k \geq 1$ so that all the sections of $g$ of level at least $k$ belong to $\mathcal{N}$.
\end{defi}

Here we are interested in a particular class of self-similar groups, namely regular branch groups, whose definition is recalled below.

\begin{defi}
Let $G \leq \autd$ be a self-similar group. By definition, $G$ comes equipped with an injective homomorphism $\psi: G \rightarrow G \wr \mathrm{Sym}(d)$ (sometimes called the wreath recursion). We say that $G$ is regular branch over its finite index subgroup $K$ if $\psi(K)$ contains $K \times \ldots \times K$ as a subgroup of finite index.
\end{defi}

\begin{rmq}
We note that being regular branch is stable by taking the topological closure in $\autd$. More precisely, if $G$ is regular branch over $K$ then the closure of $G$ is regular branch over the closure of $K$. Note also that if $K$ contains some level stabilizer of $G$ then its closure contains the stabilizer of the same level in the closure of $G$, so being regular branch over a congruence subgroup is also stable by taking the topological closure.
\end{rmq}

The most popular example of a self-similar group is the Grigorchuk group of intermediate growth introduced in \cite{grig80}. It is a regular branch group, branching over a subgroup containing its stabilizer of level $3$. Other examples are the Gupta-Sidki group as well as the Fabrykowski-Gupta group, which are regular branch over their commutator subgroup, the latter containing their level $2$ stabilizer. For the definitions and properties of these groups we refer the reader to Sections $6$ and $8$ of \cite{BG-Hecke}. In view of Theorem \ref{thm-cp-gaaut}, we note that all these examples are contracting.


In the following standard lemma, a proof of which can be consulted in \cite[Lemma 10]{ZS-haus}, the isomorphism is obtained via the wreath recursion, which is usually omitted.

\begin{lem} \label{lem-regbr}
Let $H \leq \autd$ be a regular branch group, branching over a subgroup containing the level stabilizer $H_{s}$. Then for every $n \geq s$, the level stabilizer $H_{n+1}$ is isomorphic to $H_n \times \ldots \times H_n$.
\end{lem}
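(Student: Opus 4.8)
The plan is to show that the wreath recursion itself furnishes the isomorphism. Recall that $\psi \colon H \to H \wr \mathrm{Sym}(d) = H^d \rtimes \mathrm{Sym}(d)$ is injective, and that for $g$ fixing the first level of $\treed$ one has $\psi(g) = (g_{x_1}, \ldots, g_{x_d}) \in H^d$, the tuple of sections of $g$ at the level-one vertices $x_1, \ldots, x_d$. Thus $\psi$ restricts to an injective homomorphism $H_1 \to H^d$, and the whole statement reduces to checking that this restriction maps $H_{n+1}$ bijectively onto the subgroup $H_n \times \cdots \times H_n \leq H^d$.

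For the inclusion $\psi(H_{n+1}) \subseteq H_n \times \cdots \times H_n$, I would unwind the defining formula $g(x_i w) = g(x_i) g_{x_i}(w)$. An element $g \in H_{n+1}$ fixes the first level, so $\psi(g) = (g_{x_1}, \ldots, g_{x_d})$, and for each $i$ and each word $w$ of length $n$ the equality $g(x_i w) = x_i g_{x_i}(w)$ shows that $g$ fixes the level-$(n+1)$ vertex $x_i w$ if and only if the section $g_{x_i}$ fixes the level-$n$ vertex $w$. Since $g$ fixes all of level $n+1$, every section $g_{x_i}$ fixes all of level $n$, i.e.\ $g_{x_i} \in H_n$ (it lies in $H$ by self-similarity). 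Hence $\psi(H_{n+1}) \subseteq H_n \times \cdots \times H_n$.

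The reverse inclusion is where the hypotheses $n \geq s$ and branching enter, and I expect it to be the crux. Since the level stabilisers form a decreasing chain $H = H_0 \supseteq H_1 \supseteq \cdots$, for $n \geq s$ we have $H_n \subseteq H_s \subseteq K$, where $K$ is the branching subgroup. The branching condition gives $K \times \cdots \times K \leq \psi(K)$, whence $H_n \times \cdots \times H_n \subseteq K \times \cdots \times K \subseteq \psi(K) \subseteq \psi(H)$. Finally, any $g \in H$ with $\psi(g) \in H_n \times \cdots \times H_n$ has trivial permutation part, hence lies in $H_1$, and by the bookkeeping of the previous paragraph the fact that its sections lie in $H_n$ forces $g \in H_{n+1}$; so every element of $H_n \times \cdots \times H_n$ is already the image of an element of $H_{n+1}$. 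Combined with the first inclusion this yields $\psi(H_{n+1}) = H_n \times \cdots \times H_n$, and injectivity of $\psi$ upgrades this equality of images to the desired isomorphism. The only point requiring care is the passage from ``sections fix level $n$'' to ``element fixes level $n+1$'', which is immediate once the section formula is unwound, together with the observation that the congruence condition $H_s \subseteq K$ is precisely what makes $\psi(K)$ large enough to reach all of $H_n \times \cdots \times H_n$.
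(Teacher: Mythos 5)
Your proof is correct, and it is exactly the argument the paper has in mind: the paper does not prove the lemma itself but cites \cite[Lemma 10]{ZS-haus} and notes that ``the isomorphism is obtained via the wreath recursion,'' which is precisely your route. Your two inclusions --- sections of level-$(n+1)$ stabilizing elements lie in $H_n$, and conversely $H_n \times \cdots \times H_n \subseteq K \times \cdots \times K \subseteq \psi(K)$ with the preimage forced into $H_{n+1}$ by the section formula --- together with injectivity of $\psi$ constitute the standard proof being referenced, so there is nothing to correct.
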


If $H$ is a subgroup of the automorphism group $\autd$, it is in general very hard to describe its topological closure in $\autd$. In the case of the Grigorchuk group, the closure has been described by Grigorchuk in \cite{grig-solved}. We will use a generalization of this result due to Sunic, which is the following:

\begin{prop} \label{prop-zs}
Let $H \leq \autd$ be a regular branch group, branching over a subgroup containing the level stabilizer $H_{s}$, and let $G$ be the topological closure of $H$ in $\autd$. Then an element $\gamma \in \autd$ belongs to $G$ if and only if for every section $\gamma_{v}$ of $\gamma$, there exists an element of $H \leq \autd$ acting like $\gamma_v$ up to and including level $s+1$.
\end{prop}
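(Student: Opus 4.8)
The plan is to prove the two implications separately; the necessity is straightforward, while the sufficiency rests on an induction that assembles local approximations by means of the branch structure recorded in Lemma \ref{lem-regbr}. Throughout I write $\gamma_v$ for the section of $\gamma$ at $v$, and I use that the topology on $\autd$ is such that $\gamma$ lies in the closure $G = \overline{H}$ if and only if for every $N$ there is an element of $H$ agreeing with $\gamma$ on all vertices up to level $N$. For the necessity, suppose $\gamma \in G$ and fix a vertex $v$. Applying the closure condition at level $N = |v| + s + 1$ yields $h \in H$ agreeing with $\gamma$ up to level $|v|+s+1$; in particular $h(v) = \gamma(v)$, and comparing $h(vw) = h(v) h_v(w)$ with $\gamma(vw) = \gamma(v)\gamma_v(w)$ for all $w$ with $|w| \leq s+1$ shows that $h_v$ agrees with $\gamma_v$ up to level $s+1$. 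Since $H$ is self-similar, $h_v \in H$, which is exactly what is required.

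The sufficiency is the main point. I first record two stability properties of the hypothesis, which I call the \emph{pattern condition} (every section of $\gamma$ agrees with some element of $H$ up to level $s+1$): it passes to sections, because $(\gamma_v)_w = \gamma_{vw}$; and it is preserved under left multiplication by any $h \in H$, because $(h\gamma)_v = h_{\gamma(v)}\gamma_v$ with $h_{\gamma(v)} \in H$, and left translating by a fixed automorphism preserves the depth to which two maps agree. Granting these, I prove by induction on $m \geq 0$ the statement: every $\alpha \in \autd$ satisfying the pattern condition admits an element of $H$ agreeing with it up to level $s+1+m$. Letting $m \to \infty$ then gives $\alpha \in \overline{H} = G$. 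The base case $m=0$ is the pattern condition applied at the root, where $\alpha_\varnothing = \alpha$.

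For the inductive step, given $\alpha$ satisfying the pattern condition, I use the base case to find $h_0 \in H$ agreeing with $\alpha$ up to level $s+1$ and set $\beta = h_0^{-1}\alpha$; then $\beta$ fixes the first $s+1$ levels and, by the two stability properties, still satisfies the pattern condition. Each first-level section $\beta_x$ ($x \in X$) therefore satisfies the pattern condition and fixes the first $s$ levels, so the induction hypothesis provides $f_x \in H$ agreeing with $\beta_x$ up to level $s+1+m$; since $\beta_x$ fixes the first $s$ levels, $f_x$ lies in $H_s$. This is precisely the input needed for Lemma \ref{lem-regbr}: the isomorphism $H_{s+1} \cong H_s \times \cdots \times H_s$ given by first-level sections lets me assemble the tuple $(f_x)_{x \in X}$ into a single $c \in H_{s+1}$ with $c_x = f_x$ for every $x$. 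A direct check using $c(xw) = x\, f_x(w)$ and $\beta(xw) = x\, \beta_x(w)$ shows that $c$ agrees with $\beta$ up to level $s+2+m$, whence $h_0 c \in H$ agrees with $\alpha$ up to level $s+1+(m+1)$, completing the induction.

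The step I expect to be the crux is exactly this assembly: one must arrange the local approximants $f_x$ to lie in a level stabilizer so that the branch structure of Lemma \ref{lem-regbr} can glue them into a genuine element of $H$, and this is what dictates the bookkeeping in terms of the level $s$ over which $H$ branches. The two stability properties of the pattern condition, although elementary, are what keep the induction self-sustaining as one descends through successive sections.
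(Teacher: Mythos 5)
Your proof is correct, but it takes a genuinely different route from the paper: the paper does not prove the statement at all, instead citing it as a reformulation of the implication $(ii) \Rightarrow (i)$ of Theorem 3 of \cite{ZS-haus}, together with the remark that Sunic's level-transitivity assumption is not actually used there. You give a self-contained argument, and it holds up under scrutiny: the necessity direction is the standard section-comparison computation; the two stability properties of the pattern condition (passage to sections via $(\gamma_v)_w = \gamma_{vw}$, and invariance under left multiplication by elements of $H$ via $(h\gamma)_v = h_{\gamma(v)}\gamma_v$) are verified correctly; and the inductive step legitimately invokes Lemma \ref{lem-regbr} with $n = s$, since the approximants $f_x$ are forced into $H_s$ precisely because the sections $\beta_x$ of $\beta = h_0^{-1}\alpha$ fix the first $s$ levels, so the surjectivity of the wreath-recursion isomorphism $H_{s+1} \cong H_s \times \cdots \times H_s$ glues them into a single element $c \in H_{s+1}$ agreeing with $\beta$ one level deeper. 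What your approach buys is twofold: the proposition becomes independent of the external reference, and the role of the hypotheses becomes transparent --- in particular, your argument never uses level transitivity, which makes rigorous the paper's passing claim that this assumption can be dropped, and it isolates exactly where the branching hypothesis (through Lemma \ref{lem-regbr}) enters. The cost is length; the paper's citation is a one-line proof, and your induction essentially reconstructs the content of Sunic's theorem in the special case needed here.
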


\begin{proof}
The statement is a reformulation of the implication $(ii) \Rightarrow (i)$ of Theorem $3$ of \cite{ZS-haus}. Note that the author requires level transitivity in the definition of a regular branch group, but the proof given there does not use this assumption. 
\end{proof}

This description of the closure of a regular branch group allows us to deduce the following result, which does not seem to appear in the literature, and which may be of independent interest. 
 
\begin{prop} \label{prop-intclos}
Let $H \leq \autd$ be a regular branch group, branching over a congruence subgroup, and let $G$ be the topological closure of $H$ in $\autd$. Then the intersection in $\autd$ between $G$ and $H \wr \mathrm{Sym}(d)$ is equal to $H$.
\end{prop}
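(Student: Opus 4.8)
The inclusion $H \subseteq G \cap (H \wr \mathrm{Sym}(d))$ is immediate and requires no hypothesis: on the one hand $H$ sits inside its closure $G$, and on the other hand self-similarity of $H$ means that every element of $H$ has all of its first-level sections in $H$, so $H \subseteq H \wr \mathrm{Sym}(d)$ (under the identification of $H \wr \mathrm{Sym}(d)$ with a subgroup of $\autd$ via the wreath recursion). The entire content is therefore the reverse inclusion, and the plan is to take an arbitrary $\gamma \in G \cap (H \wr \mathrm{Sym}(d))$, peel off its top $s+1$ levels using the description of the closure, and then recognize the remainder inside the branch structure.

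Let $s$ be such that $H$ branches over a subgroup containing the level stabilizer $H_s$ (this is where the congruence assumption is used). First I would apply Proposition \ref{prop-zs} to the section of $\gamma$ at the root: since $\gamma \in G$, there exists $h_0 \in H$ acting like $\gamma$ up to and including level $s+1$. Set $\beta = h_0^{-1}\gamma$. Then $\beta$ fixes pointwise every vertex of level at most $s+1$, and moreover $\beta \in H \wr \mathrm{Sym}(d)$, since both $h_0^{-1}$ (as an element of the self-similar group $H \subseteq H \wr \mathrm{Sym}(d)$) and $\gamma$ lie in the subgroup $H \wr \mathrm{Sym}(d)$ of $\autd$. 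It then suffices to prove $\beta \in H$, since $\gamma = h_0 \beta$.

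The last step exploits the wreath decomposition. Denoting the wreath recursion by $\psi$, membership $\beta \in H \wr \mathrm{Sym}(d)$ gives $\psi(\beta) = (\beta_1, \ldots, \beta_d)\tau$ with each first-level section $\beta_i \in H$ and $\tau \in \mathrm{Sym}(d)$; since $\beta$ fixes the first level, $\tau = \mathrm{id}$. As $\beta$ fixes every vertex up to level $s+1$, each $\beta_i$ fixes every vertex up to level $s$, and being an element of $H$ it lies in $H_s$. Hence $\psi(\beta) \in H_s \times \cdots \times H_s$. Now Lemma \ref{lem-regbr} identifies $H_s \times \cdots \times H_s$ with $\psi(H_{s+1})$ through the very same wreath recursion, so injectivity of $\psi$ yields $\beta \in H_{s+1} \subseteq H$, which finishes the argument.

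The place where the two hypotheses genuinely interact is the passage from $G$ back to $H$. A priori the topological closure $G$ could be far larger than $H$, and the only thing controlling it is the description of the closure in Proposition \ref{prop-zs}, which costs exactly $s+1$ levels of agreement; the extra assumption $\gamma \in H \wr \mathrm{Sym}(d)$ is precisely what guarantees that the first-level sections of the remainder $\beta$ are honest elements of $H$, so that the branch identity $H_s \times \cdots \times H_s = \psi(H_{s+1})$ can be invoked rather than its analogue for $G$. I expect the only delicate bookkeeping to be matching the level $s$ supplied by the branching/congruence condition with the level $s+1$ appearing both in Proposition \ref{prop-zs} and in Lemma \ref{lem-regbr}, but no substantial obstacle beyond that.
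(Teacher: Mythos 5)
Your proof is correct, but it follows a genuinely different route from the paper's. The paper establishes the reverse inclusion by an index count: it computes, via Lemma \ref{lem-regbr} and multiplicativity of the index, that $\left[ H \wr \mathrm{Sym}(d) : H \right] = d!\left[H:H_s\right]^d/\left[H:H_{s+1}\right]$, and then uses Proposition \ref{prop-zs} to show that $G \cap (H \wr \mathrm{Sym}(d))$ has exactly the same index in $H \wr \mathrm{Sym}(d)$ (membership of an element of $H \wr \mathrm{Sym}(d)$ in $G$ only depends on its action up to level $s+1$, so the index is a ratio of numbers of level-$(s+1)$ patterns); since one subgroup contains the other and both have the same finite index, they coincide. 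You instead argue element by element: Proposition \ref{prop-zs} applied to the root section produces $h_0 \in H$ agreeing with $\gamma$ up to level $s+1$, the remainder $\beta = h_0^{-1}\gamma$ lies in the first-level stabilizer of $H \wr \mathrm{Sym}(d)$ with all first-level sections in $H_s$, and the identification $\psi(H_{s+1}) = H_s \times \cdots \times H_s$ furnished by Lemma \ref{lem-regbr} (the isomorphism being the wreath recursion, as the paper notes) gives $\beta \in H_{s+1}$, hence $\gamma = h_0\beta \in H$. Both proofs consume the same two ingredients, invoked at the same level $s$ coming from the congruence assumption, but assemble them differently. Your version is arguably more transparent: it avoids all finite-index bookkeeping and pattern counting, and it locates each element of the intersection explicitly in a coset $h_0 H_{s+1}$. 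The paper's version, in exchange, yields the explicit index formula for $H$ in $H \wr \mathrm{Sym}(d)$ as a by-product, which quantifies the failure of the inclusion to be an equality of groups.
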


\begin{proof}
The inclusion $H \subset G \cap (H \wr \mathrm{Sym}(d))$ is clear because $H \subset G$ is always true and $H \subset  H \wr \mathrm{Sym}(d)$ is satisfied by self-similarity. To prove that equality holds, we prove that $H$ and $G \cap (H \wr \mathrm{Sym}(d))$ have the same index in the group $H \wr \mathrm{Sym}(d)$. 

Assume that $H$ is branching over a subgroup containing $H_{s}$. By multiplicativity of the index, we have \[ \left[ H \wr \mathrm{Sym}(d) : H_{s+1} \right] = \left[ H \wr \mathrm{Sym}(d) : H \right] \times \left[ H : H_{s+1} \right], \] that is \[ \left[ H \wr \mathrm{Sym}(d) : H \right] = \frac{\left[ H \wr \mathrm{Sym}(d) : H_{s+1} \right]}{\left[ H : H_{s+1} \right]}.\]
Now the number of possibilities for the action of an element of $H \wr \mathrm{Sym}(d)$ on the first level is $|\mathrm{Sym}(d)| = d!$. Moreover the first level stabilizer of $H \wr \mathrm{Sym}(d)$ is $H \times \ldots \times H$, so \begin{equation} \label{eq-ind} \left[ H \wr \mathrm{Sym}(d) : H_{s+1} \right] = d! \left[ H \times \ldots \times H : H_{s+1} \right]. \end{equation} Furthermore we can apply Lemma \ref{lem-regbr} to obtain that $H_{s+1}$ is equal to $H_{s} \times \ldots \times H_{s}$, which yields \[ \left[ H \times \ldots \times H : H_{s+1} \right] = \left[ H \times \ldots \times H : H_{s} \times \ldots \times H_{s} \right] = \left[ H : H_s \right]^d. \] Going back to (\ref{eq-ind}), we obtain \[ \left[ H \wr \mathrm{Sym}(d) : H \right] = \frac{d! \left[ H : H_s \right]^d}{\left[ H : H_{s+1} \right]}.\]

Let us now compute the index of $G \cap (H \wr \mathrm{Sym}(d))$ in $H \wr \mathrm{Sym}(d)$. According to Proposition \ref{prop-zs}, an element $\gamma \in \autd$ belongs to $G$ if and only if for every section $\gamma_{v}$ of $\gamma$, there exists an element of $H$ acting like $\gamma_v$ up to level $s+1$. Since elements of $H \wr \mathrm{Sym}(d)$ have all their sections of level at least $1$ in $H$, it follows that an element $\gamma \in H \wr \mathrm{Sym}(d)$ belongs to $G$ if and only if there exists an element of $H$ acting like $\gamma$ up to level $s+1$. It follows that the index of $G \cap (H \wr \mathrm{Sym}(d))$ in $H \wr \mathrm{Sym}(d)$ is the number of possibilities for the action on level $s+1$ for $H \wr \mathrm{Sym}(d)$, divided by the number of possibilities for the action on level $s+1$ for $H$. The latter being $\left[H : H_{s+1}\right]$ and the former being $d! \left[H : H_s \right]^d$, we have \[ \left[ H \wr \mathrm{Sym}(d) : G \cap (H \wr \mathrm{Sym}(d)) \right] = \frac{d! \left[ H : H_s \right]^d}{\left[ H : H_{s+1} \right]}.\]
\end{proof}

\subsection{Definition of the groups}

Let $G \leq \autd$ be a self-similar group. We will say that an almost automorphism of $\treed$ is piecewise of type $G$ if it can be represented by a triple $(\psi,T,T')$ such that $T,T'$ are finite rooted complete subtrees of $\treed$ and $\psi: \treed \setminus T \rightarrow  \treed \setminus T'$ belongs to $G$ on each connected component, after the natural identification of each connected component of $\treed \setminus T$ and $\treed \setminus T'$ with $\treed$. We observe that by self-similarity, if a triple $(\psi_1,T_1,T_1')$ is such that $\psi_1: \treed \setminus T_1 \rightarrow  \treed \setminus T_1'$ belongs to $G$ on each connected component, then for any equivalent triple $(\psi_2,T_2,T_2')$ such that $T_2$ (resp.\ $T_2'$) contains $T_1$ (resp.\ $T_1'$), then $\psi_2: \treed \setminus T_2 \rightarrow  \treed \setminus T_2'$ belongs to $G$ on each connected component. It follows from this observation that the set of almost automorphisms which are piecewise of type $G$ is a subgroup of $\aautd$, which will be denoted by $\Gaaut$. Note that $\Gaaut$ obviously contains the group $G$.

It is worth pointing out that the definition of the group requires neither topological (e.g.\ closed) nor finiteness (e.g.\ finitely generated) assumption on $G$.

Following \cite{Nek-fp}, we let $L(G) \leq \autd$ be the embedded copy of $G$ acting on the subtree hanging below the first vertex of level $1$, and being the identity elsewhere. Since the Higman-Thompson group $V_d$ acts transitively on the set of proper balls of $\bordd$, it is not hard to see that the group $\Gaaut$ is generated by $V_d$ together with $L(G)$. See Lemma 5.12 in \cite{Nek-fp} for details. In particular if $G$ is a finitely generated self-similar group, then $\Gaaut$ is finitely generated as well. 

The first example of such a group was considered by Röver when $G$ is the first Grigorchuk group. He proved that $\Gaaut$ is finitely presented and simple \cite{Rov-cfpsg}. Then Nekrashevych \cite{Nek-cp} introduced the group $\Gaaut$ for an arbitrary self-similar group $G$ and generalized both simplicity and finiteness results (see Theorem 4.7 in \cite{Nek-fp} and Theorem \ref{thm-nekfp} cited below).

\begin{rmq} \label{rmq-cor-normal}
It is worth noting that $\Gaaut$ is always a dense subgroup of $\aautd$, since it contains the subgroup $V_d$ which is already dense. In particular if $N$ is a non-trivial normal subgroup of $\Gaaut$, then the closure of $N$ in $\aautd$ is normalized by the closure of $\Gaaut$, which is $\aautd$. By simplicity of the latter, the closure of $N$ has to be equal to $\aautd$. This proves that any non-trivial normal subgroup of $\Gaaut$ is dense in $\aautd$. In particular $G$ can not contain any non-trivial normal subgroup of $\Gaaut$.
\end{rmq}





\subsection{Almost automorphism groups arising as \sch completions}

The main result of this section is the following.

\begin{thm} \label{aaut-sch}
Let $H \leq \autd$ be a regular branch group, branching over a congruence subgroup, and let $G$ be the topological closure of $H$ in $\autd$. Then the inclusion of $\Haaut$ in $\Gaaut$ induces an isomorphism of topological groups between $\Haaut \rpc H$ and $\Gaaut$.
\end{thm}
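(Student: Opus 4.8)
The strategy is to realize $\Gaaut$ as a t.d.l.c.\ group with an explicit compact open subgroup, exhibit $\Haaut$ as a dense subgroup, and then invoke the identification of \sch completions with dense subgroups provided by Proposition \ref{lem-sw}. Concretely, I would first verify that $\Gaaut$ carries a natural t.d.l.c.\ group topology for which $G$ (the closure of $H$ in $\autd$) is a compact open subgroup. This should follow exactly as in the $\Daaut$ case: since $G$ is a closed, hence compact, subgroup of $\autd$, and $\Gaaut$ is generated by $G$ together with $V_d$ (using that $V_d$ acts transitively on proper balls of $\bordd$, as noted after the definition of $\Gaaut$), one checks via Lemma \ref{commens-gen} that $G$ is commensurated in $\Gaaut$, and then applies Lemma \ref{bourb} to produce the topology making $G$ a compact open subgroup. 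The commensuration of $G$ by $V_d$ is the analogue of Lemma \ref{commens}: conjugating an automorphism supported off a finite subtree by an element of $V_d$ again lands in $G$ on a possibly larger finite subtree.

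**Applying the Shalom--Willis criterion.**
Once $\Gaaut$ is a t.d.l.c.\ group with compact open subgroup $G$, I would apply Proposition \ref{lem-sw} with the dense subgroup $\Haaut$. Density of $\Haaut$ in $\Gaaut$ is the key structural input and should follow from the fact that any section of an element of $G$ can be approximated to arbitrary depth by sections of elements of $H$ (this is exactly the content of Sunic's description, Proposition \ref{prop-zs}); consequently any almost automorphism piecewise of type $G$ can be approximated, off a sufficiently large finite subtree, by one piecewise of type $H$. Proposition \ref{lem-sw} then yields $\Gaaut \simeq \Haaut \rpc (\Haaut \cap G) / K$, where $K$ is the normal core of $G$ in $\Gaaut$. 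Two verifications remain: that $G$ contains no non-trivial normal subgroup of $\Gaaut$ (so that $K$ is trivial and the isomorphism is with $\Gaaut$ itself), and that the commensurated subgroup $\Haaut \cap G$ is exactly $H$.

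**The two remaining identifications.**
For triviality of $K$, I would argue as in Remark \ref{rmq-cor-normal}: any non-trivial normal subgroup of $\Gaaut$ is dense in $\aautd$, hence cannot be contained in the compact subgroup $G$; therefore $G$ contains no non-trivial normal subgroup and $K$ is trivial. The computation that $\Haaut \cap G = H$ is where Proposition \ref{prop-intclos} is precisely designed to be used, and I expect this to be the main obstacle. An element of $\Haaut \cap G$ is simultaneously an honest automorphism lying in the closure $G$ and, being piecewise of type $H$, one that off some finite subtree is given by elements of $H$ on each component. One reduces to showing such an element already lies in $H$; since being piecewise $H$ on the complement of a one-caret expansion means lying in $H \wr \mathrm{Sym}(d)$ after one level, the statement becomes precisely that $G \cap (H \wr \mathrm{Sym}(d)) = H$, which is Proposition \ref{prop-intclos}. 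Making the reduction from arbitrary finite subtrees down to the single-caret case rigorous---i.e.\ peeling off one level at a time and invoking self-similarity together with Proposition \ref{prop-intclos} inductively---is the technical heart of the argument.

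**Conclusion.**
Combining these, Proposition \ref{lem-sw} gives the topological isomorphism $\Haaut \rpc H \simeq \Gaaut$, which is the assertion. I note that this, together with Theorem \ref{thm-sch}, would then yield Theorem \ref{thm-cp-gaaut} once one checks that $\Haaut$ is finitely presented (for which one uses the finite presentability results of R\"over and Nekrashevych under the contracting hypothesis) and that $H$ is finitely generated; those finiteness inputs are where the additional hypotheses of Theorem \ref{thm-cp-gaaut} enter, but they are not needed for the purely structural statement of Theorem \ref{aaut-sch} itself.
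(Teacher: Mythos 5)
Your proposal has the same overall architecture as the paper's proof: equip $\Gaaut$ with the t.d.l.c.\ topology making $G$ a compact open subgroup, prove that $\Haaut$ is dense, that $\Haaut \cap G = H$ by peeling off one level at a time and reducing to Proposition \ref{prop-intclos}, that $G$ contains no non-trivial normal subgroup of $\Gaaut$ (Remark \ref{rmq-cor-normal}), and conclude via Proposition \ref{lem-sw}. The last three steps are correctly identified and coincide with the paper's Propositions \ref{prop-G-inter} and \ref{prop-dense-subgroup} and its final argument. The genuine gap is in your first step, the commensuration of $G$ in $\Gaaut$, and it is not a cosmetic one: it is exactly the point where the regular branch hypothesis has to do its work.

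Two things go wrong there. First, the paper's generation statement (citing Nekrashevych) is that $\Gaaut$ is generated by $V_d$ together with $L(G)$, the copy of $G$ acting below a first-level vertex, not by $V_d$ together with $G$; these are not obviously the same subgroup, and with the correct generating set Lemma \ref{commens-gen} also obliges you to prove that $L(G)$ commensurates $G$, a point your sketch never addresses (the paper does it by exhibiting a finite index subgroup $N \times \cdots \times N$ of $G$ normalized by $L(G)$). Second, and more seriously, your mechanism for commensuration by $V_d$ --- ``conjugating an automorphism supported off a finite subtree by an element of $V_d$ again lands in $G$'' --- is false for a general closed regular branch group. That claim amounts to saying that any automorphism fixing a finite complete subtree $T'$ pointwise and having all its sections below the leaves of $T'$ in $G$ already lies in $G$, i.e.\ that $G$ is regular branch \emph{over itself}. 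This holds for $W(D)$, which is why Lemma \ref{commens} works in Section \ref{aaut-type}, but it fails for instance for the closure of the Grigorchuk group, where the wreath recursion image of the first level stabilizer is a proper finite-index subgroup of $G \times G$: one can take $g \in G$ fixing the second level pointwise and $\sigma \in V_d$ permuting the level-two cylinders so that $\sigma g \sigma^{-1}$ exhibits a forbidden pattern and falls outside $G$. The paper's proof conjugates only the subgroups $K_T$ (identity on $T$, sections in the branching subgroup $K$ below the leaves), for which the branch property gives $\sigma K_T \sigma^{-1} = K_{T'} \subseteq K \subseteq G$ with $K_T$ of finite index in $G$. The same omission resurfaces in your density step: to show $\Haaut$ is dense you must place the approximation error $\hat{\gamma}^{-1}\gamma$ in an \emph{open subgroup of} $G$, since those are the neighbourhoods of the identity in $\Gaaut$; approximating sections ``to arbitrary depth'' only makes the error trivial on many levels with sections in $G$, which by the same phenomenon need not put it in $G$ at all. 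The paper arranges the error to lie in $K_T$ by approximating each section within its coset of the closure of the branching congruence subgroup, which is where the hypothesis ``branching over a congruence subgroup'' enters.
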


For example this brings a new perspective to the topologically simple group constructed in \cite{BEW}: this is the \sch completion of Röver's group \cite{Rov-cfpsg} with respect to the Grigorchuk group.

Theorem \ref{aaut-sch} will be proved at the end of this section. We begin by showing how to endow the group $\Gaaut$ with a natural topology when $G$ is a closed regular branch group. We will need the following:

\begin{prop}
Any regular branch group $G \leq \autd$ is commensurated in $\Gaaut$.
\end{prop}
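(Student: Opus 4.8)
The statement asserts that a regular branch group $G \leq \autd$ (here $G$ is closed, being the topological closure of a finitely generated regular branch group) is commensurated inside $\Gaaut$. Recall that $G$ being commensurated means that for every $g \in \Gaaut$, the subgroup $g G g^{-1} \cap G$ has finite index in both $G$ and $g G g^{-1}$. My plan is to reduce this verification to a generating set of $\Gaaut$ via the commensuration criterion already established, namely Lemma \ref{commens-gen}.

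**The reduction step.** Earlier in the excerpt it is noted that $\Gaaut$ is generated by $V_d$ together with the embedded copy $L(G)$ of $G$ (following \cite{Nek-fp}). So by Lemma \ref{commens-gen}, it suffices to check that $G$ is commensurated by each generator, that is, by $V_d$ and by $L(G)$. Commensuration by $L(G) \subseteq G$ is immediate: conjugating $G$ by an element of $G$ itself leaves $G$ invariant, so $gGg^{-1} = G$ for $g \in G$, which is trivially a finite-index situation. Hence the real content is to show that $G$ is commensurated by the Higman-Thompson group $V_d$. This mirrors exactly the structure of Lemma \ref{commens}, where $W_k(D)$ was shown to be commensurated by $V_{d,k}$.

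**The heart of the argument.** For an element $v = (\psi, T, T') \in V_d$, I would consider the compact open subgroup $G_\sigma$ consisting of those automorphisms in $G$ that act trivially on the finite subtree $T$ (in analogy with $W_k(D)_\sigma$ in Section \ref{sec-prel}). Such a subgroup has finite index in $G$ since $G$ is compact and $G_\sigma$ is open. The key computation, just as in the proof of Lemma \ref{commens}, is that conjugating an element $u \in G_\sigma$ by $v$ produces a triple of the form $(\psi', T', T')$ permuting trivially the components of $\treed \setminus T'$; this element is then realized by the tree automorphism of $\treed$ that is the identity on $T'$ and acts on each hanging subtree like the corresponding section of $u$. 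The crucial point where I expect the main obstacle is verifying that this resulting automorphism actually lies in $G$: this is precisely where self-similarity and the regular branch structure enter. Because $G$ is self-similar, the sections of $u$ at the leaves of $T'$ lie in $G$; and because $G$ is regular branch, the level stabilizers decompose as direct products of copies of $G$ (via Lemma \ref{lem-regbr}), so an automorphism acting independently by elements of $G$ on the subtrees hanging below a deep enough level genuinely belongs to $G$. Thus $v\, G_\sigma\, v^{-1} \subseteq G$, giving finite index of $G \cap v G v^{-1}$ inside $v G v^{-1}$; applying the same reasoning to $v^{-1}$ yields finite index on the other side.

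**Summary of the plan and the obstacle.** In short, I would: (1) invoke that $\Gaaut = \langle V_d, L(G)\rangle$; (2) reduce commensuration to the generators by Lemma \ref{commens-gen}; (3) dispose of $L(G)$ trivially; and (4) adapt the proof of Lemma \ref{commens} to handle $V_d$, defining the open finite-index subgroup $G_\sigma$ and showing $v G_\sigma v^{-1} \subseteq G$. The one genuinely nontrivial step — and where the regular branch hypothesis is indispensable rather than decorative — is the closure property of step (4): confirming that an automorphism built from independent $G$-actions on the components of $\treed \setminus T'$ is an element of $G$. I expect this to follow cleanly from the product decomposition of congruence-level stabilizers (Lemma \ref{lem-regbr}) combined with self-similarity, but it is the step that must be stated carefully, since it is the one that fails for arbitrary self-similar groups and requires the branching over a congruence subgroup.
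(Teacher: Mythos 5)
Your overall skeleton matches the paper's proof: reduce to the generators $V_d$ and $L(G)$ via Lemma \ref{commens-gen}, then show that conjugation sends a suitable finite-index subgroup of $G$ back into $G$, in the spirit of Lemma \ref{commens}. However, both of your concrete steps contain genuine errors. First, $L(G)$ is \emph{not} contained in $G$. An element of $L(G)$ is an automorphism of $\treed$ supported on a single level-one subtree, i.e.\ of the form $(g,1,\ldots,1)$ in wreath coordinates, and such an element lies in $G$ only if it lies in the image of the wreath recursion, which generally fails: already for the Grigorchuk group $\Gamma$ the element $(a,1)$ does not belong to $\psi(\mathrm{St}_{\Gamma}(1))$ (an abelianization computation), hence not to $\Gamma$, and by Proposition \ref{prop-intclos} not to the closure $\overline{\Gamma}$ either. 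So the $L(G)$ case is not trivial; the paper handles it by choosing a finite-index subgroup $N \leq K$ that is \emph{normal} in $G$, so that $N \times \cdots \times N \leq \psi(G)$ is a finite-index subgroup of $G$ normalized by $L(G)$.

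Second, and more seriously, your key claim $v\, G_\sigma\, v^{-1} \subseteq G$ is false when $G_\sigma$ is the pointwise fixator of $T$ in $G$. By self-similarity the sections of $u \in G_\sigma$ at the leaves of $T$ do lie in $G$, but they are \emph{correlated}: the tuple of sections is constrained to lie in the image of the wreath recursion, and rearranging the leaves by $v$ can take it outside that image. Concretely, for the Grigorchuk group take $T$ with leaves $00,01,1$ and $T'$ with leaves $0,10,11$, let $v \in V_2$ be the order-preserving element sending $00 \mapsto 0$, $01 \mapsto 10$, $1 \mapsto 11$, and let $u = d = (1,b)$; then $u$ fixes $T$ pointwise, but $v u v^{-1} = (1,d)$, which is not in $\Gamma$ nor in $\overline{\Gamma}$. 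Your justification misquotes Lemma \ref{lem-regbr}: that lemma decomposes level stabilizers as products of \emph{level stabilizers} ($H_{n+1} \cong H_n \times \cdots \times H_n$), not as products of copies of $G$, and the sections $u_\ell$ at the leaves of $T$ need not lie in any deep level stabilizer (in the example, $d \in \Gamma_2 \setminus \Gamma_3$ while the branching subgroup only contains $\Gamma_3$). What the regular branch property actually provides is that \emph{independent} tuples of elements of the branching subgroup $K$ lie in $G$. This is exactly why the paper conjugates not $G_\sigma$ but the subgroup $K_T$ (identity on $T$, acting by an element of $K$ below each leaf), which is of finite index in $G$ by the branch property and satisfies the exact identity $\sigma K_T \sigma^{-1} = K_{T'}$. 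With $K_T$ substituted for your $G_\sigma$, your plan for the $V_d$ generators goes through; as written, both that step and the $L(G)$ step fail.
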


\begin{proof}
Since $\Gaaut$ is generated by $V_d$ and $L(G)$, it is enough to prove that these two subgroups commensurate $G$.

Let us first prove that $V_d$ commensurates $G$. Henceforth we assume that $K$ is a subgroup of $G$ over which $G$ is branching. For every finite rooted complete subtree $T$ of $\treed$, we denote by $K_T$ the subgroup of $\autd$ fixing pointwise $T$ and acting by an element of $K$ on each subtree hanging below a leaf of $T$. Since $G$ is regular branch over $K$, $K_T$ is a finite index subgroup of $G$ for every finite rooted complete subtree $T$. Now if $\sigma \in V_d$ and if $T,T'$ are respectively the domain and range tree of the canonical representative triple of $\sigma$, we easily check that $\sigma K_T \sigma^{-1} = K_{T'}$. So conjugation by $\sigma$ sends a finite index subgroup of $G$ to another finite index subgroup of $G$, which exactly means that $\sigma$ commensurates $G$.

Now let us prove that $L(G)$ commensurates $G$. It is classic that since $K$ is a finite index subgroup of $G$, there exists a finite index subgroup $N$ of $K$ that is normal in $G$. Therefore $\psi(G)$ contains $N \times \ldots \times N$ as a finite index subgroup, and the latter is normalized by $L(G)$ because $N$ is normal in $G$. This proves an even stronger result than commensuration, namely the existence of a finite index subgroup of $G$ which is normalized by $L(G)$.
\end{proof}

Now assume that $G \leq \autd$ is a closed regular branch group. Examples of such groups include the topological closure of any of the finitely generated regular branch groups mentioned earlier. In this context, the group $G$ comes equipped with a profinite topology, inherited from the profinite topology of $\autd$. The fact that $G$ is commensurated in $\Gaaut$ together with Lemma \ref{bourb} allows us to extend the topology of $G$ to the larger group $\Gaaut$:

\begin{prop}
Assume that $G \leq \autd$ is a closed regular branch group. Then there exists a (unique) group topology on $\Gaaut$ turning $G$ into a compact open subgroup. In particular $\Gaaut$ is a t.d.l.c.\ compactly generated group.
\end{prop}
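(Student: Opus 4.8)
The plan is to construct the topology exactly as was done earlier for $\Daaut$, by feeding Lemma \ref{bourb} the filtering family $\mathcal{F}$ of all open subgroups of $G$. Since $G$ is closed in the compact group $\autd$ it is itself compact, and $\mathcal{F}$ is a base of neighbourhoods of the identity for the profinite topology of $G$. That $\mathcal{F}$ is filtering is immediate, since the intersection of two finite index subgroups of $G$ is again of finite index, hence open.

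The one nontrivial hypothesis of Lemma \ref{bourb} is the conjugation condition, and this is where the previously established fact that $G$ is commensurated in $\Gaaut$ enters. Given $g \in \Gaaut$ and $U \in \mathcal{F}$, I would simply set $V = G \cap gUg^{-1}$. This is a subgroup of $G$ contained in $gUg^{-1}$, and its index in $G$ is at most $[G : G \cap gGg^{-1}]\,[G : U]$: the second factor is finite because $U$ is open in $G$, and the first is finite precisely by commensuration of $G$ in $\Gaaut$. Hence $V$ is an open subgroup of $G$, that is $V \in \mathcal{F}$, as required. I expect this to be the main (and essentially only) point of the argument.

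Lemma \ref{bourb} then yields a unique group topology on $\Gaaut$ having $\mathcal{F}$ as a base of neighbourhoods of the identity. For this topology $G$ is open, since it contains a member of $\mathcal{F}$, and the topology it induces on $G$ is its original profinite one; being closed in $\autd$, the subgroup $G$ is compact. Thus $G$ is a compact open subgroup, so $\Gaaut$ is locally compact, and it is totally disconnected because $G$ is. This establishes the t.d.l.c.\ assertion.

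Finally, for compact generation I would use the structural fact recalled above that $\Gaaut$ is generated by $V_d$ together with $L(G)$. The group $V_d$ is finitely generated by Theorem \ref{fp}, so it suffices to check that $L(G)$ is compact. Taking $K$ to be a closed branching subgroup of $G$, the branch property $\psi(K) \supseteq K \times \cdots \times K$ forces the rigid stabiliser $L(K)$ to lie in $G$; as $L(K)$ is the image of the compact group $K$ under the continuous map defining $L(G)$, it is compact, and since $[G:K]$ is finite, $L(G)$ is a finite union of translates of $L(K)$, hence compact. The union of $L(G)$ with a finite generating set of $V_d$ is then a compact generating set of $\Gaaut$, which finishes the proof.
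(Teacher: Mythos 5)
Your overall frame (Lemma \ref{bourb} applied to the family $\mathcal{F}$ of open subgroups of $G$) is the paper's, and your compact generation argument at the end is correct; but the step you yourself call the main point of the argument contains a genuine gap. From $[G : G \cap gUg^{-1}] < \infty$ you conclude that $V = G \cap gUg^{-1}$ is \emph{open} in $G$. Finite index does not imply open in a profinite group, and this cannot be repaired under the hypotheses of the proposition: $G$ is not assumed topologically finitely generated, so Nikolov--Segal is unavailable, and in fact the hypotheses admit counterexamples. For instance $W(D)$ is itself a closed regular branch group (it is closed, self-similar, and $\psi(W(D)) = W(D) \wr D$ contains $W(D) \times \cdots \times W(D)$ with index $|D|$), and for $D = \left\langle (1\,2) \right\rangle \leq \mathrm{Sym}(3)$ reading off the local permutations along the ray $3 3 3 \cdots$ gives a continuous surjection $W(D) \rightarrow (\mathbb{Z}/2)^{\mathbb{N}}$; pulling back a non-open index-two subgroup of $(\mathbb{Z}/2)^{\mathbb{N}}$ (the kernel of a discontinuous $\mathbb{F}_2$-linear functional) produces a finite-index non-open subgroup of $W(D)$. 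More conceptually, no proof that uses commensuration only as an abstract hypothesis can work: if $\alpha$ is a discontinuous abstract automorphism of a profinite group $H$, then $H$ is normal, hence commensurated, in $H \rtimes_{\alpha} \mathbb{Z}$, yet no group topology on $H \rtimes_{\alpha} \mathbb{Z}$ can make $H$ a compact open subgroup, since conjugation would then restrict to a homeomorphism of $H$.

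What is actually needed, and what the paper's proof of the commensuration proposition supplies, is the \emph{explicit form} of the commensuration rather than its abstract consequence. Replacing the branching subgroup $K$ by its closure, one may take $K$ closed, hence open (a \emph{closed} finite-index subgroup is open --- this is the legitimate version of your inference, and it applies to the subgroups $K_T$, which are closed of finite index in $G$). The $K_T$ form a basis of neighbourhoods of the identity in $G$, since every open subgroup contains a level stabilizer and hence some $K_T$. Now for $\sigma \in V_d$ and $U$ open, choose $T$ containing the domain tree of $\sigma$ with $K_T \subseteq U$; then $\sigma K_T \sigma^{-1} = K_{T'}$ is the required element of $\mathcal{F}$ inside $\sigma U \sigma^{-1}$. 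For $g = L(h) \in L(G)$, use that $L(G)$ normalizes the open subgroup $\psi^{-1}(N \times \cdots \times N)$ and acts on it by conjugation coordinatewise through conjugation by $h$ inside the topological group $G$, hence continuously; so $gUg^{-1}$ again contains a member of $\mathcal{F}$. Since $V_d$ and $L(G)$ generate $\Gaaut$ and the condition of Lemma \ref{bourb} is stable under products, this closes the gap. In short: the open subgroup $V \subseteq gUg^{-1}$ must be produced by the geometry of the conjugation, not by an index count.
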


We now prove some preliminary results which will be used in the proof of Theorem \ref{aaut-sch}. \newline

\emph{Until the end of this section, $H \leq \autd$ is a regular branch group, branching over a congruence subgroup, and $G$ is the topological closure of $H$ in $\autd$.}

\begin{prop} \label{prop-dense-subgroup}
$\Haaut$ is a dense subgroup of $\Gaaut$.
\end{prop}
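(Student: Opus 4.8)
The plan is to exploit the fact, recalled earlier in this section, that $\Gaaut$ is generated by the Higman-Thompson group $V_d$ together with the embedded copy $L(G)$ of $G$ acting on the subtree below the first vertex of level one. Since elements of $V_d$ are piecewise identity almost automorphisms and the identity belongs to $H$, we have $V_d \leq \Haaut$. As the closure $\overline{\Haaut}$ of $\Haaut$ in $\Gaaut$ is a closed subgroup, it will suffice to prove that $L(G) \subseteq \overline{\Haaut}$: then $\overline{\Haaut}$ contains both $V_d$ and $L(G)$, hence the whole subgroup $\langle V_d, L(G)\rangle = \Gaaut$, which is exactly the desired density.

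To obtain $L(G) \subseteq \overline{\Haaut}$ I would prove the stronger statement that $L(H)$ is dense in $L(G)$ for the topology of $\Gaaut$, noting that $L(H) \subseteq \Haaut$. Recall that a neighbourhood basis of the identity in $\Gaaut$ is furnished by the open subgroups of the compact open subgroup $G$, in particular by the level stabilizers $G_n$ of $G$. Fix $g \in G$ and a level $M$. Since $L$ is an injective homomorphism we have $L(g)^{-1}L(h) = L(g^{-1}h)$ for $h \in H$, so it is enough to produce $h \in H$ with $L(g^{-1}h) \in G_M$, for then the neighbourhood $L(g)G_M$ of $L(g)$ meets $L(H)$.

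Here the regular branch structure enters. Assume $G$ branches over a subgroup containing $G_s$, which is legitimate since $G$ inherits the regular branch and congruence properties of $H$ by the stability remark. Set $N = \max(M-1,s)$. As $H$ is dense in $G$ and $G_N$ is open in $G$, we may pick $h \in H$ with $\gamma := g^{-1}h \in G_N$. The element $L(\gamma)$ fixes the first level of $\treed$ and its tuple of sections at the level-one vertices is $(\gamma,\mathrm{id},\ldots,\mathrm{id})$, each entry lying in $G_N$; since $N \geq s$, Lemma \ref{lem-regbr} identifies $G_{N+1}$ with $G_N \times \cdots \times G_N$, so this tuple is realized by an element of $G_{N+1} \leq G$. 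Therefore $L(\gamma) \in G_{N+1} \subseteq G_M$ (because $N+1 \geq M$), which gives $L(h) \in L(g)G_M$. Hence every neighbourhood of $L(g)$ meets $L(H)$, so $L(g) \in \overline{L(H)} \subseteq \overline{\Haaut}$, and the argument is complete.

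The main obstacle is precisely the step asserting $L(\gamma) \in G$: an element of $L(G)$ is a tree automorphism supported on a single level-one subtree, and there is no reason in general for such an automorphism to lie in $G$. It is exactly the branching over a congruence subgroup, through the factorization $G_{N+1} \cong G_N \times \cdots \times G_N$ of Lemma \ref{lem-regbr}, that guarantees that once $\gamma$ fixes sufficiently many levels the one-coordinate automorphism $L(\gamma)$ is recovered inside a level stabilizer of $G$. Absent this structure, $L(\gamma)$ could fail to belong to $G$ and would then be bounded away from the identity in $\Gaaut$, so no approximation of $L(g)$ by elements of $L(H)$ would be available.
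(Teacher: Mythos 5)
Your proof is correct, but it takes a genuinely different route from the paper's. You reduce density to the generation statement $\Gaaut = \langle V_d, L(G)\rangle$ together with the observation $V_d \leq \Haaut$, so that everything rests on showing that $L(H)$ is dense in $L(G)$; the branch hypothesis then enters exactly once, through Lemma \ref{lem-regbr}, which (applied to $G$, legitimately, by the stability remark) gives $\psi(G_{N+1}) = G_N \times \cdots \times G_N$ for $N \geq s$ and hence $L(G_N) \leq G_{N+1} \leq G$ --- as you rightly stress, this is the only reason $L(\gamma)$ lies in $G$ at all. The paper instead argues directly on an arbitrary $\gamma \in \Gaaut$: writing $\gamma$ as a triple $(\psi,T,T')$ whose pieces $g_v$ lie in $G$, it replaces each $g_v$ by an element $h_v \in H$ in the same left coset of $K = \overline{L}$ (where $L$ is a branching congruence subgroup of $H$), producing $\hat{\gamma} \in \Haaut$ with $\hat{\gamma}^{-1}\gamma \in K_T$; there the branch structure is used to know that the subgroups $K_T$ (identity on $T$, elements of $K$ below each leaf) lie in $G$ and form a neighbourhood basis of the identity as $T$ grows. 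The trade-off is clear: the paper's argument is self-contained and yields an explicit approximation of any almost automorphism at any prescribed resolution $T$, whereas yours is shorter and isolates the branch hypothesis in a single clean algebraic step ($L(G_N) \leq G_{N+1}$), at the cost of leaning on the generation fact $\Gaaut = \langle V_d, L(G)\rangle$, which the paper only quotes from Nekrashevych (Lemma 5.12 of \cite{Nek-fp}).
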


\begin{proof}
We let $L$ be a congruence subgroup of $G$ over which $G$ is branching, and we denote by $K$ the closure of $L$ in $\autd$. For every finite rooted complete subtree $T$ of $\treed$, we still denote by $K_T$ the subgroup of $\autd$ fixing pointwise $T$ and acting by an element of $K$ on each subtree hanging below a leaf of $T$. Note that since $L$ contains some level stabilizer of $H$, the subgroup $K$ contains some level stabilizer of $G$ and is therefore an open subgroup of $G$. It follows that $(K_T)$ forms a basis of neighbourhoods of the identity in $G$, when $T$ ranges over all finite rooted complete subtrees. By definition of the topology, it is also a basis of neighbourhoods of the identity in $\Gaaut$.

Let $g$ be an element of $G$. By definition there exists a sequence $(h_n)$ of elements of $H$ converging to $g$. Since $L$ has finite index in $H$, we may assume that all the elements $h_n$ lie in the same left coset of $L$, that is, that there exists $h \in H$ such that $h_n \in h L$ for every $n$. From this we deduce that $g \in h K$.

Now let $\gamma$ be an element of $\Gaaut$. We will prove that $\Haaut$ intersects every neighbourhood of $\gamma$. Let $(\psi,T,T')$ be a triple representing $\gamma$ such that $\psi: \treed \setminus T \rightarrow  \treed \setminus T'$ belongs to $G$ on each connected component of $\treed \setminus T$. This means that for every leaf $v$ of $T$, there exists an element $g_v \in G$ so that $\psi$ sends the subtree hanging below the leaf $v$ to a subtree hanging below a leaf of $T'$ via the element $g_v$. According to the above remark, there exists some element $h_v \in H$ such that $h_v^{-1}g_v \in K$. Now let us consider the almost automorphism $\hat{\gamma}$ represented by the triple $(\hat{\psi},T,T')$, where $\hat{\psi}$ induces the same bijection between the leaves of $T$ and the leaves of $T'$, but does not act on the subtree hanging below the leaf $v$ by $g_v$ but by the element $h_v$. By construction, we have $\hat{\gamma} \in \Haaut$ and $\hat{\gamma}^{-1} \gamma \in K_T$. Since on the one hand we can choose $T$ to be as large as we want, and on the other hand $(K_T)$ is a basis of neighbourhoods of the identity, we obtain that $\Haaut$ intersects every neighbourhood of $\gamma$.
\end{proof}


\begin{prop} \label{prop-G-inter}
The intersection in $\Gaaut$ between $G$ and $\Haaut$ is equal to $H$.
\end{prop}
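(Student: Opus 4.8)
The inclusion $H \subseteq G \cap \Haaut$ is immediate: one has $H \subseteq G$ since $G = \overline{H}$, and every element of $H$ is in particular an almost automorphism piecewise of type $H$, so $H \subseteq \Haaut$. The content is therefore the reverse inclusion, and the plan is to reduce it to Proposition \ref{prop-intclos}, which identifies $G \cap (H \wr \mathrm{Sym}(d))$ with $H$ inside $\autd$.

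So let $\gamma \in G \cap \Haaut$. Since $\gamma \in G \subseteq \autd$ it is a genuine tree automorphism. First I would record that $\gamma$ admits a finite rooted complete subtree $T$ all of whose leaf-sections lie in $H$: indeed, $\gamma \in \Haaut$ provides a triple $(\psi,T_0,T_0')$, piecewise of type $H$, representing the same boundary homeomorphism as the automorphism $\gamma$; comparing this triple with $(\gamma|_{\treed \setminus T_0},T_0,\gamma(T_0))$ through the characterization of equivalent triples, there is a finite rooted complete subtree $T \supseteq T_0$ on which $\gamma$ agrees with $\psi$, so that $\gamma|_{\treed \setminus T}$ is piecewise of type $H$. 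For every leaf $v$ of $T$ this says exactly that the section $\gamma_v$ belongs to $H$.

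Now I would argue $\gamma \in H$ by induction on the number of carets $\kappa(T)$. If $\kappa(T) = 0$, then $T$ is the minimal complete rooted subtree and its leaves are the $d$ vertices of level one; the hypothesis says that all level-one sections of $\gamma$ lie in $H$, i.e. $\gamma \in H \wr \mathrm{Sym}(d)$, and since also $\gamma \in G$, Proposition \ref{prop-intclos} gives $\gamma \in H$. For the inductive step, recall that $G$, being the closure of the self-similar group $H$, is itself self-similar, so each level-one section $\gamma_{x_i}$ lies in $G$. Setting $T_i = T \cap \treed^{x_i}$, viewed as a finite rooted complete subtree of $\treed^{x_i} \cong \treed$, one has $\kappa(T_i) < \kappa(T)$ (each internal level-one vertex of $T$ is a caret of $T$ but plays the role of the root of the corresponding $T_i$), and the leaf-sections of $\gamma_{x_i}$ relative to $T_i$ are among the leaf-sections of $\gamma$, hence lie in $H$. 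By the induction hypothesis each $\gamma_{x_i} \in H$, so again $\gamma \in H \wr \mathrm{Sym}(d)$, and Proposition \ref{prop-intclos} yields $\gamma \in H$.

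The routine inclusion and the wreath-product bookkeeping are harmless; the two points requiring care are the extraction of the subtree $T$ with all leaf-sections in $H$ (which is where one must exploit that $\gamma$ is simultaneously a genuine automorphism and an element of $\Haaut$) and the observation that the closure $G$ remains self-similar, allowing one to descend to sections. The engine of the whole argument is Proposition \ref{prop-intclos}, applied once at each level of the induction to promote ``all first-level sections in $H$, and element of $G$'' to ``element of $H$''.
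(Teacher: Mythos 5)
Your proof is correct and takes essentially the same route as the paper's: both arguments reduce everything to Proposition \ref{prop-intclos}, applied once at the base case and once per inductive step, using self-similarity of the closure $G$ to descend to level-one sections. The only cosmetic difference is the induction parameter — you induct on the number of carets of a general finite rooted complete subtree, while the paper first writes $\autd \cap \Haaut = \bigcup_{n \geq 0} H \wr \mathrm{Aut}_n$ and inducts on the level $n$; these are interchangeable, and there is no gap.
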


\begin{proof}
The inclusion $H \subset G \cap \Haaut$ being clear, we only have to prove the reverse inclusion. 
First note that the intersection between $\autd$ and $\Haaut$ is the increasing union for $n \geq 0$ of the subgroups $H \wr \mathrm{Aut}_n$, where $\mathrm{Aut}_n$ is the subgroup of $\autd$ consisting of elements whose sections of level $n$ are trivial; and the permutational wreath product is associated to the action of $\mathrm{Aut}_n$ on the vertices of level $n$. In particular \[ G \cap \Haaut = \bigcup_{n \geq 0} G \cap (H \wr \mathrm{Aut}_n). \] Let us prove by induction on $n \geq 0$ that $G \cap (H \wr \mathrm{Aut}_n)$ is reduced to $H$. This is true for $n=0$ by definition, and true for $n=1$ according to Proposition \ref{prop-intclos}. Assume that this is true for some $n \geq 1$, and let $\gamma \in G \cap (H \wr \mathrm{Aut}_{n+1})$. Then every section of level $1$ of $\gamma$ lies in $H \cap (H \wr \mathrm{Aut}_n)$, which is reduced to $H$ by induction hypotheses. Therefore $\gamma \in G \cap (H \wr \mathrm{Sym}(d))$, which is also equal to $H$ by Proposition \ref{prop-intclos}. So we have proved the induction step, namely $G \cap (H \wr \mathrm{Aut}_{n+1}) = G$, and consequently $G \cap \Haaut = H$.
\end{proof}

We are now ready to prove the main result of this section.

\begin{proof}[Proof of Theorem \ref{aaut-sch}]
The group $\Gaaut$ admits $\Haaut$ as a dense subgroup by Proposition \ref{prop-dense-subgroup}, and the latter intersects the compact open subgroup $G$ along $H$ according to Proposition \ref{prop-G-inter}. Moreover Remark \ref{rmq-cor-normal} prevents $G$ from containing any non-trivial normal subgroup of $\Gaaut$, so the conclusion follows from Proposition \ref{lem-sw}.
\end{proof}

\subsection{Proof of Theorem \ref{thm-cp-gaaut}}

We conclude by proving Theorem \ref{thm-cp-gaaut}. The only missing argument is a recent result of Nekrashevych, generalizing the previous example of R\"{o}ver \cite{Rov-cfpsg}.

\begin{thm} [\cite{Nek-fp}, Theorem 5.9] \label{thm-nekfp} \enskip \newline
If $H \leq \autd$ is a finitely generated, contracting self-similar group, then $\Haaut$ is finitely presented.
\end{thm}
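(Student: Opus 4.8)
The plan is to exhibit an explicit finite presentation of $\Haaut$ and to prove its completeness by a rewriting argument in which the contracting hypothesis is the decisive ingredient. Recall from the discussion preceding the statement that $\Haaut$ is generated by the Higman-Thompson group $V_d$ together with the embedded copy $L(H)$ of $H$; since $H$ is finitely generated, so is $\Haaut$. The first difficulty to keep in mind is that $H$ itself need \emph{not} be finitely presented (the Grigorchuk group is not), so one cannot simply adjoin a presentation of $H$: the infinitely many relations of $H$ must be shown to be consequences of finitely many relations inside the larger group $\Haaut$.

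First I would fix a convenient generating set. Let $\mathcal{N} \leq H$ be a nucleus for the contracting action; after enlarging it I may assume that $\mathcal{N}$ is finite, symmetric, contains a generating set of $H$, and is closed under taking sections, so that the wreath recursion restricts to maps $h \mapsto \sigma_h (h_1, \dots, h_d)$ with $\sigma_h \in \mathrm{Sym}(d)$ and each section $h_x \in \mathcal{N}$. Using that $V_d$ acts transitively on the proper balls of $\bordd$, each $h \in \mathcal{N}$ and each vertex $v$ of $\treed$ give rise to an element of $\Haaut$ acting as $h$ on $\treed^v$ and trivially elsewhere, all of these being $V_d$-conjugates of elements of $L(H)$. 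I would take as generators a finite generating set of $V_d$ together with $\mathcal{N}$ (realised on a fixed subtree).

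Next I would write down the finite set of relations: \textbf{(i)} a finite presentation of $V_d$, which exists by Higman's Theorem~\ref{fp}; \textbf{(ii)} for every $h \in \mathcal{N}$, a \emph{recursion relation} expressing the element acting as $h$ on a subtree $\treed^v$ as the permutation $\sigma_h$ of the descendants of $v$ (realised in $V_d$) times the elements acting as the sections $h_x \in \mathcal{N}$ on the subtrees $\treed^{vx}$; since $\mathcal{N}$ is closed under sections the $h_x$ are again generators, so these relations are finite in number; \textbf{(iii)} for every pair $h,h' \in \mathcal{N}$, a \emph{multiplication relation} obtained by decomposing the product $hh'$ down to a uniform depth $k$ — provided by the contracting hypothesis applied to the finitely many such products — at which all sections of $hh'$ lie in $\mathcal{N}$, thereby expressing $hh'$ as a product of a $V_d$-element and finitely many nucleus generators; and \textbf{(iv)} \emph{commutation relations} stating that nucleus generators on disjoint subtrees commute, together with the finitely many relations describing how the $V_d$-generators conjugate the elements of $\mathcal{N}$. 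Each relator has bounded length, so the presentation is finite.

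The heart of the proof is completeness, i.e.\ that these relators normally generate all relations, and this is where I expect the main obstacle to lie. The strategy is to define a normal form for words — a reduced tree-pair diagram of $V_d$-type whose leaves are decorated by elements of $\mathcal{N}$ — and to show, using relations (i)--(iv), that every word can be rewritten into such a form. The contracting hypothesis is exactly what makes this terminate with finite data: repeatedly applying the recursion relations (ii) and (iii) pushes each $H$-decoration further down the tree, and after finitely many steps \emph{all} resulting sections lie in the fixed finite set $\mathcal{N}$, so no new generators or relations are ever required; confluence then reduces to the known confluence for $V_d$ together with the compatibility encoded in (ii)--(iv). A word representing the identity has trivial normal form, and unwinding the rewriting exhibits it as a product of conjugates of the listed relators; this is precisely the point at which an \emph{a priori} unbounded relation of $H$, occurring deep in the tree, is reduced via the self-similar recursion and contraction to the finitely many nucleus-level relations. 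An alternative, more conceptual route would have $\Haaut$ act on the Stein--Brown style complex of expansions and apply Brown's finiteness criterion; but there the cell stabilisers are products of copies of $H$ and hence not finitely presented, so the contracting property would again have to be invoked to replace these stabilisers by finite local data — confirming that, by either route, digesting the non-finite-presentability of $H$ is the genuine difficulty.
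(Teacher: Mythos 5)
A preliminary remark on the comparison itself: the paper does not prove this statement — it is quoted as a black box from Nekrashevych \cite{Nek-fp}, where it is used as the final ingredient in the proof of Theorem \ref{thm-cp-gaaut} — so the only meaningful benchmark is Nekrashevych's own argument. Your outline is in the right spirit and close to it: generators $V_d \cup \mathcal{N}$ with a symmetric, section-closed nucleus containing a generating set, Higman's finite presentation of $V_d$, wreath-recursion relations for nucleus letters, uniform-depth multiplication relations for pairs, and support/conjugation relations. You also correctly diagnose why the two naive routes fail: one cannot adjoin a presentation of $H$ (it need not be finitely presented), and Brown's criterion stalls because cell stabilizers involve products of copies of $H$.

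There is, however, a genuine gap at exactly the step you call the heart of the proof. The contracting hypothesis is a statement about group \emph{elements} (deep sections of a fixed element lie in $\mathcal{N}$), whereas completeness requires a statement about \emph{words}: given a word $w = h_1 \cdots h_n$ in nucleus letters representing $1$ in $H$, you must derive it from the finite relator set by a terminating procedure. Your stated mechanism — ``repeatedly applying the recursion relations pushes each $H$-decoration further down the tree, and after finitely many steps all resulting sections lie in $\mathcal{N}$'' — does not terminate: expanding each letter of $w$ via (ii) and shuffling out the finitary parts with (i) replaces $w$ by $d$ section words \emph{each of length up to $n$}, one per level-one subtree, and iterating this multiplies the number of words without ever shortening any of them; contraction of individual elements never kicks in. The correct engine is your relation family (iii), used in an ordered way: collapse adjacent letters pairwise over each depth-$k$ subtree, so that the number of nucleus letters per subtree strictly decreases at each stage (at the price of descending $k$ further levels and expanding the remaining letters via (ii) to realign depths), until a single nucleus letter sits at each leaf; triviality then splits into a $V_d$-relation for the finitary part, disposed of by (i), and the degenerate relations identifying the identity letter with the empty word. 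Setting up this induction on a genuinely decreasing complexity — and verifying that conjugation of nucleus letters across \emph{arbitrary} $V_d$-elements (not just the chosen generators, and including those whose diagrams cut through the support of the letter) is derivable from (ii) and (iv) — is the actual multi-page content of Nekrashevych's proof. Likewise, your claim that confluence ``reduces to the known confluence for $V_d$'' understates the problem: two decorated tree-pair diagrams can represent the same almost automorphism for reasons that mix the $V_d$-part with the decorations, and checking that all such coincidences follow from (i)--(iv) is precisely what has to be proved. In short: your generating set and relator families are the right ones, but the termination and confluence of the rewriting are asserted where they need to be proved, and the specific termination mechanism you propose fails as stated.
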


\begin{proof}[Proof of Theorem \ref{thm-cp-gaaut}]
Let $H$ be a finitely generated, contracting regular branch group, branching over a congruence subgroup, having $G$ for topological closure in $\autd$. Then by Theorem \ref{aaut-sch} $\Gaaut$ is isomorphic to the \sch completion $\Haaut \rpc H$. Now according to Theorem \ref{thm-nekfp} the group $\Haaut$ is finitely presented, and $H$ is finitely generated by assumption, so the conclusion follows from Theorem \ref{thm-sch}.
\end{proof}

We make a brief comment on the fact that any group $G$ appearing in Theorem \ref{thm-cp-gaaut} can be explicitly described in terms of the group of which it is the topological closure. Indeed, if $H$ is a finitely generated, contracting regular branch group, branching over a subgroup containing $H_s$, having $G$ for topological closure in $\autd$; then Proposition \ref{prop-zs} yields that elements of $G$ are exactly the automorphisms having all their sections acting like an element of $H$ up to level $s+1$. One can rephrase this in terms of patterns and finitely constrained groups (see \cite{ZS-haus}), by saying that $G$ is the finitely constrained group defined by allowing all the patterns of size $s+1$ appearing in $H$.

\nocite{*}
\bibliographystyle{amsalpha}
\bibliography{aaut}

\end{document}